\documentclass[11pt,reqno]{amsart}
\usepackage[margin=1in]{geometry}

\newcommand{\fr}{\mathfrak}

\newcommand{\bb}{\mathbb}

\newcommand{\RR}{\mathbb{R}}
\newcommand{\CC}{\mathbb{C}}

\newcommand{\End}{\mathrm{End}}
\newcommand{\Aut}{\mathrm{Aut}}
\newcommand{\an}{\mathrm{an}}
\newcommand{\alg}{\mathrm{alg}}

\newcommand{\loc}{\mathrm{loc}}
\newcommand{\vol}{\mathrm{vol}}
\newcommand{\HYM}{\mathrm{HYM}}
\newcommand{\Gr}{\mathrm{Gr}}
\newcommand{\hns}{\mathrm{hns}}
\newcommand{\hn}{\mathrm{hn}}

\newcommand{\rank}{\mathrm{rank}}

\newtheorem*{thm*}{Theorem}

\newtheorem{thm}{Theorem}[section]

\newtheorem{prop}[thm]{Proposition}
\newtheorem{lem}[thm]{Lemma}

\newtheorem{cor}[thm]{Corollary}

\theoremstyle{definition}
\newtheorem{defn}[thm]{Definition}

\newtheorem{rem}[thm]{Remark}

\usepackage{graphicx}
\usepackage{amssymb}
\usepackage{epstopdf}
\usepackage{enumerate}
\usepackage{hyperref}

\DeclareGraphicsRule{.tif}{png}{.png}{`convert #1 `dirname #1`/`basename #1 .tif`.png}

\numberwithin{equation}{section}

\title{Limiting behavior of Donaldson's heat flow on non-K\"{a}hler surfaces}

\author{Jacob McNamara}
\address{Department of Mathematics, Harvard University, Cambridge, MA 02138}
\email{jmcnamara@college.harvard.edu}

\author{Yifei Zhao}
\address{Department of Mathematics, Columbia University, New York, NY 10027}
\email{yz2427@columbia.edu}
\thanks{Both authors are supported in part by NSF grants DMS-07-57372 and DMS-12-66033}

\date{March 24, 2014}

\begin{document}
\maketitle

\begin{abstract}
Let $X$ be a compact Hermitian surface, and $g$ be any fixed Gauduchon metric on $X$. Let $E$ be an Hermitian holomorphic vector bundle over $X$. On the bundle $E$, Donaldson's heat flow is gauge equivalent to a flow of holomorphic structures. We prove that this flow converges, in the sense of Uhlenbeck, to the double dual of the graded sheaf associated to the $g$-Harder-Narasimhan-Seshadri filtration of $X$. This result generalizes a convergence theorem of Daskalopoulos and Wentworth to non-K\"{a}hler setting.
\end{abstract}

\setcounter{section}{-1}
\section{Introduction}
The interplay between existence of canonical metrics and algebraic stability conditions has become one of the major topics in complex differential geometry. Given an indecomposable holomorphic vector bundle $(E,\bar{\partial})$ over a compact K\"{a}hler manifold $(X,g)$, it is known that $E$ admits an Hermitian-Einstein metric if and only if $E$ is stable in the sense of Mumford-Takemoto. This result was proved for complex curves by Narasimhan and Seshadri \cite{Nar}, and then in full generality by Donaldson \cite{Don} and Uhlenbeck-Yau \cite{Uhl-Yau}, where it is now known as the Donaldson-Uhlenbeck-Yau theorem. In particular, Uhlenbeck and Yau showed that the existence of an Hermitian-Einstein metric is equivalent to a $C^0$-estimate, whose violation would produce a destabilizing subsheaf, while Donaldson constructed a non-linear flow of self-adjoint endomorphisms of the bundle which would converge to the Hermitian-Einstein metric if the bundle were Mumford-Takemoto stable.

This stability condition under consideration is defined using the degree/rank ratio of a coherent sheaf $E$. When $X$ is non-K\"{a}hler, the notion of degree can be defined by fixing an Hermitian metric $g$ on $X$ whose associated $(1,1)$-form $\omega$ satisfies $\partial\bar{\partial}(\omega^{n-1})=0$. Such a metric is called a \emph{Gauduchon metric}, and it exists in the conformal class of every Hermitian metric (cf. \cite{Gau}). By a theorem of L\"{u}bke and Teleman \cite{Lub}, the stability condition corresponding to a Gauduchon metric is still equivalent to the existence of Hermitian-Einstein metrics. However, the heat flow solution in this generality is only recently given by Jacob \cite{Jac}.

In the case where $E$ is not assumed to be stable, one can still ask about the limiting behavior of Donaldson's heat flow. Indeed, when $X$ is K\"{a}hler, Donaldson's heat flow is gauge equivalent to the Yang-Mills flow, which is a flow of integrable, unitary connections on $E$. In \cite{Das} and \cite{Das-Wen}, Daskalopoulos and Wentworth discovered a relation between the limit of the Yang-Mills flow and the Harder-Narasimhan filtration, first over Riemann surfaces and then over compact K\"{a}hler surfaces. In the dimension 2 case, the convergence has to take into account ``bubbling phenomena." More precisely, along the Yang-Mills flow $D_t$ with $D_0''=\bar{\partial}$, we have uniform control over the functionals $\|F_{D_t}\|_{L^2}$ and $\|\Lambda F_{D_t}\|_{L^{\infty}}$ (where $\Lambda$ is the adjoint of the Lefschetz operator $L=\omega\wedge$), which implies certain weak subsequential convergence of $D_t$ away from a finite subset of $X$. The main result of \cite{Das} is that such a limit can be identified, away from a finite subset of $X$, with the reflexified graded object $\Gr^{\hns}(E,\bar{\partial})^{**}$ of the Harder-Narasimhan-Seshadri filtration of $(E,\bar{\partial})$.

Inspired in part by Jacob's work \cite{Jac}, we speculated that Donaldson's heat flow should exhibit similar behavior over non-K\"{a}hler manifolds. The first instances of compact non-K\"{a}hler manifolds occur in dimension 2. Here, Donaldson's heat flow is no longer gauge equivalent to the Yang-Mills flow, but rather to the following flow of integrable, unitary connections:
\begin{equation}
\label{eq-uym}
\frac{d}{dt}A_t=\frac{i}{2}\left(D''_t\Lambda F_{D_t}-D'_t\Lambda F_{D_t}\right),\quad\forall t\ge 0
\end{equation}
In this setting, we generalize the convergence result of \cite{Das-Wen}:
\begin{thm*}[main theorem]
Let $(E,\bar{\partial})$ be an Hermitian holomorphic vector bundle over a compact Gauduchon surface $(X,g)$. Let $D_t$ be the solution to \eqref{eq-uym} with initial condition $D_0''=\bar{\partial}$. Given any sequence $t_j\rightarrow\infty$, there exists a connection $D_{\infty}$ on an Hermitian vector bundle $E_{\infty}$, and a finite set $Z^{\an}\subset X$ such that
\begin{enumerate}[(i)]
	\item $(E_{\infty},D_{\infty}'')$ is holomorphically isomorphic to $\Gr^{\hns}(E,D_0'')^{**}$;
	
	\item $E$ and $E_{\infty}$ are identified outside $Z^{\an}$ via $W^{2,p}_{\loc}$-isometries for all $p$;
	
	\item Via the isometries in (ii), and after passing to a subsequence, $D_j\rightarrow D_{\infty}$ in $L^2_{\loc}$ away from $Z^{\an}$.
\end{enumerate}
\end{thm*}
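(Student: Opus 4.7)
The plan is to adapt the framework of Daskalopoulos-Wentworth \cite{Das-Wen} to the Gauduchon setting, proceeding in three main steps: (a) derive uniform a priori estimates for $\Lambda F_{D_t}$ and $F_{D_t}$ along \eqref{eq-uym}; (b) apply Uhlenbeck compactness to extract a subsequential limit $(E_\infty, D_\infty)$ on the complement of a finite bubbling set $Z^{\an}\subset X$; (c) identify $(E_\infty, D_\infty'')$, after extension of the limiting holomorphic structure across $Z^{\an}$, with $\Gr^{\hns}(E,D_0'')^{**}$.

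For step (a), since $\Lambda F_{D_t}$ satisfies a parabolic equation of heat type along \eqref{eq-uym}, the maximum principle gives a uniform bound $\|\Lambda F_{D_t}\|_{L^\infty}\le C$. The Chern-Weil identity for $\int_X\mathrm{tr}(F_{D_t}\wedge F_{D_t})$, combined with the Gauduchon condition $\partial\bar\partial\omega=0$, which forces the naturally occurring torsion corrections to integrate to zero, converts this into a uniform $L^2$ bound on $F_{D_t}$. For step (b), Uhlenbeck's $\varepsilon$-regularity theorem then restricts curvature concentration to finitely many points, defining $Z^{\an}$. On $X\setminus Z^{\an}$ one constructs local Coulomb gauges in which a subsequence of $D_{t_j}$ converges weakly in $W^{2,p}_{\loc}$ to a connection $D_\infty$ on a limit bundle $E_\infty$; these local gauge transformations can be patched to yield the global $W^{2,p}_{\loc}$-isometries required in (ii), and the $L^2_{\loc}$ convergence of (iii) follows from the stronger $W^{2,p}_{\loc}$-weak convergence by compactness of Sobolev embeddings.

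Step (c) is the main content of the theorem. The operator $D_\infty''$ defines a holomorphic structure on $E_\infty|_{X\setminus Z^{\an}}$, whose kernel sheaves extend uniquely to reflexive sheaves on $X$; since $\dim_\CC X=2$, reflexivity is equivalent to local freeness, producing the holomorphic bundle $(E_\infty,D_\infty'')$ globally. To identify it with $\Gr^{\hns}(E,D_0'')^{**}$, one develops a Gauduchon analogue of the Atiyah-Bott formula, computing the infimum of the Donaldson functional $\int_X|\Lambda F - \lambda\,\mathrm{Id}|^2\,\omega^2$ along the complex gauge orbit of $(E,D_0'')$ in terms of the Gauduchon slopes of the HNS factors. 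The flow \eqref{eq-uym} is the downward gradient flow of this functional; one shows that its infimum is attained in the limit, decomposes $(E_\infty,D_\infty)$ into eigenspaces of $\Lambda F_{D_\infty}$, and invokes Jacob's Donaldson-Uhlenbeck-Yau theorem \cite{Jac} on each piece to identify it as a polystable bundle whose Gauduchon slope matches the corresponding HNS factor.

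The principal obstacle is this Gauduchon Atiyah-Bott formula together with control of how Harder-Narasimhan types evolve along \eqref{eq-uym}. In the K\"ahler case these arguments rely on integration by parts that on a non-K\"ahler surface acquires torsion corrections; ensuring that all such corrections are controlled by $\partial\bar\partial\omega=0$ rather than by the stronger $d\omega=0$ is the key technical point. A secondary difficulty is to show that the Harder-Narasimhan filtration of the limiting sheaf, initially defined only on $X\setminus Z^{\an}$, extends without jumping to a filtration by reflexive subsheaves on all of $X$ that agrees, up to reflexification, with the HNS filtration of $(E,D_0'')$.
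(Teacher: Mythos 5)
Your overall architecture (a priori estimates $\to$ Uhlenbeck limit away from a finite set $\to$ identification via minimization of the Hermitian--Yang--Mills functional over the complex gauge orbit) matches the paper's, but two essential mechanisms are missing. First, to conclude anything about the limit you need a subsequence $t_j$ along which $\|D_{t_j}\Lambda F_{D_{t_j}}\|_{L^2}\to 0$; only then does the Uhlenbeck limit satisfy $D_\infty\Lambda F_{D_\infty}=0$ weakly, whence (after regularity and removal of singularities) $i\Lambda F_{D_\infty}$ has constant eigenvalues and $E_\infty$ splits into Hermitian--Einstein pieces. Your Chern--Weil argument gives only a uniform $L^2$ bound on $F_{D_t}$, not this dissipation. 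The paper's key new estimate is the identity $\frac{d}{dt}\|F_{D_t}\|_{L^2}^2=-\|D_t\Lambda F_{D_t}\|_{L^2}^2$ (Prop.~\ref{prop-bound-f}), whose proof combines a surface-specific formula $(\tau+\bar\tau)^*F=-\Lambda F(d^*\omega)$ (Lem.~\ref{lem-surface-torsion}) with the Gauduchon orthogonality \eqref{eq-gauduchon-perpendicularity}; integrating it in $t$ produces the minimizing sequence. Your assertion that \eqref{eq-uym} is the downward gradient flow of $\int_X|\Lambda F-\lambda\,\mathbb{I}|^2$ would serve the same purpose, but verifying it on a Gauduchon surface \emph{is} precisely this torsion computation, which you defer rather than perform. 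Relatedly, the theorem is asserted for an \emph{arbitrary} sequence $t_j\to\infty$: the paper handles this by first showing $\HYM(D_t)$ is monotone and converges to $\HYM(\vec\mu_0)$ for the full flow, and then invoking a flow-independent convergence theorem (Lem.~\ref{lem-dw-3}) for any sequence of complex-gauge-equivalent connections with $\HYM(D_j)\to\HYM(\vec\mu_0)$; your proposal only produces a limit along the special subsequence where the gradient vanishes.

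Second, your step (c) does not actually prove (i). Decomposing $E_\infty$ into eigenbundles of $i\Lambda F_{D_\infty}$ and applying the Gauduchon Donaldson--Uhlenbeck--Yau theorem identifies each summand as a polystable bundle of prescribed slope, i.e.\ it pins down the Harder--Narasimhan type of the limit -- but two stable bundles of equal slope need not be isomorphic, so this does not identify $(E_\infty,D_\infty'')$ with $\Gr^{\hns}(E,D_0'')^{**}$. The missing ingredient (carried out in \cite[Thm.~5.1]{Das-Wen}, quoted here as Lem.~\ref{lem-dw-3}) is the construction of nonzero holomorphic maps from the HNS quotients of $(E,D_0'')$ into the limit, obtained from $L^p\cap W^{1,2}_{\loc}$ convergence of the Harder--Narasimhan projections $\pi_j^{(i)}\to\pi_\infty^{(i)}$ (Lem.~\ref{lem-dw-2}), followed by a stability argument showing these maps are isomorphisms onto the stable summands. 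You also still need the ``Gauduchon Atiyah--Bott'' lower-bound statement $\inf\HYM=\HYM(\vec\mu_0)$ to be attained; the paper proves this via approximate critical Hermitian structures built from resolutions of the HNS filtration and the L\"ubke--Teleman existence theorem for stable bundles (Prop.~\ref{prop-hermitian-metric}), a construction your proposal names as the principal obstacle but does not supply. Finally, a small technical point: in Coulomb gauge the $L^p$ curvature bound yields weak $W^{1,p}_{\loc}$ convergence of the connections (with $W^{2,p}$ gauge changes), not weak $W^{2,p}_{\loc}$ convergence as you state.
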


\noindent
The proof of this theorem is largely built upon Daskalopoulos and Wentworth's argument in \cite{Das-Wen}. More precisely, they proved that
\begin{enumerate}[(i)]
	\item Along the Yang-Mills flow on a compact K\"{a}hler surface, the Hermitian Yang-Mills functional $\HYM(D_t):=\|\Lambda F_{D_t}\|_{L^2}$ converges to its absolute lower bound $\HYM(\vec{\mu}_0):=2\pi\sum_{i}(\vec\mu_0)_i^2$, where $\vec{\mu}_0$ is the Harder-Narasimhan type of $(E,D''_0)$\footnote{Here and throughout the paper, we assume the normalization $\vol(X)=2\pi$. This normalization has the advantage that the Hermitian-Einstein constant of a vector bundle agrees with its slope, i.e. if $i\Lambda F_D=\mu(E)\mathbb{I}_E$, then $\mu(E)=\deg(E)/\rank(E)$.} (\cite[Prop. 2.26 \& Thm. 4.1]{Das-Wen}). This step requires showing that $\lim_{t\rightarrow\infty}\HYM(D_t)=\HYM(D_{\infty})$ for some Uhlenbeck limit $(E_{\infty},D_{\infty})$, and then identifying the Harder-Narasimhan type of $(E_{\infty},D_{\infty}'')$ with that of $(E,D_0'')$;
	\item Independently of the flow equation, the conclusion of the main theorem holds for any sequence of integrable connections $D_j$ that are equivalent to $D_0$ by complex gauge transformations, and satisfying $\HYM(D_j)\rightarrow\HYM(\vec{\mu}_0)$ (\cite[Thm. 5.1]{Das-Wen}).
\end{enumerate}
Part (ii) can be directly adapted to our case, but for part (i), new estimates have to be made for the flow equation \eqref{eq-uym}.

In \S1, we introduce the flow equation \eqref{eq-uym}, and present the crucial estimate for $\|F_{D_t}\|_{L^2}$ (Prop. \ref{prop-bound-f}). This estimate not only relies on the fact that $g$ is a Gauduchon metric, but also exploits the dimension of $X$ (Lem. \ref{lem-surface-torsion}); these techniques are not featured in the K\"{a}hler case. Moreover, many estimates in the K\"{a}hler case are proved using the maximum principle for the Hodge Laplacian $\Box=d^*d+dd^*$; in the Gauduchon case, we often find it more efficient to use the operator $P=i\Lambda\bar{\partial}\partial$ introduced in \cite{Lub}. From the estimate on $\|F_{D_t}\|_{L^2}$, it also follows that $\|D_{t_j}\Lambda F_{D_{t_j}}\|_{L^2}\rightarrow 0$ along some sequence $t_j\rightarrow \infty$. This will allow us to take a subsequential Uhlenbeck limit along $t_j\rightarrow\infty$ in \S2, and obtain a weak limit connection $D_{\infty}$, satisfying $D_{\infty}\Lambda F_{D_{\infty}}=0$ in a weak sense. The rest of \S2 is devoted to the regularity and removability of singularity of this equation. Indeed, this equation is a slightly more general form of the Hermitian-Einstein equation, and agrees with the Yang-Mills equation to the top order. The methods of Yang-Mills equation will apply. In \S3, we put these ingredients together, and use the results of \cite{Das-Wen} to prove the main theorem. In the course of the proof, we generalize another result in \cite{Das-Wen} to Gauduchon surfaces:
\begin{thm*}
Let $(E,\bar{\partial})$ be a holomorphic vector bundle over a compact Gauduchon surface $(X,g)$. Given any $\delta>0$ and any $1\le p<\infty$, there is an $L^p$-$\delta$-approximate critical Hermitian structure on $E$.
\end{thm*}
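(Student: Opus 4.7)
The plan is to proceed by induction on $\rank E$, constructing the approximate critical structure directly from a smooth splitting of the $g$-Harder-Narasimhan-Seshadri filtration and Hermitian-Einstein metrics on the stable graded pieces. This mirrors the construction in \cite{Das-Wen}, with adjustments for the non-K\"ahler geometry.

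The base case $\rank E = 1$ reduces to a scalar equation: I seek a conformal change $u$ solving $P u = i\Lambda F_{h_0} - \mu(E)$, where $P = i \Lambda \bar\partial \partial$ is the operator of \cite{Lub}. The right-hand side has zero Gauduchon integral by the definition of slope, so the equation is solvable. For the inductive step, reduce to the semistable case via the HN filtration $0 = E_0 \subset \cdots \subset E_\ell = E$, whose quotients $Q_i$ are semistable of strictly decreasing slopes $\mu_i$. Off the finite singular set $Z \subset X$ where the filtration fails to be by subbundles, fix a $C^\infty$-splitting $E \cong \bigoplus Q_i$. By induction each $Q_i$ admits an $L^p$-$\epsilon$-approximate HE metric $k_i$; I would then damp the extension classes of the splitting via the diagonal gauge transformation $g_s = \operatorname{diag}(s^{a_1} I_{Q_1}, \ldots, s^{a_\ell} I_{Q_\ell})$ with strictly decreasing weights $a_1 > \cdots > a_\ell$. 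Under $g_s$, each second fundamental form $\beta_{ij} \in \Omega^{0,1}(\operatorname{Hom}(Q_j, Q_i))$ with $i < j$ scales by $s^{a_i - a_j} \to 0$, so the off-diagonal blocks of $i \Lambda F_{h_s}$ vanish in $L^p$ as $s \to 0$; choosing $s$ and $\epsilon$ small gives a $\delta$-approximation. For the semistable base of the induction, take a Jordan-H\"older filtration with stable quotients $R_j$ of slope $\mu(E)$; Jacob's theorem \cite{Jac} provides genuine HE metrics on each $R_j$, and the same scaling argument produces an approximate HE metric on $E$.

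The main obstacle is the curvature computation for the deformed metric $h_s$ in the Gauduchon setting. In the K\"ahler case, the Chern connection is torsion-free and the off-diagonal blocks of $F_{h_s}$ reduce cleanly to $\partial \beta$ and $\bar\partial \beta$, but over a Gauduchon surface torsion terms involving $\partial \omega$ and $\bar\partial \omega$ enter. These must be controlled uniformly using $\partial \bar\partial \omega = 0$ together with the dimension-$2$ identities of Lemma \ref{lem-surface-torsion}, in the spirit of the estimates behind Proposition \ref{prop-bound-f} from \S 1. A subsidiary technical point is that $Z$ has codimension two, so $\beta$ and its derivatives lie in $L^p_{\loc}$ across $Z$ for every finite $p$, and the construction carried out on $X \setminus Z$ extends to give the claimed $L^p$-bound on all of $X$.
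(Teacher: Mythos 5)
Your strategy is the direct splicing construction (the route of \cite[Prop. 3.13]{Das-Wen}), whereas the paper deliberately does \emph{not} prove this theorem that way: it obtains the $L^p$ statement as a corollary of the heat flow, by taking an Uhlenbeck limit along a minimizing sequence, identifying its HN type (Prop. \ref{prop-uhlenbeck-limit-type}), and combining the $L^p$ convergence $\Lambda F_{D_{t_j}}\to\Lambda F_{D_\infty}$ of \eqref{eq-lambda-f-lp-limit} with the convergence of the HN projections (Lem. \ref{lem-dw-2}). The reason for this detour is precisely the step where your argument breaks down. The quotients of the HNS filtration are torsion-free sheaves that fail to be locally free at a finite set $Z$, and the associated orthogonal projections are only $W^{1,2}$-subbundles in the sense of Uhlenbeck--Yau. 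Consequently the second fundamental forms $\beta_{ij}$ are a priori only in $L^2$; near a point of $Z$ they typically blow up like the inverse of the distance, so $|\beta|^2$ --- which enters the diagonal blocks of $i\Lambda F_{h_s}$ through $\Lambda(\beta\wedge\beta^*)$ --- is integrable but not in $L^p$ for $p\ge 2$. Your claim that ``$\beta$ and its derivatives lie in $L^p_{\loc}$ across $Z$ for every finite $p$'' is therefore false in general; the norms $\|s^{a_i-a_j}\beta_{ij}\|_{L^{2p}}$ may be infinite for every $s$, and the spliced metric $h_s$ does not give an $L^p$-$\delta$-approximation for large $p$. This is exactly why Daskalopoulos--Wentworth blow up to resolve the singularities of the filtration, and even then the direct construction only yields an $L^\infty$ bound on the blow-up $\hat{X}$, which descends to control of $\HYM_{\alpha,N}$ on $X$ only for $\alpha$ close to $1$ --- that is the content and the role of Prop. \ref{prop-hermitian-metric} in this paper, and it is an \emph{input} to the flow argument, not a substitute for it.

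The parts of your construction away from $Z$ are sound and do match ingredients the paper borrows: the scalar base case is solvable because $\int_X(i\Lambda F_{h_0}-\mu(E))$ vanishes and constants span $\ker P^*$ for a Gauduchon metric; the damping $g_s$ scales $\beta_{ij}$ by $s^{a_i-a_j}\to 0$; and the semistable case must indeed be routed through the Seshadri filtration with \emph{stable} quotients, since the semistable analogue of \cite[Thm. 3.10]{Das-Wen} is not available on Gauduchon surfaces. Two smaller corrections: stable $\Rightarrow$ Hermitian--Einstein on compact Gauduchon surfaces is due to L\"ubke--Teleman \cite{Lub} (for surfaces also Buchdahl), not \cite{Jac}; and the induction has to be run on the reflexive hulls $Q_{i,j}^{**}$, since the $Q_{i,j}$ themselves are not bundles. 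But without a mechanism for handling the failure of $L^p$-integrability of the curvature at $Z$ --- i.e., without either the resolution-plus-flow argument or some new local analysis near the singular points --- the proposal does not prove the theorem as stated.
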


\noindent
\emph{Acknowledgement.} First and foremost, both authors would like to thank their supervisor D. H. Phong, for all his advice and support. The second author would also like to thank T. Collins for many helpful discussions.

\newpage

\section{Donaldson's heat flow on Hermitian manifolds}
\subsection{}
\label{subsec-prep}
Let $(X,g)$ be an Hermitian manifold of complex dimension $n$, and $(E,H)$ be a complex vector bundle over $X$ of rank $r$. When there is no ambiguity, we use the same notation for a vector bundle and the sheaf of its $C^{\infty}$-sections. We use $\Aut(E)$ (resp. $\End(E)$) to denote the automorphism (resp. endomorphism) bundle of $E$, and set
$$
\Aut(E,H):=\{w\in\Aut(E)\mid w^*w=\bb I\},\quad\End(E,H):=\{w\in\End(E)\mid w^*+w=0\}
$$
where $*$ denotes the adjoint taken with respect to metric $H$. The global sections of $\Aut(E)$ (resp. $\Aut(E,H)$) form a group known as the \emph{complex} (resp. \emph{real}) \emph{gauge group}, and we refer to a given section as a \emph{complex} (resp. \emph{real}) \emph{gauge transformation}. Let $\mathcal{A}(E,H)$ be the space of unitary connections on $E$ with respect to $H$. Any section $w\in\Aut(E)$ acts on $\mathcal{A}(E,H)$ by first decomposing $D\in\mathcal{A}(E,H)$ into $D=D'+D''$, its $(1,0)$ and $(0,1)$-parts, and then setting
\begin{equation}
\label{eq-complex-gauge-action}
w(D)=(w^*)^{-1}\circ D'\circ w^*+w\circ D''\circ w^{-1}
\end{equation}
When $w$ is a real gauge transformation, we have the familiar action $w(D)=w\circ D\circ w^{-1}$. A connection $D\in\mathcal{A}(E,H)$ is integrable, if it defines a holomorphic structure on $E$, or equivalently, if $(D'')^2=0$, or if its curvature form $F_D$ is of type $(1,1)$ (cf. \cite[Thm. 2.1.53]{Don-Kro}). It is clear from \eqref{eq-complex-gauge-action} that integrability is preserved under the action of $\Aut(E)$. The group $\Aut(E)$ also acts on the space of Hermitian metrics on $E$ by $w(H)(s_1,s_2)=H(w(s_1),w(s_2))$.

An equivalent setting is provided by the theory of principal bundles. Let $\fr X$ be the underlying real manifold of $X$, with the induced Riemannian metric. Let $P:=U(E,H)$ be the unitary frame bundle of $E$. Then $P$ is a principal bundle over $\fr X$, with real structure group $U(r)$. The space $\mathcal{A}(E,H)$ is canonically isomorphic to the space $\mathcal{A}(P)$ of connections on $P$. Furthermore, the $\mathrm{Ad}$-invariant inner-product $\langle A,B\rangle=\mathrm{tr}(A^*B)$ on $\fr u(r)$ induces a metric on $\End(E,H)$ that is equivalent to the metric given by $H$. In what follows, we will switch freely between the two viewpoints for convenience of the situation.

\medskip

\subsection{} Donaldson's heat flow, introduced in \cite{Don}, is a nonlinear parabolic equation of endomorphisms. More precisely, let us fix an Hermitian holomorphic bundle $(E,\bar{\partial},H_0)$ of rank $r$ over a compact Hermitian manifold $(X,g)$. Then Donaldson \cite{Don} shows that there exist sections $w_t$ of $\Aut(E)$, smoothly parametrized by $t\ge 0$, such that
\begin{enumerate}[(i)]
	\item $w_0=\bb I$, and
	\item if we write $H_t=w_t(H_0)$ and $h_t=w_t^*w_t$ where $*$ denotes the $H_0$-adjoint, there holds
\begin{equation}
\label{eq-donaldson-heat-flow}
h_t^{-1}\frac{d}{dt}h_t=-(i\Lambda F_{(\bar{\partial},H_t)}-\mu(E)\bb I)
\end{equation}
\end{enumerate}
where $F_{(\bar{\partial},H_t)}$ is the curvature form of $D_{(\bar{\partial},H_t)}$, the unique $H_t$-unitary connection whose $(0,1)$-part is $\bar{\partial}$. In this equation, $\mu(E)$ is the Hermitian-Einstein constant of $E$.

Let $D_0=D_{(\bar{\partial},H_0)}$, and it is straightforward to show that $D'_{(\bar{\partial},H_t)}=h_t^{-1}\circ D_0'\circ h_t$. This describes the connection one obtains by fixing a holomorphic structure and vary the metric. On the other hand, fixing the metric $H_0$, we can vary the holomorphic structure on $E$ by applying $w_t$ to $D_0$. The curvature form $F_{w_t(D_0)}$ is related to $F_{(\bar{\partial},H_t)}$ by
\begin{equation}
\label{eq-curvature-relation}
F_{w_t(D_0)}=w_t(D_0)'\circ w_t(D_0)''+w_t(D_0)''\circ w_t(D_0)'=w_tF_{(\bar{\partial},H_t)}w_t^{-1}
\end{equation}
using the integrability of $w_t(D_0)$. The principal motivation of studying the heat flow \eqref{eq-donaldson-heat-flow} in \cite{Don} is that $w_t(D_0)$ is equivalent, modulo real gauge transformations, to the Yang-Mills flow on K\"{a}hler manifolds. When $X$ is only an Hermitian manifold, this equivalence fails, but the same proof gives

\begin{prop}
The smooth family of connections $w_t(D_0)$ is equivalent, modulo real gauge transformations, to a smooth family $D_t\in\mathcal{A}(E,H_0)$ satisfying the equation
\begin{equation}
\tag{\ref{eq-uym}}
\frac{d}{dt}A_t=\frac{i}{2}\left(D''_t\Lambda F_{D_t}-D'_t\Lambda F_{D_t}\right),\quad\forall t\ge 0
\end{equation}
where $D_t=d+A_t$ is the local form of $D_t$. Furthermore, for any family $D_t$ satisfying \eqref{eq-uym}, $D_t$ is unitary and integrable for all $t\ge 0$.
\end{prop}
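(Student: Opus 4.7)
The plan is to mimic Donaldson's original argument in \cite{Don}: extract from the complex-gauge family $w_t(D_0)$ a canonical $H_0$-unitary representative $D_t$ by quotienting out a time-dependent real gauge, then derive \eqref{eq-uym} by direct differentiation. On a K\"ahler manifold, the equation arrived at by this procedure is the Yang-Mills flow, recovered from \eqref{eq-uym} via the K\"ahler identities; on a general Hermitian manifold no such simplification is available, and one retains \eqref{eq-uym} as is.

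First I would set $h_t = w_t^* w_t$ and let $p_t := h_t^{1/2}$ be its unique $H_0$-positive self-adjoint square root. Performing the polar decomposition $w_t = u_t p_t$ with $u_t := w_t p_t^{-1}$, one checks $u_t^* u_t = \bb I$, so $u_t$ is a real gauge transformation; substituting $w_t = u_t p_t$ into \eqref{eq-complex-gauge-action} then yields the identity $w_t(D_0) = u_t \circ p_t(D_0) \circ u_t^{-1}$. Define $D_t := p_t(D_0)$. A short local computation using $p_t^* = p_t$ and the $H_0$-unitarity of $D_0$ (equivalently $(A_0'')^* = -A_0'$) verifies that $D_t \in \mathcal{A}(E, H_0)$; integrability of $D_t$ is inherited from $D_0$ because complex-gauge action preserves integrability. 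Thus $w_t(D_0)$ and $D_t$ are equivalent modulo the real gauge $u_t$.

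Next I would differentiate the formula $D_t = p_t^{-1} D_0' p_t + p_t D_0'' p_t^{-1}$ in $t$. The derivative organizes into a $(1,0)$-piece and a $(0,1)$-piece each expressible as $D_t^{\bullet}(p_t^{-1}\dot p_t)$; meanwhile Donaldson's equation \eqref{eq-donaldson-heat-flow} determines $p_t^{-1}\dot p_t$ algebraically in terms of $i\Lambda F_{(\bar\partial, H_t)}$, which by \eqref{eq-curvature-relation} and the polar decomposition is conjugate-equivalent to $i\Lambda F_{D_t}$. The central term $\mu(E)\bb I$ drops out of the commutators $[D_t^{\bullet}, \,\cdot\,]$, and what remains is exactly the right-hand side of \eqref{eq-uym}.

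Finally, the two preservation claims follow from ODE invariance. For $H_0$-unitarity, if $D_t$ is unitary then $(\Lambda F_{D_t})^* = -\Lambda F_{D_t}$, hence after tracking types $(D_t''\Lambda F_{D_t})^* = -D_t'\Lambda F_{D_t}$; the combination $\tfrac{i}{2}(D_t''\Lambda F_{D_t} - D_t'\Lambda F_{D_t})$ is then skew-$H_0$-adjoint, and unitarity is preserved along \eqref{eq-uym}. For integrability, a direct computation gives the linear ODE
\begin{equation*}
\frac{d}{dt}(D_t'')^2 \;=\; D_t''\bigl(\tfrac{i}{2}D_t''\Lambda F_{D_t}\bigr) \;=\; \tfrac{i}{2}\bigl[(D_t'')^2,\, \Lambda F_{D_t}\bigr]
\end{equation*}
for the $\End(E)$-valued $(0,2)$-form $(D_t'')^2$; starting from $(D_0'')^2=0$, the solution is identically zero. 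The principal obstacle is the algebraic manipulation in the derivation step, where one has to track carefully how the polar factor $p_t$ and the unitary factor $u_t$ interact with the complex-gauge action; the argument crucially exploits the gauge freedom in Donaldson's family $w_t$.
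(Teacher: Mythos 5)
Your overall strategy (differentiate a unitary representative of the complex-gauge family and feed in Donaldson's equation \eqref{eq-donaldson-heat-flow}) matches the paper's, and your treatment of the ``furthermore'' claims via ODE invariance is fine --- arguably more direct than the paper's observation that the right-hand side of \eqref{eq-uym} is tangent to the $\Aut(E)$-orbit. However, the derivation step has a genuine gap: the polar representative $D_t:=p_t(D_0)$ does \emph{not} satisfy \eqref{eq-uym} on the nose. Writing $D_t=p_t^{-1}D_0'p_t+p_tD_0''p_t^{-1}$, the two pieces of the time derivative are $D_t'(p_t^{-1}\dot p_t)$ and $-D_t''(\dot p_t\,p_t^{-1})$ --- note the second is \emph{not} $-D_t''(p_t^{-1}\dot p_t)$ --- while Donaldson's equation only pins down $h_t^{-1}\dot h_t$; since $\dot h_t=\dot p_tp_t+p_t\dot p_t$, what you actually get is $p_t^{-1}\dot p_t+\dot p_tp_t^{-1}=-(i\Lambda F_{D_t}-\mu(E)\bb I)$, not a formula for $p_t^{-1}\dot p_t$ alone. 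Carrying this through gives
$$
\frac{d}{dt}A_t-\frac{i}{2}\left(D_t''\Lambda F_{D_t}-D_t'\Lambda F_{D_t}\right)=D_t(\beta_t),\qquad \beta_t=\tfrac{1}{2}\left(p_t^{-1}\dot p_t-\dot p_t\,p_t^{-1}\right),
$$
which vanishes only when $p_t$ commutes with $\dot p_t$ (e.g.\ in the abelian case). Since $\beta_t$ is skew-$H_0$-adjoint, the leftover term is tangent to the real gauge orbit and can indeed be removed, but this requires one further time-dependent unitary gauge transformation --- precisely the $\theta_t$ defined from $\alpha_t$ in the paper's proof, which your polar decomposition does not replace. (Equivalently: Donaldson's $w_t$ is determined by $h_t=w_t^*w_t$ only up to left multiplication by unitaries, and one must spend that freedom to make $\dot w_tw_t^{-1}$ self-adjoint, i.e.\ to solve $\dot w_tw_t^{-1}=-\tfrac{1}{2}(i\Lambda F_{w_t(D_0)}-\mu(E)\bb I)$; the self-adjoint choice $w_t=p_t$ is not that choice.) Adding this correction step completes the first half of your argument.
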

\begin{proof}
Since this is the same proof as in \cite[\S1]{Don}, we only sketch the computation. Let $\tilde{D}_t:=w_t(D_0)$, and $\tilde{D}_t=d+\tilde{A}_t$ in local form. Using \eqref{eq-donaldson-heat-flow}, \eqref{eq-curvature-relation}, and the identities
$$
\frac{d}{dt}\tilde{A}'_t=\tilde{D}'_t\left((w^*_t)^{-1}\frac{dw^*_t}{dt}\right),\quad\frac{d}{dt}\tilde{A}''_t=-\tilde{D}''_t\left(\frac{dw}{dt}w^{-1}\right)
$$
we may obtain
\begin{equation}
\label{eq-dhf-connection-version}
\frac{d}{dt}\tilde{A}_t=\frac{i}{2}\left(\tilde{D}_t''\Lambda F_{\tilde{D}_t}-\tilde{D}_t'\Lambda F_{\tilde{D}_t}\right)+\tilde{D}_t(\alpha_t)
\end{equation}
where
$$
\alpha_t=\frac{1}{2}\left((w_t^*)^{-1}\frac{dw_t^*}{dt}-\frac{dw_t}{dt}w_t^{-1}\right)\in\End(E,H_0)
$$
We use a real gauge transformation to eliminate the $\tilde{D}_t(\alpha_t)$ term. Define $\theta_t$ by the pointwise exponential map:
$$
\theta_t:=\exp\left(\int_0^t\alpha_sds\right)\in\Aut(E,h)
$$
and set $D_t=\theta_t^*(\tilde{D}_t)$, with $D_t=d+A_t$. It follows from \eqref{eq-dhf-connection-version} that
$$
\frac{d}{dt}A_t=\frac{i}{2}\left(D''_t\Lambda F_{D_t}-D'_t\Lambda F_{D_t}\right)+\theta_t\tilde{D}_t(\alpha_t)\theta_t^*-D_t\left(\frac{d\theta_t}{dt}\theta_t^*\right)
$$
where
$$
\theta_t\tilde{D}_t(\alpha_t)\theta_t^*=D_t(\theta_t\alpha_t\theta_t^*)=D_t\left(\frac{d\theta_t}{dt}\theta_t^*\right)
$$
using the definition of $D_t$ and $\theta_t$.

To prove that for any family $D_t$ satisfying \eqref{eq-uym}, $D_t$ is unitary and integrable for all $t\ge 0$, it suffices to show that $D_t=v_t(D_0)$ for some $v_t\in\Aut(E)$. In fact, the tangent space $T_D$ at any $D\in\mathcal{A}(E,H_0)$ to the $\Aut(E)$-orbit is given by
$$
T_D=\{D'(f^*)-D''(f),\quad\text{for some $f\in\End(E)$}\}
$$
Since $i\Lambda F_D\in\End(E)$ is self-adjoint, the endomorphism form $i(D''\Lambda F_D-D'\Lambda F_D)$ is an element of $T_D$. Therefore, any $D_t$ satisfying \eqref{eq-uym} lies in the $\Aut(E)$-orbit of $D_0$, and the proof is complete.
\end{proof}

\noindent
One important property of the flow equation \eqref{eq-uym} is that $\|\Lambda F_{D_t}\|_{L^{\infty}}$ is uniformly controlled along the flow, as showed by the following

\begin{lem}
\label{lem-lambda-f-lp}
Let $D_t$ be a solution of \eqref{eq-uym}. Then for all $1\le p\le\infty$, the norm $\|\Lambda F_{D_t}\|_{L^p}$ is non-increasing as a function of $t$. In particular, $\|\Lambda F_{D_t}\|_{L^p}$ is uniformly bounded in $t$, for all $1\le p\le\infty$.
\end{lem}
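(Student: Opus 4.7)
The plan is to derive a parabolic evolution equation for $\sigma_t := i\Lambda F_{D_t}$ from \eqref{eq-uym}, and then deduce $L^p$ monotonicity by integration by parts (for $1\le p<\infty$) and the parabolic maximum principle (for $p=\infty$), using the Gauduchon condition $\partial\bar{\partial}\omega^{n-1}=0$ to absorb torsion corrections.

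To derive the evolution equation, differentiate $F_{D_t}=dA_t+A_t\wedge A_t$ in $t$ to get $\frac{d}{dt}F_{D_t}=D_t\bigl(\frac{d}{dt}A_t\bigr)$, substitute \eqref{eq-uym}, and use the integrability of $D_t$ (so that $F_{D_t}$ is of type $(1,1)$ and $(D_t')^2=(D_t'')^2=0$ on $\End(E)$-valued $0$-forms). This should give
\[
\frac{d}{dt}F_{D_t} \;=\; \frac{i}{2}\bigl(D_t' D_t'' - D_t'' D_t'\bigr)\Lambda F_{D_t}.
\]
Apply $\Lambda$ and multiply by $i$. The symmetric part $\Lambda(D_t' D_t''+D_t'' D_t')\Lambda F_{D_t}$ equals $[\Lambda F_{D_t},\Lambda F_{D_t}]=0$, because $D_t' D_t''+D_t'' D_t'$ acts by adjoint action of $F_{D_t}^{(1,1)}=F_{D_t}$; so the computation collapses to
\[
\frac{d}{dt}\sigma_t + P_{D_t}\sigma_t \;=\; 0, \qquad P_{D_t}:=i\Lambda D_t'' D_t'.
\]
The operator $P_{D_t}$ is the $\End(E)$-analogue of the operator $P=i\Lambda\bar{\partial}\partial$ from \cite{Lub} highlighted in the introduction; in the K\"{a}hler case the K\"{a}hler identities reduce it to $(D_t')^* D_t'$, giving a standard forward heat equation for $\sigma_t$.

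For $1\le p<\infty$, compute
\[
\frac{d}{dt}\int_X |\sigma_t|^p\,dV \;=\; -p\int_X |\sigma_t|^{p-2}\langle\sigma_t, P_{D_t}\sigma_t\rangle\,dV
\]
and integrate by parts against $\omega^{n-1}$. The principal term should reorganize into a manifestly non-positive expression of the shape $-p\int|\sigma_t|^{p-2}|D_t'\sigma_t|^2\,dV$, plus a Cauchy--Schwarz-controlled correction coming from the derivative of $|\sigma_t|^{p-2}$. The torsion terms that arise because $[\Lambda,D_t']$ and $[\Lambda,D_t'']$ differ from their K\"{a}hler forms by a torsion of $\omega$ should assemble into an integrand proportional to $\partial\bar{\partial}\omega^{n-1}$, and therefore vanish by the Gauduchon hypothesis. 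For $p=\infty$, I would instead apply the parabolic maximum principle to $|\sigma_t|^2$, proving pointwise an inequality of the form $\frac{d}{dt}|\sigma_t|^2+\widetilde{P}|\sigma_t|^2\le 0$ modulo Gauduchon-vanishing terms, from which non-increase of $\sup_X|\sigma_t|^2$ follows.

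The main obstacle is the integration-by-parts step: in the K\"{a}hler case it is essentially automatic via the K\"{a}hler identities and a Bochner formula, but in the Gauduchon setting one has to carefully track every torsion term --- from the commutators $[\Lambda,D_t']$, $[\Lambda,D_t'']$ and from moving Chern derivatives past $\omega^{n-1}$ --- and verify that what remains collects into a multiple of $\partial\bar{\partial}\omega^{n-1}$ that dies on integration. Making this cancellation work uniformly in $p\in[1,\infty]$ is the delicate point of the argument.
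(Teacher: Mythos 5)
Your derivation of the evolution equation is correct and coincides with the paper's: equation \eqref{eq-flow-lambda-f} is exactly $\frac{d}{dt}\Lambda F_{D_t}=\frac{i}{2}\Lambda\left((D_t'D_t''-D_t''D_t')\Lambda F_{D_t}\right)$, and your observation that the symmetrized part satisfies $\Lambda(D'D''+D''D')\Lambda F=[\Lambda F,\Lambda F]=0$ is a correct (and tidy) simplification not made explicit in the paper. The difficulty is with what follows. First, the lemma is stated for an arbitrary compact Hermitian manifold --- the Gauduchon condition only enters in \S\ref{sec-gauduchon-surface}, after this lemma --- so you are not entitled to invoke $\partial\bar{\partial}\omega^{n-1}=0$. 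More importantly, for the case the paper actually proves and uses downstream you do not need it: the belief that the pointwise computation produces ``Gauduchon-vanishing terms'' misses the point of the operator $P=i\Lambda\bar{\partial}\partial$. The paper's proof is the exact pointwise identity
\begin{equation*}
\left(\tfrac{d}{dt}+P\right)|\Lambda F_{D_t}|^2=-2|D_t'\Lambda F_{D_t}|^2\le 0,
\end{equation*}
valid on any Hermitian manifold with \emph{no} torsion corrections whatsoever: in expanding $P|\Lambda F|^2$ by the Leibniz rule, $\Lambda$ is only ever applied to honest $2$-forms (the second-order terms cancel against $2\langle\frac{d}{dt}\Lambda F,\Lambda F\rangle$ via \eqref{eq-flow-lambda-f}, and the cross terms give $-2|D'\Lambda F|^2$), so the Demailly commutators $[\Lambda,D']$, $[\Lambda,D'']$ --- the sole source of torsion --- never appear. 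The maximum principle for $P$ then gives the $L^\infty$ monotonicity outright, and with it the uniform $L^p$ bounds.

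Second, your route to $1\le p<\infty$ is not closable as described. Integrating $\int_X|\sigma|^{p-2}\langle\sigma,P_{D}\sigma\rangle$ by parts against $\omega^{n-1}$ produces, after a single application of Stokes, cross terms pairing $\bar{\partial}\omega^{n-1}$ (not $\partial\bar{\partial}\omega^{n-1}$) with first derivatives; these do not vanish under the Gauduchon hypothesis alone, and the assertion that ``what remains collects into a multiple of $\partial\bar{\partial}\omega^{n-1}$'' is precisely the step you would have to prove. The workable organization --- the one the paper uses for the general functionals $\HYM_{\alpha,N}$ in Lem.~\ref{lem-other-hym-functional} --- is to first establish the \emph{pointwise} inequality $\left(\frac{d}{dt}+P\right)\varphi(\Lambda F_{D_t})\le 0$ for a convex $\mathrm{ad}$-invariant $\varphi$ (smoothing $|\cdot|^p$ near the origin when $p<2$, a singularity your weight $|\sigma|^{p-2}$ ignores), and only then integrate over $X$; the Gauduchon condition is used exactly once, in the identity $\int_X Pv\,dV=\mathrm{const}\cdot\int_X v\,\partial\bar{\partial}\omega^{n-1}=0$, which carries no stray first-order terms. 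If you insist on full $L^p$-monotonicity for $p<\infty$ you should say explicitly that this is where a hypothesis beyond ``compact Hermitian'' is being consumed.
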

\begin{proof}
Write $D_t=d+A_t$ locally. Then
$$
\frac{d}{dt}\Lambda F_{D_t}=\Lambda\frac{d}{dt}F_{D_t}=\Lambda D_t\left(\frac{d}{dt}A_t\right)
$$
where in the second equality, we have used the local expression $F_{D_t}=dA_t+A_t\wedge A_t$. Using the flow equation \eqref{eq-uym}, and the fact that $D_t$ is integrable, we obtain
\begin{equation}
\label{eq-flow-lambda-f}
\frac{d}{dt}\Lambda F_{D_t}=\frac{i}{2}\Lambda\left((D_t'D_t''-D_t''D_t')\Lambda F_{D_t}\right)
\end{equation}
Consider now the operator $P:=i\Lambda\bar{\partial}\partial$, where $d=\partial+\bar{\partial}$ is the decomposition into its $(1,0)$ and $(0,1)$-parts. The operator $P$ is a second order elliptic operator. Locally, the coefficients in front of the top-order differentials of $P$ form a symmetric, negative definite real matrix, and thus $P$ satisfies the maximum principle (cf. \cite[\S7.2]{Lub} for details). Let $|\Lambda F_{D_t}|$ denote the pointwise norm of $\Lambda F_{D_t}$. We may now compute using \eqref{eq-flow-lambda-f}:
\begin{align*}
\left(\frac{d}{dt}+P\right)|\Lambda F_{D_t}|^2=&2\langle\frac{d}{dt}\Lambda F_{D_t},\Lambda F_{D_t}\rangle+\frac{i}{2}\Lambda\left((\bar{\partial}\partial-\partial\bar{\partial})|\Lambda F_{D_t}|^2\right)\\
=&-2i\Lambda\langle D'_t\Lambda F_{D_t},D''_t\Lambda F_{D_t}\rangle\\
=&-2|D_t'\Lambda F_{D_t}|^2
\end{align*}
which is non-negative. The lemma follows by an application of the maximum principle.
\end{proof}

\medskip

\subsection{}
\label{sec-gauduchon-surface} For any integrable, unitary connection $D$ on the Hermitian vector bundle $(E,H_0)$, Demailly \cite{Dem} proved analogues of the K\"{a}hler identities:
\begin{equation}
\label{eq-dem-commutation-relation}
[\Lambda,D'']=-i((D')^*+\tau^*),\quad[\Lambda,D']=i((D'')^*+\bar{\tau}^*)
\end{equation}
where $\tau=[\Lambda,\partial\omega\wedge]$ is the \emph{torsion operator} of bidegree $(1,0)$. Applying \eqref{eq-dem-commutation-relation} and the Bianchi identities, we see that the curvature form $F_D$ satisfies
\begin{equation}
\label{eq-curvature-commutation-relation}
D^*F_D=iD'\Lambda F_D-iD''\Lambda F_D-(\tau+\bar{\tau})^*F_D
\end{equation}
The extra torsion term in \eqref{eq-curvature-commutation-relation} is responsible for many differences between the limiting behavior of the flow \eqref{eq-uym} and that of the Yang-Mills flow. For instance, one important property of the Yang-Mills flow is that the full curvature form has decreasing $L^2$-norm along the flow. In general, this property cannot be said for the flow \eqref{eq-uym}. However, in certain special cases a good control of $\|F_{D_t}\|_{L^2}$ is still available along the flow \eqref{eq-uym}, as can be seen from the following results.

\begin{lem}
\label{lem-surface-torsion}
Given any $2$-form $\xi$ on an Hermitian surface $X$. There holds
\begin{equation}
\label{eq-surface-torsion}
\Lambda \xi(d^*\omega)=-(\tau+\bar{\tau})^*\xi
\end{equation}
\end{lem}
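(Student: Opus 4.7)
My plan is to start from the definition $\tau = [\Lambda, \partial\omega\wedge]$ and its complex conjugate $\bar{\tau} = [\Lambda, \bar{\partial}\omega\wedge]$, so that $\tau+\bar{\tau} = [\Lambda, d\omega\wedge]$. Taking formal adjoints and using $\Lambda^{*} = L$ (the Lefschetz operator), I would rewrite
$$(\tau+\bar{\tau})^{*} = [(d\omega\wedge)^{*},\, L],$$
and evaluate this commutator on a $2$-form $\xi$. The decisive use of $\dim_{\mathbb{C}} X = 2$ enters here: $(d\omega\wedge)^{*}$ lowers form degree by three, so $(d\omega\wedge)^{*}\xi$ has formal degree $-1$ and vanishes on the four-real-dimensional manifold $X$. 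Only one term of the commutator survives, giving
$$(\tau+\bar{\tau})^{*}\xi = (d\omega\wedge)^{*}(\omega\wedge\xi).$$

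\textbf{Reduction to a single $1$-form.} On a complex surface, $\omega\wedge\xi$ is already a top-degree form, and the identity $\omega\wedge\xi = (\Lambda\xi)\,\vol$ is immediate for $\xi$ of pure type $(2,0)$ or $(0,2)$ (both sides vanish) and follows from the primitive decomposition for $\xi \in \Lambda^{1,1}$. The problem then collapses to identifying the single $1$-form $(d\omega\wedge)^{*}\vol$. I would test it against an arbitrary $1$-form $\alpha$: the pointwise adjoint relation gives $\langle (d\omega\wedge)^{*}\vol,\, \alpha\rangle = *(d\omega\wedge\alpha)$. Since $*\omega = \omega$ on a complex surface (as $\omega^{n-1}/(n-1)! = \omega$ when $n=2$), the definition $d^{*} = -*d*$ yields $d\omega = *(d^{*}\omega)$, and a short wedge computation produces
$$d\omega\wedge\alpha = -\alpha\wedge d\omega = -\alpha\wedge *(d^{*}\omega) = -\langle\alpha, d^{*}\omega\rangle\,\vol.$$
Hence $(d\omega\wedge)^{*}\vol = -d^{*}\omega$, and combining with the previous display gives $(\tau+\bar{\tau})^{*}\xi = -(\Lambda\xi)\,d^{*}\omega$, which is the claim.

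\textbf{Expected obstacle.} The argument is essentially bookkeeping and I do not foresee any structural difficulty; the only point requiring care is sign tracking, specifically the anticommutation $d\omega\wedge\alpha = -\alpha\wedge d\omega$ coming from $(-1)^{\deg d\omega \cdot \deg\alpha} = -1$, and the identity $**|_{\Lambda^{1}} = -1$ on a four-real-dimensional Riemannian manifold. Once these signs are pinned down, the identification $(d\omega\wedge)^{*}\vol = -d^{*}\omega$ is unambiguous. A quicker alternative would be to work directly in a local unitary coframe and verify both sides pointwise, but the commutator-and-adjoint approach above makes the role of the surface hypothesis (used only once, to kill $(d\omega\wedge)^{*}\xi$) completely transparent.
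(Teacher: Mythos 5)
Your argument is correct and is essentially the paper's proof in adjoint form: the paper tests $(\tau+\bar\tau)^*\xi$ against a $1$-form $\eta$ (killing the $d\omega\wedge\Lambda\eta$ term because $\Lambda\eta=0$, which is precisely dual to your killing of $L(d\omega\wedge)^*\xi$), arrives at $(\tau+\bar\tau)^*\xi=*(d\omega\wedge *L\xi)$, and then evaluates via the Lefschetz decomposition and $*\omega=\omega$ exactly as you do. One remark on your commentary, though: the vanishing of $(d\omega\wedge)^*\xi$ is \emph{not} where the surface hypothesis enters --- a $(-1)$-form vanishes in every dimension, and the resulting identity $(\tau+\bar\tau)^*\xi=(d\omega\wedge)^*(L\xi)$ is the general formula the paper reuses in its appendix. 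The dimension $n=2$ is actually used in the step $\omega\wedge\xi=(\Lambda\xi)\,\mathrm{vol}$ (equivalently $L\xi_2=0$ for the primitive part, false for $n\ge 3$) and in $*\omega=\omega$; for $n\ge 3$ the primitive part contributes the extra term recorded in Prop.~\ref{prop-general-torsion}.
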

\begin{proof}
Note that $\tau+\bar{\tau}=[\Lambda,d\omega\wedge]$. Hence for any $1$-form $\eta$,
$$
\langle\eta,(\tau+\bar{\tau})^*\xi\rangle=\langle\Lambda(d\omega\wedge\eta),\xi\rangle=d\omega\wedge\eta\wedge *L\bar\xi=\langle\eta,*(d\omega\wedge *L\xi)\rangle
$$
It follows that
\begin{equation}
\label{eq-torsion-adjoint}
(\tau+\bar\tau)^*\xi=*(d\omega\wedge*L\xi)
\end{equation}
Consider the Lefschetz decomposition $\xi=\xi_2+L\xi_0$, where $\xi_0\in\Lambda^0X$, and $\xi_2\in\Lambda^2X$. Since $X$ is a surface, $\xi_2$ satisfies both $L\xi_2=0$ and $\Lambda\xi_2=0$. Furthermore, using $*(\omega^2)=2$, $\Lambda\xi=2\xi_0$, and $*\omega=\omega$, we compute
$$
(\tau+\bar\tau)^*\xi=*(d\omega\wedge *L^2\xi_0)=*(d\omega\wedge *\xi_0(\omega^2))=2\xi_0(*d\omega)=\Lambda\xi(*d*\omega)=-\Lambda\xi(d^*\omega)
$$
as required.
\end{proof}

\noindent
Since $\Lambda$ and $\tau$ are both linear algebraic operators on forms, \eqref{eq-surface-torsion} applies to the curvature form $F_D$, and we obtain the following form of \eqref{eq-curvature-commutation-relation}:
\begin{equation}
\label{eq-curvature-commutation-relation-surface}
D^*F_D=iD'\Lambda F_D-iD''\Lambda F_D+\Lambda F_D(d^*\omega)
\end{equation}
over any Hermitian surface $X$.

\begin{rem}
\label{rem-surface-torsion}
The merit of Lem. \ref{lem-surface-torsion} is that the extra torsion term in \eqref{eq-curvature-commutation-relation-surface} only involves $\Lambda F_D$ instead of the full curvature form $F_D$. In the appendix, we will say more about the expression $(\tau+\bar{\tau})^*\xi$ in higher dimensions.
\end{rem}

Given an arbitrary Hermitian manifold $(X,g)$ of dimension $n$, in the conformal class of $g$ one can find an Hermitian metric whose associated $(1,1)$-form $\omega$ satisfies $\partial\bar{\partial}(\omega^{n-1})=0$ (cf. \cite{Gau}). Such a metric is called a \emph{Gauduchon metric}, and $(X,g)$ (or simply $X$ when the metric is understood from the context) is called a \emph{Gauduchon manifold}. Equivalently, $X$ is Gauduchon if and only if $(\bar{\partial}\partial)^*\omega=0$, because of the identities 
$$
*\omega=\frac{\omega^{n-1}}{(n-1)!},\quad\text{and}\quad(\bar{\partial}\partial)^*=\partial^*\bar{\partial}^*=\pm *\bar{\partial}\partial*
$$
If $X$ is compact, this latter condition can be rewritten as
\begin{equation}
\label{eq-gauduchon-perpendicularity}
\int_X\langle\partial^*\omega,\bar{\partial}f\rangle=0,\quad\forall f\in\Lambda^0X
\end{equation}
In other words, $\partial^*\omega$ is perpendicular to the $\bar{\partial}$-exact forms in the space $\Lambda^{0,1}X$. Of course, a similar property holds for the $(1,0)$-form $\bar{\partial}^*\omega$. The following proposition is an important consequence of this geometric input.

\begin{prop}
\label{prop-bound-f}
Assume the base manifold $(X,g)$ is a Gauduchon surface, and let $D_t$ be a solution of \eqref{eq-uym}. Then
\begin{equation}
\label{eq-flow-f}
\frac{d}{dt}\|F_{D_t}\|_{L^2}^2=-\|D_t\Lambda F_{D_t}\|_{L^2}^2,\quad\forall t\ge 0
\end{equation}
In particular, $\|F_{D_t}\|_{L^2}$ is non-increasing as a function of $t$.
\end{prop}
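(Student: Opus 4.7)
The plan is to differentiate $\|F_{D_t}\|^2_{L^2}$ in $t$, integrate by parts, and then substitute the flow equation \eqref{eq-uym} and the surface identity \eqref{eq-curvature-commutation-relation-surface}. Writing $u := \Lambda F_{D_t}$ throughout, this gives
$$
\frac{d}{dt}\|F_{D_t}\|^2_{L^2} = 2\int_X\langle \tfrac{d}{dt}A_t, D_t^* F_{D_t}\rangle = 2\int_X\left\langle \tfrac{i}{2}(D''_tu - D'_tu),\; iD'_tu - iD''_tu + u\cdot d^*\omega\right\rangle,
$$
and the integrand splits naturally into a ``main'' piece paired with $iD'_tu - iD''_tu$ and a ``torsion'' piece paired with $u\cdot d^*\omega$.

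The main piece is handled as in the K\"ahler case: using bidegree orthogonality between $D'_tu$ and $D''_tu$ together with the identity $(D''_tu)^* = -D'_tu$ (which follows from $u^* = -u$, and gives in particular $|D'_tu|^2 = |D''_tu|^2$), a routine expansion shows that this piece contributes exactly $-\|D_tu\|^2_{L^2}$. Decomposing $d^*\omega = \partial^*\omega + \bar{\partial}^*\omega$ by bidegree, the proposition therefore reduces to showing that the torsion piece
$$
T := i\int_X\langle D''_tu,\; u\cdot\partial^*\omega\rangle - i\int_X\langle D'_tu,\; u\cdot\bar{\partial}^*\omega\rangle
$$
vanishes.

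The vanishing of $T$ is the heart of the proof, and it is here that the Gauduchon hypothesis enters. The strategy is to recognize each integrand as a scalar pairing of $\partial^*\omega$ (resp.\ $\bar{\partial}^*\omega$) against a $\bar{\partial}$-exact (resp.\ $\partial$-exact) form. Concretely, the Leibniz rule applied to the trace identity $\mathrm{tr}(u^2) = -|u|^2$ (again from $u^* = -u$) yields $\mathrm{tr}(D''_tu \cdot u) = -\tfrac{1}{2}\bar{\partial}|u|^2$, and working in a local unitary frame the pointwise pairing of $\mathrm{End}(E)$-valued $(0,1)$-forms reduces $\langle D''_tu,\; u\cdot\partial^*\omega\rangle$ to $\tfrac{1}{2}\langle\bar{\partial}|u|^2, \partial^*\omega\rangle$; the conjugate calculation handles the other summand of $T$. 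The Gauduchon condition \eqref{eq-gauduchon-perpendicularity}, applied with $f = |u|^2$ (and its complex conjugate, valid since $\omega$ is real), then forces $T = 0$. The main obstacle I expect is the careful pointwise bookkeeping — tracking factors of $i$, sesquilinearity signs on $\mathrm{End}(E)$, and the trace identity — needed to recognize the torsion integrand as exactly the expression annihilated by the Gauduchon condition.
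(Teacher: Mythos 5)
Your proposal is correct and follows essentially the same route as the paper: differentiate and integrate by parts to produce $D_t^*F_{D_t}$, expand via the surface identity \eqref{eq-curvature-commutation-relation-surface}, substitute the flow equation so the principal term yields $-\|D_t\Lambda F_{D_t}\|_{L^2}^2$, and reduce the residual torsion terms to $\tfrac{i}{2}\langle\bar{\partial}|\Lambda F_{D_t}|^2,\partial^*\omega\rangle$ and its conjugate, which integrate to zero by the Gauduchon condition \eqref{eq-gauduchon-perpendicularity}. The only difference is that you spell out the pointwise trace/Leibniz bookkeeping that the paper states without comment.
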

\begin{proof}
We first compute the time derivative of the pointwise norm
$$
\frac{d}{dt}|F_{D_t}|^2=2\langle\frac{d}{dt}F_{D_t},F_{D_t}\rangle=2\langle D_t\left(\frac{d}{dt}A_t\right),F_{D_t}\rangle=2\langle\frac{d}{dt}A_t,D_t^*F_{D_t}\rangle
$$
where for the last equality, we have used the local expression $F_{D_t}=dA_t+A_t\wedge A_t$. Using \eqref{eq-curvature-commutation-relation-surface}, we expand the term $D_t^*F_{D_t}$ to get
\begin{align*}
\frac{d}{dt}|F_{D_t}|^2=&2\langle\frac{d}{dt}A_t,iD_t'\Lambda F_{D_t}-iD_t''\Lambda F_{D_t}+\Lambda F_{D_t}(\partial^*\omega)+\Lambda F_{D_t}(\bar{\partial}^*\omega)\rangle\\
=&\langle iD_t''\Lambda F_{D_t}-iD_t'\Lambda F_{D_t},iD_t'\Lambda F_{D_t}-iD_t''\Lambda F_{D_t}+\Lambda F_{D_t}(\partial^*\omega)+\Lambda F_{D_t}(\bar{\partial}^*\omega)\rangle
\\
=&-|D_t\Lambda F_{D_t}|^2-\langle iD_t'\Lambda F_{D_t},\Lambda F_{D_t}(\bar{\partial}^*\omega)\rangle+\langle iD_t''\Lambda F_{D_t},\Lambda F_{D_t}(\partial^*\omega)\rangle
\end{align*}
Note that
$$
\langle iD_t''\Lambda F_{D_t},\Lambda F_{D_t}(\partial^*\omega)\rangle=\frac{i}{2}\langle\bar{\partial}|\Lambda F_{D_t}|^2,\partial^*\omega\rangle,\quad\text{and}\quad\langle iD_t'\Lambda F_{D_t},\Lambda F_{D_t}(\bar{\partial}^*\omega)\rangle=\frac{i}{2}\langle\partial|\Lambda F_{D_t}|^2,\bar{\partial}^*\omega\rangle
$$
both integrate to zero, by \eqref{eq-gauduchon-perpendicularity} and its analogue for $\bar{\partial}^*\omega$. The proposition is proved.
\end{proof}

\begin{cor}
\label{cor-time-sequence}
Assume the base manifold $(X,g)$ is a Gauduchon surface, and let $D_t$ be a solution of \eqref{eq-uym}. Then there is a sequence $t_j\rightarrow\infty$ such that $\|D_{t_j}\Lambda F_{D_{t_j}}\|_{L^2}\rightarrow 0$.
\end{cor}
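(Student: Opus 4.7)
The plan is to deduce the corollary essentially as an immediate consequence of Proposition \ref{prop-bound-f}. The key identity from that proposition is
$$\frac{d}{dt}\|F_{D_t}\|_{L^2}^2 = -\|D_t\Lambda F_{D_t}\|_{L^2}^2,$$
so $\|F_{D_t}\|_{L^2}^2$ is non-increasing and bounded below by $0$. Integrating from $0$ to $T$ gives
$$\int_0^T \|D_t \Lambda F_{D_t}\|_{L^2}^2 \, dt = \|F_{D_0}\|_{L^2}^2 - \|F_{D_T}\|_{L^2}^2 \le \|F_{D_0}\|_{L^2}^2,$$
and letting $T \to \infty$, we obtain $\int_0^\infty \|D_t\Lambda F_{D_t}\|_{L^2}^2 \, dt < \infty$.

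From the finiteness of this integral, I would conclude $\liminf_{t\to\infty}\|D_t\Lambda F_{D_t}\|_{L^2} = 0$ by the usual dichotomy: if instead we had $\liminf_{t\to\infty}\|D_t\Lambda F_{D_t}\|_{L^2}^2 \ge \epsilon$ for some $\epsilon > 0$, then there would be some $T_0$ with $\|D_t\Lambda F_{D_t}\|_{L^2}^2 \ge \epsilon/2$ for all $t \ge T_0$, making the tail $\int_{T_0}^\infty \|D_t\Lambda F_{D_t}\|_{L^2}^2\,dt$ infinite, a contradiction. Extracting a sequence $t_j \to \infty$ realizing this lower limit yields $\|D_{t_j}\Lambda F_{D_{t_j}}\|_{L^2} \to 0$, which is the claim.

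There is no real obstacle here — everything is powered by the monotonicity identity \eqref{eq-flow-f}, which in turn uses both the Gauduchon condition (via \eqref{eq-gauduchon-perpendicularity}) and the surface hypothesis (via Lemma \ref{lem-surface-torsion}). The only mild point worth noting is that the initial curvature $F_{D_0}$ is smooth on the compact manifold $X$, so $\|F_{D_0}\|_{L^2}^2$ is automatically finite, which is what makes the integrability of $\|D_t\Lambda F_{D_t}\|_{L^2}^2$ over $[0,\infty)$ genuine.
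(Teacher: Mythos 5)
Your proof is correct and follows exactly the paper's own argument: integrate the identity \eqref{eq-flow-f} over $[0,T]$, use monotonicity and non-negativity of $\|F_{D_t}\|_{L^2}^2$ to conclude $\int_0^\infty\|D_t\Lambda F_{D_t}\|_{L^2}^2\,dt<\infty$, and extract a sequence realizing the vanishing liminf. The only difference is that you spell out the liminf dichotomy, which the paper leaves implicit.
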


\noindent
Given a solution $D_t$ of the flow \eqref{eq-uym}, we will call any such sequence $t_j\rightarrow\infty$ a \emph{minimizing sequence}.

\begin{proof}
By integrating \eqref{eq-flow-f} from $t=0$ to $t=T\ge 0$, we obtain
$$
\|F_{D_T}\|_{L^2}^2-\|F_{D_0}\|_{L^2}^2=-\int_0^T\|D_t\Lambda F_{D_t}\|_{L^2}^2dt
$$
Since $\|F_{D_T}\|_{L^2}^2$ is non-increasing as a function of $T$, the right-hand-side is finite as $T\rightarrow\infty$. Hence there exists a sequence $t_j\rightarrow\infty$ such that $\|D_{t_j}\Lambda F_{D_{t_j}}\|_{L^2}\rightarrow 0$.
\end{proof}

\medskip

\section{Critical Hermitian structures and Uhlenbeck limits}
\subsection{} For a fixed holomorphic vector bundle $(E,\bar{\partial})$, a \emph{critical Hermitian structure} on $E$ is a critical point of $\|\Lambda F_{(\bar{\partial},H)}\|_{L^2}^2$, as a functional defined on the space of Hermitian metrics on $E$. By \cite[\S IV, Thm. 3.21]{Kob}, the metric $H$ is a critical Hermitian structure if and only if $D_{(\bar{\partial},H)}\Lambda F_{(\bar{\partial},H)}=0$. It follows from this equation that $E$ admits a direct sum decomposition into holomorphic subbundles $Q^{(i)}$, where each induced metric $H^{(i)}$ is Hermitian-Einstein, i.e. it satisfies $i\Lambda F_{(\bar{\partial},H^{(i)})}=\mu(Q^{(i)})\bb I$ (cf. \cite[\S IV, Thm. 3.27]{Kob}). For a general holomorphic vector bundle, critical Hermitian structures may not exist. Hence along some minimizing sequence $t_j\rightarrow\infty$ (cf. Cor. \ref{cor-time-sequence}), a ``limit" has to be found on a possibly different bundle. The precise meaning of such limits will be given by the concept of Uhlenbeck limits. We will prove in this section that any minimizing sequence $t_j\rightarrow\infty$ has an Uhlenbeck limit which is a critical Hermitian structure. In doing so, several analytic results about the equation
\begin{equation}
\label{eq-chs}
D\Lambda F_D=0
\end{equation}
are required, such as regularity and removability of singularity (on surfaces). In the K\"{a}hler setting, they are available from the analysis of Yang-Mills equation. However, on a general Hermitian manifold, \eqref{eq-chs} differs from the Yang-Mills equation by a torsion term. In the case of Hermitian-Einstein equations, these results are perhaps known to experts. Nevertheless, the authors supply proofs for them.

\medskip

\subsection{}
\label{sec-limit-definition}Let $(E,H)$ be an Hermitian vector bundle of rank $r$ over an Hermitian manifold $(X,g)$. Fix a smooth unitary connection $D_0$ on $E$. Then any (possibly non-smooth) connection $D$ can be written as $D_0+\alpha$ for some endomorphism-valued 1-form $\alpha$. When $X$ is compact, we may define the $W^{k,p}$-norm of $D$ by that of $\alpha$. The space $\mathcal{A}^{k,p}(E,H)$ of unitary connections with bounded $W^{k,p}$-norm is a Banach space. When $X$ is non-compact, we set
$$
\mathcal{A}_{\loc}^{k,p}(E,H):=
\left.
     \begin{array}{lr}
       \text{space of unitary connections on $X$ that}\\
       \text{restrict to $\mathcal{A}^{k,p}(E|_K,H)$ for all compact $K\subset X$}
     \end{array}
   \right.
$$
When $X$ is compact, we also use $\Aut^{k,p}(E,H)$ to denote the Banach space of real gauge transformations with bounded $W^{k,p}$-norm. When $X$ is non-compact, set
$$
\Aut_{\loc}^{k,p}(E,H):=
\left.
     \begin{array}{lr}
       \text{space of real gauge transformations on $X$ that}\\
       \text{restrict to $\Aut^{k,p}(E|_K,H)$ for all compact $K\subset X$}
     \end{array}
   \right.
$$
In the equivalent setting of the principal bundle $P=U(E,H)\rightarrow\fr X$, where $\fr X$ is the underlying Riemannian manifold of $X$, these spaces can also be defined using the trace inner-product on $\fr u(r)$ (cf. \S\ref{subsec-prep}). Wehrheim \cite[Appendix A\&B]{Weh} contains an excellent summary of their properties in relation to Uhlenbeck compactness.

\begin{defn}
\label{defn-uhlenbeck-limit}
Let $(E,H)$ be an Hermitian vector bundle over a compact Hermitian surface $(X,g)$. For any $p\ge 1$, a sequence of connections $D_j$ \emph{converges weakly in $W^{1,p}$ along some subsequence $D_{j_k}$ to an Uhlenbeck limit} $D_{\infty}$, if there exist
\begin{enumerate}[(i)]
	\item a finite set $Z^{\an}\subset X$,
	
	\item a $W^{1,p}_{\loc}$-connection $D_{\infty}$ on the Hermitian vector bundle $(E|_{X-Z^{\an}},H)$ over $X-Z^{\an}$, and
	
	\item a sequence of unitary gauge transformations $\tau_{j_k}\in\Aut^{2,p}_{\loc}(E|_{X-Z^{\an}},H)$ such that
	$$
	\tau_{j_k}(D_{j_k})\rightharpoonup D_{\infty}\quad\text{weakly in $W_{\loc}^{1,p}$}
	$$
	as connections on $E|_{X-Z^{\an}}$.
\end{enumerate}
When $p$ is fixed, we will simply say that $D_{\infty}$ is an Uhlenbeck limit of the sequence $D_j$ along $D_{j_k}$.
\end{defn}

\begin{rem}
\label{rem-defn-uhlenbeck-limit}
By passing to a further subsequence of $D_{j_k}$ if necessary, we may replace (ii)(iii) in the definition by
\begin{enumerate}[(i)]
	\item[(ii${}^*$)] a smooth Hermitian vector bundle $(E_{\infty},H_{\infty})$ over $X-Z^{\an}$ equipped with a $W^{1,p}_{\loc}$-connection $D_{\infty}$, and
	
	\item[(iii${}^*$)] for any compact set $K\subset\subset X-Z^{\an}$, a $W^{2,p}$-isometry $\tau^K:(E_{\infty},H_{\infty})|_K\rightarrow(E,H)|_K$ such that for $K\subset K'\subset\subset X-Z^{\an}$, $\tau^K=\tau^{K'}|_K$, and $\tau^K(D_{j_k})\rightharpoonup D_{\infty}$ weakly in $W^{1,p}(K)$.
\end{enumerate}
This is the definition given in \cite{Das-Wen}, and is apparently stronger than our (ii)(iii). We prove that (ii)(iii) imply (ii${}^*$)(iii${}^*$) in the appendix.
\end{rem}

Assume $(X,g)$ is a compact Hermitian surface. When $D_j$ is a sequence of integrable, unitary connections on $(E,H)$ satisfying some uniform bounds on the curvature, the bubbling set $Z^{\an}$ and a convergent subsequence away from $Z^{\an}$ can be found by the following weak compactness result. Its proof in the K\"{a}hler case can be deduced from Uhlenbeck \cite[Thm. 2.2]{Uhl_a}, but in the non-K\"{a}hler case, we need to substitute the key $L^p$-estimate by an argument of Donaldson.

\begin{prop}
\label{prop-weak-compactness}
Let $(E,h)$ be an Hermitian vector bundle over a compact Hermitian surface $(X,g)$. Suppose $D_j$ is a sequence of integrable, unitary connections on $E$ such that $\|\Lambda F_{D_j}\|_{L^{\infty}}$, $\|F_{D_j}\|_{L^2}$ are uniformly bounded. Fix any $p>2$. Then $D_j$ converges weakly in $W^{1,p}$ along some subsequence $D_{j_k}$ to an Uhlenbeck limit $D_{\infty}$.
\end{prop}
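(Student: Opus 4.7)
The proof follows Uhlenbeck's weak compactness strategy from \cite{Uhl_a}: identify a finite concentration set $Z^{\an}$, place the $D_j$ into local Coulomb gauges on balls away from $Z^{\an}$, and patch these into a global weak limit. The only subtlety in the Gauduchon setting is that the classical $L^p$-estimate for integrable curvature underlying Uhlenbeck's argument must be replaced by an argument of Donaldson that exploits integrability of $D_j$ together with the uniform $L^\infty$-bound on $\Lambda F_{D_j}$.

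First, I fix $\varepsilon_0>0$ small enough for Uhlenbeck's local Coulomb gauge theorem to apply on a ball of radius $r_0$, and define
$$
Z^{\an}=\left\{x\in X\;\Big|\;\limsup_{j\to\infty}\int_{B_r(x)}|F_{D_j}|^2\ge\varepsilon_0\text{ for all }r>0\right\}.
$$
The uniform bound $\|F_{D_j}\|_{L^2}\le C$ forces $|Z^{\an}|\le C/\varepsilon_0$, and a standard covering and diagonal argument then produces a subsequence (still indexed by $j$) and, for each $x\in X\setminus Z^{\an}$, a ball $B\subset X\setminus Z^{\an}$ on which $\int_B|F_{D_j}|^2<\varepsilon_0$ for all large $j$. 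Applying Uhlenbeck's small-energy gauge fixing on each such $B$ yields unitary gauge transformations $\sigma_j^B$ with $\sigma_j^B(D_j)=d+A_j^B$, $d^*A_j^B=0$, and $\|A_j^B\|_{W^{1,2}(B)}\le C\|F_{D_j}\|_{L^2(B)}$.

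The crucial step is bootstrapping from $W^{1,2}$ to $W^{1,p}$ for some $p>2$. In \cite{Uhl_a} this comes from an $L^p$-bound on $F$ via the K\"{a}hler identities; in the Gauduchon case, the analogous identities \eqref{eq-dem-commutation-relation} pick up the torsion operator $\tau$, and the commutation relation \eqref{eq-curvature-commutation-relation-surface} carries the extra term $\Lambda F_{D_j}(d^*\omega)$ which obstructs the same estimate. Following Donaldson, I bypass the curvature bound altogether by using integrability of $D_j$ directly: in Coulomb gauge, $F_{D_j}^{0,2}=0$ reads
$$
\bar\partial(A_j^B)^{0,1}+(A_j^B)^{0,1}\wedge(A_j^B)^{0,1}=0,
$$
while $d^*A_j^B=0$ combined with \eqref{eq-dem-commutation-relation} expresses $\bar\partial^*(A_j^B)^{0,1}$ as an explicit combination of $\Lambda F_{D_j}$, a quadratic-in-$A_j^B$ term, and a torsion correction linear in $A_j^B$. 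The resulting Cauchy--Riemann--type elliptic system for $(A_j^B)^{0,1}$ has right-hand side controlled in $L^\infty+L^2$ by the hypotheses, and iterative elliptic regularity combined with the Sobolev embedding on the real $4$-manifold upgrades the uniform $W^{1,2}$-bound to a uniform $W^{1,p}$-bound depending only on $p$, $\varepsilon_0$, and $\|\Lambda F_{D_j}\|_{L^\infty}$.

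With uniform local $W^{1,p}$-bounds in hand, cover $X\setminus Z^{\an}$ by countably many such balls $\{B_\ell\}$, pass to weakly $W^{1,p}$-convergent subsequences on each ball, and diagonalize. On pairwise overlaps $B_\ell\cap B_{\ell'}$, the transition gauges $\sigma_j^{B_\ell}(\sigma_j^{B_{\ell'}})^{-1}$ are bounded in $W^{2,p}$ and hence precompact in $W^{1,q}$ for $q<p$; a further diagonal extraction produces consistent limit transitions that glue the local weak limits into a global $W^{1,p}_{\loc}$-connection $D_\infty$ on $(E,H)|_{X\setminus Z^{\an}}$, verifying Definition~\ref{defn-uhlenbeck-limit}. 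The main obstacle is the Donaldson-style bootstrap in the third paragraph: the Gauduchon torsion must be kept in the subprincipal part of the elliptic system, and the quadratic nonlinearity handled carefully so that the critical Sobolev exponent in real dimension four does not obstruct the iteration.
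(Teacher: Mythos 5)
Your proposal is correct and follows essentially the same route as the paper: small-energy Coulomb gauges away from a finite concentration set (the paper invokes Sedlacek's Prop.\ 3.3 for this), Donaldson's integrability argument to upgrade the uniform $W^{1,2}$-bound to a $W^{1,p}$-bound using only $\|\Lambda F_{D_j}\|_{L^\infty}$, and then patching (the paper cites Wehrheim's Thm.\ 7.5). The only cosmetic difference is that you phrase Donaldson's step as a Dolbeault-type elliptic system for $(A_j^B)^{0,1}$, whereas the paper runs it through the equivalent overdetermined real operator $d^*\oplus(\pi^+\circ d)$, noting that for an integrable connection the self-dual part of $F_D$ is proportional to $\Lambda F_D$.
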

\begin{proof}
Since Uhlenbeck gauge exists for the unit ball $B_1\subset\CC^2$ with an Hermitian metric $g$ which is $W^{2,\infty}$-close to the standard metric (cf. \cite[Thm. 6.3]{Weh}), by a scaling argument we can find constants $\varepsilon$, $C>0$, depending only on $p$ and the geometry of $X$ and $E$, such that the following holds: any $x\in X$ admits a neighborhood $U$ on which any connection $D=d+A\in\mathcal{A}^{1,2}(E|_U,H)$ satisfying $\|F_D\|\le\varepsilon$ can be put in the Uhlenbeck gauge, i.e. there exists $u\in\Aut^{2,n}(E|_{\overline{U}},H)$ such that
$$
d^*(u(A))=0,\quad\text{and}\quad\|u(A)\|_{W^{1,2}(U)}\le C\|F_D\|_{L^2(U)}
$$
where $u(A)=uAu^{-1}-(du)u^{-1}$ is the local connection $1$-form associated to $u(D)$. Let $\pi^+:\Lambda^2X\rightarrow\Lambda^2(X)^+$ be the orthogonal projection onto the self-dual part $\Lambda^2(X)^+$ of $\Lambda^2(X)$. Then the operator
$$
d^*\oplus(\pi^+\circ d):\Lambda^1X\rightarrow\Lambda^0X\oplus\Lambda^2X
$$
is an overdetermined elliptic operator, thus satisfying elliptic regularity (cf. \cite[Appendix III]{Don-Kro}). Assuming furthermore that $D$ is integrable, the self-dual part of $F_D$ is proportional to the contraction $\Lambda F_D$. Hence by further shrinking the constant $\varepsilon$ if necessary, the proof of \cite[Cor. 23]{Don} shows the following estimate:
\begin{equation}
\label{eq-donaldson-a-priori-estimate}
\|F_D\|_{L^p(V)}\le C\left(\|F_D\|_{L^2(U)}+\|\Lambda F_D\|_{L^{\infty}(U)}^4\right)
\end{equation}
for some refinement $V$ of $U$, containing $x$. Consider this $\varepsilon$ as fixed. We now use Sedlacek \cite[Prop. 3.3]{Sed} to find
\begin{enumerate}[(i)]
	\item a finite set $Z^{\an}\subset X$,
	
	\item a subsequence of $D_j$ (still denoted by $D_j$), and
	
	\item a cover $\{U_{\alpha}\}$ of $X-Z^{\an}$, such that over each $U_{\alpha}$, $\|F_{D_j}\|_{L^2(U_{\alpha})}\le\varepsilon$ for $j$ sufficiently large along this subsequence.
\end{enumerate}
Consider an increasing exhaustion $X-Z^{\an}=\bigcup_{k\ge 1}X_k$ by compact subsets which are deformation retracts of $X$. Then each $X_k$ is covered by finitely many open subsets $V_{\alpha}$ such that
$$
\|F_{D_j}\|_{L^p(V_{\alpha})}\le C_{\alpha}\left(\|F_{D_j}\|_{L^2(X_k)}+\|\Lambda F_{D_j}\|_{L^{\infty}(X_k)}^4\right)
$$
for $j$ larger than some $j_{\alpha}$. It thus follows that $\|F_{D_j}\|_{L^p(X_k)}$ is uniformly bounded in $j$. The global gauge transformations $\tau_j\in\Aut^{2,p}_{\loc}(E,H)$ are provided by the patching result \cite[Thm. 7.5]{Weh}. Finally, using the Sobolev embedding $W^{2,p}\hookrightarrow L^{2p}$, we see that $F_{D_j}\rightharpoonup F_{D_{\infty}}$ weakly in $L^p_{\loc}$. Thus $D_{\infty}$ is still integrable. The unitarity condition $dh=A_j^t\cdot h+h\cdot\bar{A}_j$ is also preserved under the weak limit.
\end{proof}

\medskip

\subsection{} The limit connection $D_{\infty}=d+A_{\infty}$ produced by Prop. \ref{prop-weak-compactness} is \emph{a priori} only in $\mathcal{A}^{1,p}_{\loc}(E|_{X-Z^{\an}},H)$ with $p>2$. Its curvature form $F_{D_{\infty}}=dA_{\infty}+A_{\infty}\wedge A_{\infty}$ is in $L^p_{\loc}$. Thus we may only speak of the equation $D_{\infty}\Lambda F_{D_{\infty}}=0$ in a weak sense. We will prove, in rather general context, that every such weak solution is gauge equivalent to a smooth one. We use the notation $\Theta^k(E,H)$ to denote the space of skew-self-adjoint $\End(E)$-valued $k$-forms of the bundle $E$. Note that by restricting such $k$-forms to real tangent vectors, we obtain a canonical isomorphism $\Theta^k(E,H)\cong\Lambda^k(\fr X,\End(E,H))$.

\begin{prop}
\label{prop-regularity}
Let $(X,g)$ be an Hermitian manifold of dimension $n\ge 2$, such that there is an increasing exhaustion $X=\bigcup_{k=1}^{\infty}X_k$ by compact subsets which are deformation retracts of $X$. Let $(E,H)$ be an Hermitian vector bundle over $X$. Then for any $p>n$, and any integrable connection $D\in\mathcal{A}^{1,p}_{\loc}(E,H)$ such that
\begin{equation}
\label{eq-weak-chs}
\int_X\langle\Lambda F_D,D^*\beta\rangle=0,\quad\forall\beta\in C_0^{\infty}(X,\Theta^1(E,H))
\end{equation}
there exists some $u\in\Aut_{\loc}^{2,p}(E,H)$ such that $u(D)$ is smooth, and satisfies $u(D)\Lambda F_{u(D)}=0$.
\end{prop}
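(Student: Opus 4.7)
The plan is to reduce local regularity to a standard elliptic bootstrap carried out in a Coulomb gauge, and then to patch the local gauges into a single global element of $\Aut^{2,p}_{\loc}(E,H)$. First, I would use that $F_D \in L^p_{\loc}$ with $p > n > n/2$ to cover $X$ by geodesic balls $U_\alpha$ so small that $\|F_D\|_{L^{n/2}(U_\alpha)}$ falls below Uhlenbeck's threshold. The local gauge theorem (\cite[Thm.~6.3]{Weh}) then produces $u_\alpha \in \Aut^{2,p}(E|_{U_\alpha},H)$ with $u_\alpha(D) = d + A_\alpha$, $d^* A_\alpha = 0$, and $\|A_\alpha\|_{W^{1,p}(U_\alpha)} \leq C\|F_D\|_{L^p(U_\alpha)}$.

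Next, I would derive a quasilinear elliptic PDE for $A_\alpha$. The hypothesis \eqref{eq-weak-chs} says that $D\Lambda F_D = 0$ in the sense of distributions, and substituting this into the Demailly identity \eqref{eq-curvature-commutation-relation} gives
\[
D^* F_D = -(\tau + \bar\tau)^* F_D
\]
weakly on $X$. In the Coulomb gauge on $U_\alpha$ this translates to
\[
\Delta A_\alpha = -d^*(A_\alpha \wedge A_\alpha) + *[A_\alpha, *F_{A_\alpha}] - (\tau+\bar\tau)^* F_{A_\alpha},
\]
where the torsion piece is a zeroth-order algebraic operator on $F_{A_\alpha}$ with smooth coefficients determined by $g$. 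Because $p > n$, the Sobolev embedding $W^{1,p} \hookrightarrow C^0$ holds and $W^{1,p}$ is a multiplication algebra on an $n$-manifold, so the right-hand side lies in $L^p$. Interior elliptic regularity promotes $A_\alpha$ to $W^{2,p}_{\loc}$, and iterating the bootstrap yields $A_\alpha \in C^\infty(U_\alpha)$. By the gauge-invariance of the weak equation, $u_\alpha(D)$ then satisfies $u_\alpha(D)\Lambda F_{u_\alpha(D)} = 0$ in the classical sense.

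To finish, I would patch the $u_\alpha$ into a single global gauge transformation. On each overlap the transition $g_{\alpha\beta} = u_\alpha u_\beta^{-1}$ conjugates the smooth connection $u_\beta(D)$ into the smooth connection $u_\alpha(D)$, so $dg_{\alpha\beta}$ is expressed algebraically in terms of $g_{\alpha\beta}$ and smooth data; starting from $g_{\alpha\beta} \in W^{2,p}$, elliptic bootstrap (or direct Sobolev composition) gives $g_{\alpha\beta} \in C^\infty$. Wehrheim's patching theorem \cite[Thm.~7.5]{Weh}, applied along the exhaustion $X = \bigcup X_k$ (which is precisely where the deformation-retract hypothesis enters, to control $H^1$-obstructions), then assembles the $u_\alpha$ into a global $u \in \Aut^{2,p}_{\loc}(E,H)$ such that $u(D)$ is smooth on all of $X$ and satisfies $u(D)\Lambda F_{u(D)} = 0$.

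The step I expect to be the most delicate is the initial bootstrap from $W^{1,p}$ to $W^{2,p}$: the quadratic term $d^*(A_\alpha \wedge A_\alpha)$ is \emph{a priori} only in some weak space, and one needs the borderline hypothesis $p > n$ (rather than a small-energy hypothesis in $W^{1,2}$) to put $A_\alpha$ in $C^0$ and to keep the nonlinear term in $L^p$ at the first iteration. The torsion term $(\tau+\bar\tau)^* F_{A_\alpha}$ contributed by the non-Kähler geometry is more benign, being zeroth-order in $F_{A_\alpha}$, but must still be tracked through the bootstrap. A secondary subtlety is confirming that the patched global gauge lies in $\Aut^{2,p}_{\loc}$ rather than some weaker class; this should follow from Wehrheim's construction once the local transitions are known to be smooth.
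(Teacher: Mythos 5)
Your overall architecture --- put $D$ in a Coulomb-type gauge, use the weak equation together with Demailly's identity \eqref{eq-dem-commutation-relation} to write $D^*F_D=-(\tau+\bar\tau)^*F_D$ and hence a quasilinear elliptic system for the connection form, bootstrap, and patch the local gauges --- is the same as the paper's, and your treatment of the torsion term and of the smoothness of the transitions $g_{\alpha\beta}$ is correct. However, there is a genuine gap in the central bootstrap step: you have conflated the complex dimension $n$ with the real dimension $2n$ of $X$. The hypothesis is $p>n$, but the Sobolev embedding $W^{1,p}\hookrightarrow C^0$ on a $2n$-real-dimensional manifold requires $p>2n$, and $W^{1,p}$ is a multiplication algebra only in that range. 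For $n<p\le 2n$, the connection form $A_\alpha\in W^{1,p}$ need not be bounded; one only has $A_\alpha\in L^{p^*}$ with $p^*=\frac{2np}{2n-p}$, so the quadratic terms $d^*(A_\alpha\wedge A_\alpha)$ and $*[A_\alpha\wedge *F_{A_\alpha}]$ land in $L^q$ with $q=\frac{2np}{4n-p}<p$, not in $L^p$. Your first elliptic step therefore produces only $W^{2,q}$, and the iteration must climb a ladder of exponents before reaching the algebra range $kp>2n$. This is exactly why the paper (following Wehrheim) sets up the local estimate \eqref{eq-local-estimate} with the two regimes $kp>2n$, $q=p$ versus $k=1$, $n<p<2n$, $q=\frac{2np}{4n-p}$, and the embeddings \eqref{eq-sobolev-analysis}. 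The same confusion appears in your gauge-fixing step, where the small-energy threshold for Uhlenbeck gauge on a ball in $\RR^{2n}$ is $\|F_D\|_{L^{n}}$, not $\|F_D\|_{L^{n/2}}$ (this one is harmless, since $F_D\in L^p_{\loc}$ with $p>n$ still lets you shrink the balls). Note that in the paper's actual application (Prop.~\ref{prop-uhlenbeck-limit}, where $n=2$ and $p>4=2n$) your claim happens to be true, but the proposition is stated for all $p>n$, and the gap is real in the range $n<p\le 2n$.

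Two secondary remarks. First, the paper does not use local Uhlenbeck gauges on small balls for this proposition; it uses a relative Coulomb gauge $\tilde D^*(u_k(D)-\tilde D)=0$ with respect to a smooth reference connection on each compact piece $X_{k+1}$ of the exhaustion (\cite[Thm.~8.1, Lem.~8.4(iii)]{Weh}), together with the gauge invariance of the weak equation (Lem.~\ref{lem-gauge-invariance}) and the patching result \cite[Prop.~9.8]{Weh}; your ball-by-ball version can be made to work but requires the extra argument you sketch about smoothness of the transitions, and the patching theorem you cite (\cite[Thm.~7.5]{Weh}) is the one used for weak compactness, not the one designed to preserve smoothness of the locally gauged connection. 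Second, to pass from the hypothesis \eqref{eq-weak-chs} to the weak identity $D^*F_D=-(\tau+\bar\tau)^*F_D$ you need the commutation relations and the Bianchi identity to hold for $W^{1,p}$-connections; the paper justifies this by a smooth approximation argument, which should be stated.
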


\noindent
A $W^{1,p}_{\loc}$-connection $D$ satisfying \eqref{eq-weak-chs} is said to \emph{satisfy $D\Lambda F_D=0$ in the weak sense} over $X$. These connections are invariant under $W^{2,p}_{\loc}$-gauge transformations:

\begin{lem}
\label{lem-gauge-invariance}
Assume $p>n$. If $D\in\mathcal{A}^{1,p}_{\loc}(E,H)$ satisfies $D\Lambda F_D=0$ in the weak sense over $X$, so does $u(A)$ for every $u\in\Aut_{\loc}^{2,p}(E,H)$.
\end{lem}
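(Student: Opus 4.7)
The plan is to verify the weak equation for $u(D)$ by substitution into the weak equation for $D$, after first enlarging the class of admissible test forms. Under the gauge transformation by $u\in\Aut^{2,p}_{\loc}(E,H)$, the curvature satisfies $F_{u(D)}=uF_Du^{-1}$, so $\Lambda F_{u(D)}=u(\Lambda F_D)u^{-1}$. The induced connection on $\End(E)$-valued forms acts by conjugation, $u(D)\psi=uD(u^{-1}\psi u)u^{-1}$, and taking the $L^2$-adjoint (using $u^*=u^{-1}$) gives $(u(D))^*\gamma=uD^*(u^{-1}\gamma u)u^{-1}$. Since unitary conjugation preserves the trace inner product on each $\Theta^k(E,H)$, I obtain for every $\gamma\in C_0^{\infty}(X,\Theta^1(E,H))$
\[
\int_X\langle\Lambda F_{u(D)},(u(D))^*\gamma\rangle=\int_X\langle\Lambda F_D,D^*(u^{-1}\gamma u)\rangle.
\]

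The right-hand side is not quite of the form \eqref{eq-weak-chs} because the test form $\beta:=u^{-1}\gamma u$ is only in $W^{2,p}_c(X,\Theta^1(E,H))$, not smooth. The main step is therefore to upgrade \eqref{eq-weak-chs} from $C_0^{\infty}$ test forms to all such $\beta$. Given $\beta\in W^{2,p}_c$ with support in a compact $K$, I will mollify in local trivializations over a slightly larger compact neighborhood $K'\supset K$ (using a partition of unity) to produce $\beta_n\in C_0^{\infty}(X,\Theta^1(E,H))$ supported in $K'$ with $\beta_n\to\beta$ in $W^{2,p}$. Since $p>n$, the Morrey embedding $W^{1,p}\hookrightarrow L^{\infty}$ makes products of $A\in W^{1,p}_{\loc}$ with elements of $W^{2,p}_c$ continuous into $W^{1,p}$, so $D^*$ is continuous from $W^{2,p}_c$ into $W^{1,p}\hookrightarrow L^{\infty}$ on $K'$. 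Pairing $D^*\beta_n\to D^*\beta$ against $\Lambda F_D\in L^p_{\loc}$ then yields
\[
\int_X\langle\Lambda F_D,D^*\beta\rangle=\lim_n\int_X\langle\Lambda F_D,D^*\beta_n\rangle=0.
\]

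With the extended weak equation in hand, I substitute $\beta=u^{-1}\gamma u$, which lies in $W^{2,p}_c$ because $W^{2,p}$ is an algebra for $p>n$. Combined with the display in the first paragraph, this gives $\int_X\langle\Lambda F_{u(D)},(u(D))^*\gamma\rangle=0$ for every $\gamma\in C_0^{\infty}(X,\Theta^1(E,H))$, which is exactly the statement that $u(D)$ satisfies $u(D)\Lambda F_{u(D)}=0$ weakly. The transformation identities in the first paragraph are formal, but each product must be interpreted in the appropriate Sobolev sense; the technical heart of the argument is the density step, which relies essentially on the hypothesis $p>n$ to place $u$, $u^{-1}$, and $A$ into $C^0$ and to control every nonlinear term that appears in $D^*\beta$.
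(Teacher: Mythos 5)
Your proof is correct and follows essentially the same route as the paper's: first extend the weak equation \eqref{eq-weak-chs} from $C_0^{\infty}$ to compactly supported $W^{2,p}$ test forms by smooth approximation, then substitute $u^{-1}\gamma u$ via the identity $u(D)^*\gamma=uD^*(u^{-1}\gamma u)u^{-1}$ and unitarity of conjugation. One minor inaccuracy: on a complex $n$-fold (real dimension $2n$) the embedding $W^{1,p}\hookrightarrow L^{\infty}$ requires $p>2n$ rather than $p>n$, but this is harmless because pairing against $\Lambda F_D\in L^p_{\loc}$ only needs $D^*\beta_k\to D^*\beta$ in $L^{p'}_{\loc}$ (with $p'$ the H\"older conjugate), which your $W^{1,p}$ bound already supplies and which is exactly the continuity the paper invokes.
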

\begin{proof}
Observe first that under the assumption $p>n$, $D$ extends to a continuous map from the space of $W^{2,p}$-forms to that of $L^{p'}$ forms, where $p'$ is the H\"{o}lder conjugate of $p$. Because the metric $g$ is smooth, the same property holds for $D^*=-*D*$. Therefore, a smooth approximation argument shows that
$$
\int_X\langle\Lambda F_D,D^*\gamma\rangle=0,\quad\forall\gamma\in W^{2,p}_0(X,\Theta^1(E,H))
$$
Now, for any $\beta\in C^{\infty}_0(X,\Theta^1(E,H))$, it follows from $u(D)(\beta)=uD(u^{-1}\beta u)u^{-1}$ that $u(D)^*(\beta)=uD^*(u^{-1}\beta u)u^{-1}$. Hence
$$
\int_X\langle\Lambda F_{u(D)},u(D)^*\beta\rangle=\int_X\langle u\Lambda F_Du^{-1},uD^*(u^{-1}\beta u)u^{-1}\rangle=\int_X\langle\Lambda F_D,D^*(u^{-1}\beta u)\rangle=0
$$
since $u^{-1}\beta u\in W_0^{2,p}(X,\Theta^1(E,H))$ (\cite[Lem. A.7]{Weh}).
\end{proof}

\noindent
Prop. \ref{prop-regularity} will follow from iteration of a local interior estimate. Let $U\subset\CC^n$ be a bounded domain equipped with an Hermitian metric $g$. Let $E=U\times\CC^r$ be a trivial bundle over $U$ with Hermitian metric $H$. We fix a reference smooth connection $\tilde{D}=d+\tilde{A}\in\mathcal{A}(E,H)$ and three numbers $k\in\mathbb{N}$, $p,q>1$ satisfying either
\begin{enumerate}[(i)]
	\item $kp>2n$ and $q=p$, or
	
	\item $k=1$, $n<p<2n$ and $q=\frac{2np}{4n-p}$.
\end{enumerate}
These choices of constants are the same as in Wehrheim's account \cite{Weh} for the Yang-Mills equation. They make possible the following Sobolev embeddings and multiplications:
\begin{equation}
\label{eq-sobolev-analysis}
W^{k,p}\otimes W^{k,p}\rightarrow W^{k,q},\quad W^{k,p}\hookrightarrow W^{k,q},\quad W^{k,q}\hookrightarrow W^{k-1,p},\quad W^{k,p}\otimes W^{k-1,p}\rightarrow W^{k-1,q}
\end{equation}
The last one, for instance, can be proved by showing that there exists some $r>1$ such that both $W^{k,p}\hookrightarrow W^{k-1,r}$ and $W^{k-1,r}\otimes W^{k-1,p}\rightarrow W^{k-1,q}$. The key local interior estimate is given by

\begin{lem}
\label{lem-local-estimate}
Let $D=\tilde{D}+\alpha\in\mathcal{A}^{k,p}(E,H)$ be integrable. Suppose that $\tilde{D}^*\alpha=0$, and $D$ satisfies $D\Lambda F_D=0$ in the weak sense over $U$. Then for every open subset $V\subset\subset U$, there exists a constant $C$ such that
\begin{equation}
\label{eq-local-estimate}
\|\alpha\|_{W^{k+1,q}(V)}\le C\left(1+\|\alpha\|_{W^{k,p}(U)}^3\right)
\end{equation}
\end{lem}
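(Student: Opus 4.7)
The strategy is to convert the weak equation $D\Lambda F_D=0$, together with the Coulomb condition $\tilde D^*\alpha=0$, into a semilinear elliptic equation for $\alpha$ of the form $\Delta_{\tilde D}\alpha=N(\alpha)$, where $\Delta_{\tilde D}:=\tilde D^*\tilde D+\tilde D\tilde D^*$, and then to apply interior elliptic regularity for $\Delta_{\tilde D}$ together with the Sobolev chain \eqref{eq-sobolev-analysis} to control $\|N(\alpha)\|_{W^{k-1,q}(U)}$.

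First, I would translate the weak equation into a statement about the full curvature. Since $\Lambda F_D$ is a $0$-form, splitting $D\Lambda F_D=0$ by bidegree (weakly, by testing against $(1,0)$ and $(0,1)$ parts of the test form) gives $D'\Lambda F_D=D''\Lambda F_D=0$. Plugging this into Demailly's identity \eqref{eq-curvature-commutation-relation} applied to $D$ yields the weak identity
\[
D^*F_D=-(\tau+\bar\tau)^*F_D,
\]
whose right-hand side is algebraic of order zero in $F_D$ since $\tau+\bar\tau=[\Lambda,d\omega\wedge]$. Substituting $F_D=F_{\tilde D}+\tilde D\alpha+\alpha\wedge\alpha$ and $D^*\eta=\tilde D^*\eta-*[\alpha,*\eta]$, and adding the Coulomb identity $\tilde D\tilde D^*\alpha=0$, I obtain
\[
\Delta_{\tilde D}\alpha=-\tilde D^*F_{\tilde D}-\tilde D^*(\alpha\wedge\alpha)+*[\alpha,*F_D]-(\tau+\bar\tau)^*F_D=:N(\alpha),
\]
a semilinear elliptic equation with smooth principal symbol. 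The nonlinearity $N(\alpha)$ is polynomial in $\alpha$ and $\tilde D\alpha$ of total degree at most three, the cubic contribution arising solely from $*[\alpha,*(\alpha\wedge\alpha)]$.

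Next I would invoke the standard interior $W^{k+1,q}$-estimate for the Laplace-type operator $\Delta_{\tilde D}$: for $V\subset\subset U$ and $C=C(U,V,\tilde D,g)$,
\[
\|\alpha\|_{W^{k+1,q}(V)}\le C\bigl(\|\Delta_{\tilde D}\alpha\|_{W^{k-1,q}(U)}+\|\alpha\|_{L^q(U)}\bigr),
\]
and estimate $\|N(\alpha)\|_{W^{k-1,q}(U)}$ term by term using \eqref{eq-sobolev-analysis}. The constant pieces $\tilde D^*F_{\tilde D}$ and $(\tau+\bar\tau)^*F_{\tilde D}$ account for the $O(1)$ summand in \eqref{eq-local-estimate}; linear and quadratic pieces contribute $\|\alpha\|_{W^{k,p}(U)}+\|\alpha\|_{W^{k,p}(U)}^2$, using in particular the multiplication $W^{k,p}\otimes W^{k-1,p}\to W^{k-1,q}$ for mixed products like $\alpha\cdot\tilde D\alpha$; the cubic piece is handled by chaining $W^{k,p}\otimes W^{k,p}\to W^{k,q}$, $W^{k,q}\hookrightarrow W^{k-1,p}$, and $W^{k,p}\otimes W^{k-1,p}\to W^{k-1,q}$, producing $\|\alpha\|_{W^{k,p}(U)}^3$. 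Finally, $\|\alpha\|_{L^q(U)}$ is absorbed into $1+\|\alpha\|_{W^{k,p}(U)}^3$ via $W^{k,p}\hookrightarrow L^q$.

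The main obstacle is bookkeeping these Sobolev multiplications in the two parameter regimes (i) and (ii), ensuring that every nonlinear product in $N(\alpha)$ actually lands in $W^{k-1,q}(U)$ with the claimed estimate; this is exactly what the choices in \eqref{eq-sobolev-analysis} are designed to guarantee, following Wehrheim \cite{Weh}. Compared with the K\"ahler case the only new ingredient is the torsion term $-(\tau+\bar\tau)^*F_D$, but since it is algebraic and of order zero in $F_D$, it neither raises the order of the PDE nor the degree of nonlinearity, and the classical Yang-Mills Coulomb-gauge bookkeeping adapts essentially verbatim.
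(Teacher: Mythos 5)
Your plan is correct and follows essentially the same route as the paper: use the Coulomb condition and the weak equation (via Demailly's commutation relations and the weak Bianchi identity, which produce exactly the torsion term $-(\tau+\bar\tau)^*F_D$) to exhibit $\alpha$ as a weak solution of a Laplace-type equation with an at-most-cubic nonlinearity, then apply interior elliptic regularity and the Sobolev chain \eqref{eq-sobolev-analysis}. The only cosmetic difference is that you work with the covariant Laplacian $\Delta_{\tilde D}$ while the paper uses the flat Hodge Laplacian $\Delta=dd^*+d^*d$ and absorbs the $\tilde A$-terms into the right-hand side; these differ by smooth lower-order terms and lead to the same estimate.
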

\begin{proof}
Let $\Delta=dd^*+d^*d$ be the Hodge Laplacian on smooth forms. Then for any $\beta\in C_0^{\infty}(U,\Theta^1(E,H))$,
\begin{equation}
\label{eq-compute-laplacian}
\int_U\langle\alpha,\Delta\beta\rangle=\int_U\langle d^*\alpha,d^*\beta\rangle+\int_U\langle d\alpha,d\beta\rangle=\int_U\langle\gamma,\beta\rangle
\end{equation}
for some 1-form $\gamma$, which we will compute explicitly. Write $D=d+A$ and $\tilde{D}=d+\tilde{A}$. Note that $d^*\alpha=\tilde{D}^*\alpha+*[\tilde{A}\wedge *\alpha]=*[\tilde{A}\wedge*\alpha]$, where $[\cdot,\cdot]$ denotes the commutator (or equivalently in the principal bundle setting, the Lie bracket on $\fr u(r)$). Thus
$$
\int_U\langle d^*\alpha,d^*\beta\rangle=\int_U\langle d\left(*[\tilde{A}\wedge *\alpha]\right),\beta\rangle
$$
From $F_D=dA+A\wedge A=dA+\frac{1}{2}[A\wedge A]$, it follows that $d\alpha=F_D-F_{\tilde{D}}-[(\tilde{A}+\frac{1}{2}\alpha)\wedge\alpha]$. Together with $d\beta=D\beta-[A\wedge\beta]$, we obtain
\begin{equation}
\label{eq-compute-laplacian-2}
\int_U\langle d\alpha,d\beta\rangle=\int_U\langle F_D,D\beta\rangle-\int_U\langle F_{\tilde{D}}+[(\tilde{A}+\frac{1}{2}\alpha)\wedge\alpha],D\beta\rangle-\int_U\langle d\alpha,[A\wedge\beta]\rangle
\end{equation}
To compute the first term, recall the commutation relations of Demailly \cite{Dem}:
\begin{equation}
[(D'')^*,L]=i(D'+\tau),\quad [(D')^*,L]=-i(D''+\bar{\tau})
\end{equation}
which are adjoints of the relations \eqref{eq-dem-commutation-relation}. They also hold for $W^{1,p}$-connections by a smooth approximation argument. Since $D$ is integrable, $\langle F_D,D\beta\rangle=\langle F_D,D'\beta^{0,1}\rangle+\langle F_D,D''\beta^{1,0}\rangle$, where $\beta=\beta^{1,0}+\beta^{0,1}$ is the decomposition of $\beta$ into its $(1,0)$ and $(0,1)$-components. The first part
\begin{align*}
\int_U\langle F_D,D'\beta^{0,1}\rangle=&\int_U\langle F_D,-i(D'')^*(L\beta^{0,1})\rangle+\int_U\langle F_D,iL(D'')^*(\beta^{0,1})\rangle+\int_U\langle F_D,-\tau(\beta^{0,1})\rangle\\
=&\int_U\langle -\tau^*(F_D),\beta\rangle
\end{align*}
using the hypothesis, and a weak form of Bianchi identity. Note that $\langle\tau^*(F_D),\beta^{0,1}\rangle=\langle\tau^*(F_D),\beta\rangle$ by degree considerations. Using a similar argument to compute the integral of $\langle F_D,D''\beta^{1,0}\rangle$, we conclude
\begin{equation}
\label{eq-compute-ym-term}
\int_U\langle F_D,D\beta\rangle=\int_U\langle-(\tau+\bar{\tau})^*F_D,\beta\rangle=\int_U\langle-(\tau+\bar{\tau})^*(F_{\tilde{D}}+d\alpha+[(\tilde{A}+\frac{1}{2}\alpha)\wedge\alpha],\beta)\rangle
\end{equation}
The computation for the second and third terms in \eqref{eq-compute-laplacian-2} is completely identical to the one in \cite[proof of Prop. 9.5]{Weh}. Altogether, we find the following expression for $\gamma$ in \eqref{eq-compute-laplacian}:
\begin{align}
\label{eq-express-gamma}
\gamma=&d(*[\tilde{A}\wedge *\alpha])-(\tau+\bar{\tau})^*(F_{\tilde{D}}+d\alpha+[(\tilde{A}+\frac{1}{2}\alpha)\wedge\alpha])-\tilde{D}^*(F_{\tilde{D}}+[(\tilde{A}+\frac{1}{2}\alpha)\wedge\alpha])\notag\\
&+*[\alpha\wedge *(F_{\tilde{D}}+[(\tilde{A}+\frac{1}{2}\alpha)\wedge\alpha])]-*[\tilde{A}^*\wedge *d\alpha]-*[\alpha^*\wedge *d\alpha]
\end{align}
where $\tilde{A}^*$ (resp. $\alpha^*$) denotes the adjoint of the endomorphism part of $\tilde{A}$ (resp. $\alpha$), coupled with the conjugate of its form part. As in \cite{Weh}, we estimate the $W^{k-1,q}$-norm of $\gamma$, using smoothness of $\tilde{A}$ and $F_{\tilde{D}}$:
$$
\|\gamma\|_{W^{k-1,q}}\le C\left(1+\|\alpha\|_{W^{k,q}}+\|[\alpha\wedge\alpha]\|_{W^{k,q}}+\|\alpha\wedge *[\alpha\wedge\alpha]\|_{W^{k-1,q}}+\|\alpha^*\wedge *d\alpha\|_{W^{k-1,q}}\right)
$$
where each individual terms may be estimated using \eqref{eq-sobolev-analysis}:
\begin{align*}
\|\alpha\|_{W^{k,q}}\le& C\|\alpha\|_{W^{k,p}},\quad\|[\alpha\wedge\alpha]\|_{W^{k,q}}\le C\|\alpha\|_{W^{k,p}}^2\\
\|\alpha\wedge*[\alpha\wedge\alpha]\|_{W^{k-1,q}}\le& C\|\alpha\|_{W^{k,p}}\|[\alpha\wedge\alpha]\|_{W^{k-1,p}}\le C\|\alpha\|_{W^{k,p}}\|[\alpha\wedge\alpha]\|_{W^{k,q}}\le C\|\alpha\|_{W^{k,p}}^3\\
\|\alpha^*\wedge *d\alpha\|_{W^{k-1,q}}\le& C\|\alpha\|_{W^{k,p}}\|d\alpha\|_{W^{k-1,p}}\le C\|\alpha\|_{W^{k,p}}^2
\end{align*}
Altogether, $\gamma$ is bounded in $W^{k-1,q}$. By \eqref{eq-compute-laplacian}, $\Delta\alpha=\gamma$ in the weak sense, so interior regularity of the Hodge Laplacian $\Delta$ implies that for every $V\subset\subset U$,
$$
\|\alpha\|_{W^{k+1,q}(V)}\le C(1+\|\alpha\|_{W^{k,p}(U)}+\|\alpha\|_{W^{k,p}(U)}^2+\|\alpha\|_{W^{k,p}(U)}^3+\|\alpha\|_{L^q(U)})
$$
Since $k\ge 1$, and $q\le p$, $\|\alpha\|_{L^q(U)}$ can be absorbed into $\|\alpha\|_{W^{k,p}(U)}$. By the elementary inequality $\|\alpha\|_{W^{k,p}(U)}\le\frac{1}{2}(1+\|\alpha\|_{W^{k,p}(U)}^2)$, the lower-degree terms may also be absorbed into $\|\alpha\|_{W^{k,p}(U)}^3$, giving \eqref{eq-local-estimate}.
\end{proof}

\begin{proof}[Proof of Prop. \ref{prop-regularity}]
The proof is the same as \cite[Cor. 9.6(ii) \& Thm. 9.4]{Weh}, and does not use the equation except for the local estimate \eqref{eq-local-estimate}. We briefly outline the argument. Let $X''\subset X'\subset X$ be compact subsets of $X$, such that $X''$ is contained in the interior of $X'$. Fix a reference smooth connection $\tilde{D}\in\mathcal{A}(E,H)$. If $D=\tilde{D}+\alpha\in\mathcal{A}^{k,p}_{\loc}(E,H)$ with $\tilde{D}^*\alpha=0$. Then Lem. \ref{lem-local-estimate} implies that
$$
\|\alpha\|_{W^{k+1,q}(X'')}\le C\left(1+\|\alpha\|^3_{W^{k,p}(X')}\right)
$$
using a finite open cover of $X'$ which has a refinement that covers $X''$. Indeed, given the increasing exhaustion $X=\bigcup_{k=1}^{\infty}X_k$, a reference smooth connection $\tilde D_k$ can be found on each $X_{k+1}$ with the property that
$$
\tilde{D}^*(u_k(D)-\tilde{D})=0
$$
for some $u_k\in\Aut^{2,p}(E|_{X_{k+1}},H)$ (cf. \cite[Thm. 8.1 \& Lem. 8.4(iii)]{Weh}). By Lem. \ref{lem-gauge-invariance}, $u_k(D)$ still satisfies $u_k(D)\Lambda F_{u_k(D)}=0$ in the weak sense over the interior of $X_{k+1}$. Choose a sequence $\{X_k^{(l)}\}$ of compact Hermitian manifolds with smooth boundary, such that
$$
X_k\subset\cdots\subset X_k^{l+1}\subset X_k^l\subset\cdots\subset X_k^1=X_{k+1}
$$
and each $X_k^{l+1}$ is in the interior of $X_k^l$. Then an induction shows that $u_k(A)|_{X_k^l}$ is in $W^{l,p}$ for all $l\ge 1$. The global gauge transformation $u\in\Aut_{\loc}^{2,p}(E,H)$ can be constructed using the patching result \cite[Prop. 9.8]{Weh}. By Lem. \ref{lem-gauge-invariance} again, $u(D)$ satisfies
$$
\int_X\langle u(D)\Lambda F_{u(D)},\beta\rangle=\int_X\langle\Lambda F_{u(D)},u(D)^*(\beta)\rangle=0,\quad\forall\beta\in C_0^{\infty}(X,\Theta^1(E,H))
$$
Hence $u(D)\Lambda F_{u(D)}=0$ as desired.
\end{proof}

\begin{rem}
\label{rem-local-regularity}
The above argument by compact exhaustion also shows that in the local setting of Lem. \ref{lem-local-estimate}, the $1$-form $\alpha$ is smooth on every open subset $V\subset\subset U$. This result applies, in particular, when the reference connection $\tilde{D}$ is trivial. We thus see that a connection $D=d+A$ of a trivial bundle over $U\subset\CC^n$ is smooth on $V$, provided that $D\Lambda F_D=0$ in the weak sense over $U$, and $D$ is in the Coulomb gauge, i.e. $d^*A=0$.
\end{rem}

\medskip

\subsection{} Let $(X,g)$ be a compact Hermitian surface. Given a sequence $D_j$ of integrable, unitary connections as in Prop. \ref{prop-weak-compactness}, we obtain a limiting $W^{1,p}_{\loc}$-connection $D_{\infty}$ over $X-Z^{\an}$. Since $Z^{\an}$ is a finite set of points, $X-Z^{\an}$ can be exhausted by an increasing sequence of compact subsets. If, furthermore, the limiting connection $D_{\infty}$ satisfies $D_{\infty}\Lambda F_{D_{\infty}}$ in the weak sense, then Prop. \ref{prop-regularity} applies and gives a gauge transformation $u\in\Aut_{\loc}^{2,p}(E|_{X-Z^{\an}},H)$ such that $\tilde{D}_{\infty}:=u(D_{\infty})$ is smooth, and satisfies $\tilde{D}_{\infty}\Lambda F_{\tilde{D}_{\infty}}=0$ in the strong sense. To obtain a limiting Hermitian vector bundle over $X$, we need to extend the connection $\tilde{D}_{\infty}$ across $Z^{\an}$. This removability of singularity is a purely local property of our equation. Denote by $B(x,r)\subset\CC^2$ the closed ball $B(x,r)=\{y\in\CC^n:|x-y|\le r\}$.

\begin{lem}
\label{lem-removable-singularity}
Let $B(0,2)\subset\CC^2$ be equipped with an Hermitian metric $g$. Let $(E,H)$ be an Hermitian vector bundle over $B(0,2)-\{0\}$. There exists a constant $\varepsilon>0$, such that the following holds:

Let $D$ be any smooth connection on $E$ with $D\Lambda F_D=0$, and $\|F_D\|_{L^2(B(0,2))}\le\varepsilon$. Then there exists a gauge in which $(E,H)$ extends to a smooth bundle $(\overline{E},\overline{H})$ over $B(0,2)$, and $D$ extends to a smooth connection $\overline{D}$ on $\overline{E}$ with $\overline{D}\Lambda F_{\overline{D}}=0$.
\end{lem}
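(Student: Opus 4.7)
The plan is to adapt the classical four-dimensional Yang-Mills removable-singularity theorem (Uhlenbeck) to our equation $D\Lambda F_D=0$. Two features make this adaptation viable: the equation agrees with Yang-Mills to top order, and the norm $\|F_D\|_{L^2}$ is conformally invariant in real dimension four, so the problem is scale-critical near the puncture. I would choose $\varepsilon$ small enough that on every dyadic annulus $A_k:=\{2^{-k-1}\le|x|\le 2^{-k+1}\}$ one has $\|F_D\|_{L^2(A_k)}$ below the Uhlenbeck threshold. Rescaling each $A_k$ to a fixed model annulus and invoking Uhlenbeck's small-curvature gauge theorem produces $u_k\in\Aut^{2,2}(E|_{A_k},H)$ such that $D_k:=u_k(D)=d+A_k$ is in Coulomb gauge with $\|A_k\|_{L^4(A_k)}+\|A_k\|_{W^{1,2}(A_k)}\le C\|F_D\|_{L^2(A_k)}$, the constant $C$ being independent of $k$ by conformal invariance.

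The central ingredient is an $\varepsilon$-regularity-type decay estimate for the curvature. Applying $D$ to \eqref{eq-curvature-commutation-relation-surface} together with $D\Lambda F_D=0$ yields an elliptic second-order equation for $F_D$ whose leading part is the Yang-Mills system. The correction $(\tau+\bar\tau)^*F_D$ equals $-\Lambda F_D(d^*\omega)$ by Lem. \ref{lem-surface-torsion}; in particular its pointwise norm is already $L^\infty$-controlled via Lem. \ref{lem-lambda-f-lp}. Feeding this into the standard Moser/bootstrap iteration for small-energy Yang-Mills in dimension four produces a decay estimate $\|F_D\|_{L^2(B_r)}\le Cr^{2\delta}$ for some $\delta>0$ as $r\to 0$.

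Combined with the annular Coulomb estimates, this decay gives $\sum_k\|A_k\|_{L^p(A_k)}^p<\infty$ for some $p>4$. I would then patch the $u_k$ using transition functions $\sigma_k=u_ku_{k+1}^{-1}$---controlled in $W^{2,p}$ because $d\sigma_k=A_k\sigma_k-\sigma_k A_{k+1}$---to obtain a global unitary trivialization of $(E,H)$ on $B(0,2)\setminus\{0\}$ in which $A$ extends to an $L^p$ $1$-form on $B(0,2)$. Since $\{0\}$ has vanishing $2$-capacity in $\CC^2$, a cutoff argument propagates the weak equation $D\Lambda F_D=0$ across the puncture. Rem. \ref{rem-local-regularity} then produces a further $W^{2,p}$-gauge transformation making $A$ smooth on any $V\subset\subset B(0,2)$, and the extended connection $\overline{D}$ satisfies $\overline{D}\Lambda F_{\overline{D}}=0$ in the strong sense.

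The main obstacle is the $\varepsilon$-regularity step. In the Yang-Mills case it proceeds via either a monotonicity formula or a Moser iteration, and a priori the zero-order torsion term could interfere. However, because Lem. \ref{lem-surface-torsion} reduces the torsion correction to an expression involving only $\Lambda F_D$, which is already $L^\infty$-bounded, the correction is subcritical in the iteration, and standard absorption arguments should suffice. A secondary subtlety is verifying that the weak equation survives the puncture; the $2$-capacity argument above handles this once $F_D$ is known to be in $L^p$ for some $p>2$.
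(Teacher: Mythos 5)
Your overall architecture is the right one and matches the route the paper takes (both follow Uhlenbeck's removable-singularity scheme: establish decay of the curvature near the puncture, pass to good gauges on dyadic annuli, patch, extend the equation across the origin, and invoke Rmk.~\ref{rem-local-regularity} for smoothness). But the central analytic step is missing. The decay $\|F_D\|_{L^2(B_r)}\le Cr^{2\delta}$ that you assert cannot be obtained by ``feeding the elliptic equation for $F_D$ into a Moser/bootstrap iteration'': small-energy Moser iteration yields only the interior sup-bound $|F_D(x)|^2\le C|x|^{-4}\int_{B(0,2|x|)}|F_D|^2$ (this is \eqref{eq-bound-curvature-singularity} in the paper, proved essentially as you suggest, with the torsion handled by $|\Box F_D|=|D(\Lambda F_D(d^*\omega))|\le C|F_D|$). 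Since the problem is scale-critical in real dimension four, this alone gives only $|F_D(x)|\le C\varepsilon|x|^{-2}$, which is not even locally $L^2$ near the origin and yields no $L^p$ control of $A$ for $p>4$; Price-type monotonicity is likewise vacuous at the critical dimension. The decay of the local energy $f(r)=\int_{B(0,r)}|F_D|^2$ must come from a separate mechanism: a differential inequality of the form $(1-C_1f(2r)^{1/2}-C_2r)f(r)\le rf'(r)/4$, derived by integrating by parts against the equation in Uhlenbeck's broken Hodge gauge on the dyadic annuli. This is exactly where the equation $D\Lambda F_D=0$ enters in an essential, non-top-order way: the term $\int_{\fr A_l}\langle A(l),D^*F_D(l)\rangle$, which vanishes identically in the Yang-Mills case, must here be estimated using $D^*F_D=\Lambda F_D(d^*\omega)$ (from \eqref{eq-curvature-commutation-relation-surface}) together with \eqref{eq-bound-curvature-singularity}, producing the extra $-C_2r$ on the left-hand side. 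Integrating the inequality twice gives $f(r)\le Cr^4$, hence $\|F_D\|_{L^\infty}<\infty$, after which the extension proceeds as you describe. None of this computation appears in your proposal, and it is the heart of the proof.

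A secondary inaccuracy: you justify the $L^\infty$ control of the torsion correction by Lem.~\ref{lem-lambda-f-lp}, but that lemma concerns solutions of the flow \eqref{eq-uym} on a compact base and says nothing about a static connection on a punctured ball. What saves you is elementary: $\Lambda$ is a pointwise algebraic operator, so $|\Lambda F_D|\le C|F_D|$ (and indeed $D\Lambda F_D=0$ forces $|\Lambda F_D|$ to be constant); this pointwise bound is all the paper's argument uses.
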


\noindent
Note that in the geometric setting where $\|F_D\|_{L^2(X)}$ is bounded, by shrinking the neighborhood $U$ around each point singularity $z\in X$, we may always assume that $\|F_D\|_{L^2(U)}$ is sufficiently small. By pulling back to $B(0,2)\subset\CC^2$, Lem. \ref{lem-removable-singularity} immediately implies
\begin{prop}
\label{prop-removable-singularity}
Let $(X,g)$ be a compact Hermitian surface, and $Z\subset X$ be a finite set of points. Let $(E,H)$ be an Hermitian vector bundle over $X-Z$, equipped with a connection $D$ satisfying $D\Lambda F_D=0$, and $\|F_D\|_{L^2(X-Z)}$ is bounded. Then there exists a vector bundle $(\overline{E},\overline{H})$ equipped with a connection $\overline{D}$ with $\overline{D}\Lambda F_{\overline{D}}=0$, such that $(\overline{E}|_{X-Z},\overline{H})\cong(E|_{X-Z},H)$, and under this identification, $\overline{D}$ agrees with $D$.\qed
\end{prop}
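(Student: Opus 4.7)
The plan is to reduce Prop.~\ref{prop-removable-singularity} to the purely local Lem.~\ref{lem-removable-singularity} by working independently near each point of the finite set $Z$. Since $Z$ is finite and $X$ is a complex surface, I first choose pairwise disjoint open neighborhoods $U_z$ of each $z\in Z$, together with holomorphic coordinate charts $\phi_z\colon U_z\xrightarrow{\sim} B(0,2)\subset\CC^2$ sending $z$ to the origin. The pullback $\phi_z^*g$ is then a smooth Hermitian metric on $B(0,2)$, and the triples $(\phi_z^*E,\phi_z^*H,\phi_z^*D)$ are smooth objects over $B(0,2)-\{0\}$, with $\phi_z^*D$ still satisfying $(\phi_z^*D)\Lambda F_{\phi_z^*D}=0$ since this equation is diffeomorphism-invariant once we pull back the metric along with the bundle data.

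Next I need to verify the smallness hypothesis of Lem.~\ref{lem-removable-singularity} at each singular point. Because $\|F_D\|_{L^2(X-Z)}$ is finite, dominated convergence gives $\lim_{r\to 0}\|F_D\|_{L^2(U_z\cap B(z,r)-\{z\})}=0$, so after shrinking each $U_z$ (and adjusting the chart $\phi_z$ by a dilation) I may assume $\|\phi_z^*F_D\|_{L^2(B(0,2))}\le\varepsilon$, where $\varepsilon$ is the constant furnished by Lem.~\ref{lem-removable-singularity} applied with the metric $\phi_z^*g$. The lemma then produces, for each $z$, a smooth Hermitian bundle $(\overline{E}_z,\overline{H}_z)$ over all of $B(0,2)$, equipped with a smooth connection $\overline{D}_z$ satisfying $\overline{D}_z\Lambda F_{\overline{D}_z}=0$, together with a gauge identification of $(\overline{E}_z,\overline{H}_z,\overline{D}_z)|_{B(0,2)-\{0\}}$ with $(\phi_z^*E,\phi_z^*H,\phi_z^*D)$.

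Finally, I glue. Transporting each $(\overline{E}_z,\overline{H}_z,\overline{D}_z)$ back to $U_z$ via $\phi_z$ yields bundles-with-connection $(\overline{E}^{(z)},\overline{H}^{(z)},\overline{D}^{(z)})$ on $U_z$ together with isometries $\psi_z\colon\overline{E}^{(z)}|_{U_z-\{z\}}\xrightarrow{\sim}E|_{U_z-\{z\}}$ intertwining connections. Since the $U_z$ are pairwise disjoint, the transition data $\{\psi_z\}$ have no triple overlaps to check, so I may define $(\overline{E},\overline{H})$ on $X$ as the bundle obtained by gluing $E$ on $X-Z$ with each $\overline{E}^{(z)}$ on $U_z$ along $\psi_z$, and similarly patch the connections to get $\overline{D}$. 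By construction $(\overline{E},\overline{H},\overline{D})|_{X-Z}\cong(E,H,D)$, and the equation $\overline{D}\Lambda F_{\overline{D}}=0$ holds on each chart of the cover, hence globally. The only mild subtlety is the need for the gauge modification on $U_z-\{z\}$ when identifying the two bundles, but this is exactly the content of Lem.~\ref{lem-removable-singularity} and poses no obstacle because we only need the identification on disjoint punctured neighborhoods.
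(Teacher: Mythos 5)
Your proposal is correct and is exactly the argument the paper has in mind: the paper states Prop.~\ref{prop-removable-singularity} with only the remark that one shrinks a neighborhood of each $z\in Z$ until $\|F_D\|_{L^2}$ falls below the threshold of Lem.~\ref{lem-removable-singularity}, pulls back to $B(0,2)\subset\CC^2$, and applies that lemma, the gluing over the disjoint punctured neighborhoods being immediate. Your write-up just makes explicit the absolute continuity of the $L^2$ integral and the patching step that the paper leaves tacit.
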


The removability of singularity for Yang-Mills connections is proved by Uhlenbeck in \cite{Uhl_r}. In the similar local setting over $B(0,2)\subset\RR^4$, she uses the Yang-Mills equation to obtain that $\|F_D\|_{L^{\infty}(B(0,2))}$ is finite. Then the extension of $E$ and $D$ follows from an application of the implicit function theorem, together with the regularity result that a Yang-Mills connection in the Coulomb gauge is smooth (cf. \cite[proof of Thm. 4.9]{Uhl_r}). For the equation $D\Lambda F_D=0$, this regularity result is precisely Rmk. \ref{rem-local-regularity}. Therefore we only need to prove that $\|F_D\|_{L^{\infty}(B(0,2))}$ is finite, provided that $D\Lambda F_D=0$.

\begin{lem}
\label{lem-local-f-bound}
Let $B(x_0,2a_0)\subset\CC^2$ be equipped with an Hermitian metric $g$, and $(E,H)$ be an Hermitian vector bundle over $B(x_0,2a_0)$. Then there exists a constant $\varepsilon'$ such that if $D$ is any connection on $E$ with $D\Lambda F_D=0$, and $\|F_D\|_{L^2(B(x_0,a_0))}\le\varepsilon'$, then for all $B(x,a)\subset B(x_0,a_0)$,
\begin{equation}
\label{eq-bound-curvature}
|F_D(x)|^2\le Ca^{-4}\int_{B(x,a)}|F_D|^2
\end{equation}
for some positive constant $C$ depending continuously on $g$.
\end{lem}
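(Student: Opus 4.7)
The plan is to establish a pointwise Bochner-type differential inequality for $|F_D|^2$ from the equation $D\Lambda F_D = 0$, and then apply a standard Moser iteration (mean value inequality) for subsolutions, using the smallness $\|F_D\|_{L^2}\le\varepsilon'$ to absorb the cubic nonlinearity. The argument closely parallels Uhlenbeck's $\varepsilon$-regularity for Yang-Mills connections in \cite{Uhl_r} (cf. also \cite[\S 4]{Don-Kro}); the torsion of the Hermitian metric $g$ contributes only strictly lower-order corrections, made manageable precisely by Lemma \ref{lem-surface-torsion}.

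First I would exploit the equation to simplify $D^*F_D$. Splitting $D\Lambda F_D = 0$ by bidegree gives $D'\Lambda F_D = D''\Lambda F_D = 0$, so \eqref{eq-curvature-commutation-relation-surface} reduces to
$$
D^*F_D = \Lambda F_D\cdot (d^*\omega).
$$
Applying $D$ and using $D\Lambda F_D = 0$ a second time via the Leibniz rule,
$$
DD^*F_D = \Lambda F_D\cdot d(d^*\omega),
$$
which is pointwise bounded by $C_1|F_D|$, with $C_1$ depending continuously on $g$. Combined with Bianchi's identity $DF_D = 0$ (so $D^*DF_D = 0$) and the standard Weitzenb\"ock formula $\nabla^*\nabla F_D = (DD^*+D^*D)F_D + \mathcal R(F_D) + \{F_D, F_D\}$, where $\mathcal R$ is linear in the Riemann curvature of $g$ and $\{\cdot,\cdot\}$ is a universal quadratic bracket, we obtain the pointwise bound $|\nabla^*\nabla F_D|\le C_2|F_D|^2 + C_3|F_D|$. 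Pairing with $F_D$ and invoking Kato's inequality $|\nabla|F_D||\le|\nabla F_D|$ then yields a weak differential inequality of the form
$$
\Box u \le C_4\, u^2 + C_5\, u, \qquad u := |F_D|,
$$
where $\Box$ denotes the Hodge Laplacian on functions (sign convention as in the paper). All constants depend continuously on $g$.

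With this inequality in hand, I would run Moser iteration on concentric balls $B(x,a)\subset B(x_0, a_0)$. Multiplying by $\eta^2 u^{2p-1}$ for a cutoff $\eta$ supported in $B(x,a)$ and integrating by parts gives an energy estimate of the form
$$
\int\eta^2|\nabla(u^p)|^2 \le C p^2\int|\nabla\eta|^2 u^{2p} + Cp\int\eta^2 u^{2p+1} + Cp\int\eta^2 u^{2p}.
$$
The troublesome term $\int\eta^2 u^{2p+1}$ is handled via H\"older plus the Sobolev embedding $W^{1,2}\hookrightarrow L^4$ in real dimension $4$: $\int\eta^2 u^{2p+1}\le\|u\|_{L^2(B(x,a))}\cdot\|\eta u^p\|_{L^4}^2$, so choosing $\varepsilon'$ small enough allows absorption into the left-hand side. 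The linear term is lower order. Iterating in $p$ along a geometric sequence $p_k = 2^k$ yields a reverse H\"older chain and the conclusion
$$
\sup_{B(x,a/2)} u^2 \le C\, a^{-4}\int_{B(x,a)} u^2,
$$
from which a trivial covering of $B(x,a)$ by half-balls (or a rescaling argument) recovers \eqref{eq-bound-curvature} on all $B(x,a)\subset B(x_0, a_0)$.

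The main obstacle is the bookkeeping of the Moser iteration with the cubic term $u^3$; this is entirely classical, and the torsion contributions are strictly lower order (linear rather than cubic in $F_D$) and hence dominated at every step. The continuous dependence of $C$ on $g$ in \eqref{eq-bound-curvature} follows from the continuous dependence on $g$ of the Riemann curvature tensor, the smooth forms $d^*\omega$ and $d(d^*\omega)$, and the Sobolev constant on $B(x_0,a_0)$.
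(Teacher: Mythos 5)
Your proposal follows essentially the same route as the paper: both derive the Bochner inequality $\Delta|F_D|\ge -C_1|F_D|^2-C_2|F_D|$ by bounding the extra term $\Box F_D=DD^*F_D=\Lambda F_D\, D(d^*\omega)$ via the equation and \eqref{eq-curvature-commutation-relation-surface}, and then conclude by a mean value inequality for subsolutions with small $L^2$ curvature. The only cosmetic difference is that the paper delegates the final step to the argument of \cite[Thm.~3.5]{Uhl_r} with $f=|F_D|$ and $b=C_1|F_D|+C_2$, whereas you carry out the Moser iteration explicitly.
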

\begin{proof}
We show that
\begin{equation}
\label{eq-laplacian-f}
\Delta|F_A|\ge -C_1|F_A|^2-C_2|F_A|
\end{equation}
for some positive constants $C_1$, $C_2$ depending continuously on $g$. By letting $f=|F_A|$ and $b=C_1|F_A|+C_2$, the lemma will follow from the argument in \cite[Thm. 3.5]{Uhl_r}. Let $\Box=D^*D+DD^*$ be the exterior derivative Laplacian, and $\Delta=\nabla^*\nabla$ be the Laplacian associated to the full covariant derivative $\nabla$. The computation in \cite[Lem. 3.1]{Uhl_r} shows that
$$
|F_D|\Delta|F_D|\ge\langle F_D,\{F_D,F_D\}\rangle+\langle F_D,\{R,F_D\}\rangle+\langle F_D,\Box F_D\rangle
$$
where $R$ is the curvature $2$-form of the base manifold $B(x_0,2a_0)$, and the notation $\{\cdot,\cdot\}$ denotes a certain bilinear combination with bounded coefficients. Using Bianchi identity, the equation $D\Lambda F_D=0$, and \eqref{eq-curvature-commutation-relation-surface}, we obtain
$$
|\Box F_D|=|DD^*F_D|=|D\left(\Lambda F_D(d^*\omega)\right)|=|\Lambda F_D D(d^*\omega)|\le C_3|F_D|
$$
where $C_3$ is a constant depending continuously on $g$. Thus $|F_D|\Delta|F_D|\ge -C_1|F_D|^3-C_2|F_D|^2$. \eqref{eq-laplacian-f} follows by dividing $|F_D|$.
\end{proof}

Consider now an Hermitian vector bundle $(E,H)$ defined over $B(0,2)-\{0\}$. As in \cite[Lem. 4.5]{Uhl_r}, the bound \eqref{eq-bound-curvature} on the curvature form implies that if $D\Lambda F_D=0$ and $\|F_D\|_{L^2(B(0,2))}\le\varepsilon'$, then
\begin{equation}
\label{eq-bound-curvature-singularity}
|F_D(x)|^2\le C|x|^{-4}\int_{B(0,2|x|)}|F_D|^2
\end{equation}
for all $x\in B(0,2)$ with $0<|x|\le 1$. By the discussion before Lem. \ref{lem-local-f-bound}, the following result will complete the proof of Lem. \ref{lem-removable-singularity}.

\begin{lem}
There exists a constant $\varepsilon>0$ such that if $D$ is any connection on $E$ with $D\Lambda F_D=0$, and $\|F_D\|_{L^2(B(0,2))}\le\varepsilon$, then $\|F_D\|_{L^{\infty}(B(0,2))}$ is finite.
\end{lem}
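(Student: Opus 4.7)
The plan is to exploit a key algebraic consequence of the equation $D\Lambda F_D = 0$: since $D$ is unitary,
\[
d|\Lambda F_D|^2 = 2\re\langle D\Lambda F_D,\,\Lambda F_D\rangle = 0,
\]
and the connectedness of $B(0,2)\setminus\{0\}$ forces $|\Lambda F_D| \equiv c$ for some constant $c \ge 0$. The hypothesis $\|F_D\|_{L^2(B(0,2))}\le\varepsilon$ together with the pointwise inequality $|\Lambda F_D|\le|F_D|$ yields $c \le C_0\varepsilon$, which can be made arbitrarily small by shrinking $\varepsilon$.

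With $|\Lambda F_D|$ constant, equation \eqref{eq-curvature-commutation-relation-surface} simplifies to the \emph{uniform} bound $|D^*F_D| \le Cc$, so $D$ is uniformly close to being Yang-Mills. Moreover, since $\Lambda F_D$ is $D$-parallel, $|\Box F_D| = |D(\Lambda F_D \otimes d^*\omega)| = |\Lambda F_D\otimes d(d^*\omega)| \le Cc$ is also uniformly bounded. Revisiting the computation in Lem.~\ref{lem-local-f-bound} with this improved estimate in place of $|\Box F_D|\le C|F_D|$ sharpens the subharmonic inequality to
\[
\Delta|F_D| \ge -C_1|F_D|^2 - C_2 c,
\]
in which the previous linear-in-$|F_D|$ term has become a small constant.

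With these preparations in hand, the task is to upgrade \eqref{eq-bound-curvature-singularity} to an $L^\infty$ bound. Setting $\phi(r) := \int_{B(0,r)}|F_D|^2$, the pointwise bound \eqref{eq-bound-curvature-singularity} together with $\phi(r)\to 0$ yields only $|F_D(x)| = o(|x|^{-2})$, so what is needed is the Morrey-type decay $\phi(r) = O(r^4)$. To prove this I would parallel Uhlenbeck's treatment in \cite{Uhl_r}: on each dyadic annulus $A_k = \{2^{-k-1}\le|x|\le 2^{-k}\}$ the $L^2$ curvature is small, so Uhlenbeck's gauge-fixing theorem produces a Coulomb gauge for $D|_{A_k}$; after conformally rescaling $A_k$ to a fixed reference annulus, the rescaled connection satisfies a Yang-Mills equation perturbed by a term of order $c\cdot 2^{-k}$, to which the standard elliptic bootstrap (together with the sharpened subharmonic inequality above) applies, producing uniform bounds on the rescaled curvature and hence $\phi(r)=O(r^4)$. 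Plugging this back into \eqref{eq-bound-curvature-singularity} gives $|F_D(x)|^2\le C$ for $|x|$ small, and combined with the smoothness of $D$ on $\overline{B(0,2)}\setminus B(0,1/2)$ this yields $\|F_D\|_{L^\infty(B(0,2))} < \infty$.

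The principal obstacle is the Morrey decay of the previous paragraph. The pointwise bound \eqref{eq-bound-curvature-singularity} alone falls short by two powers of $|x|$, and no purely iterative use of \eqref{eq-bound-curvature-singularity} appears to close the gap. What makes the decay extractable here is precisely the uniform bound $|D^*F_D|\le Cc$, which turns $D$ into a bounded perturbation of a Yang-Mills connection, with perturbation shrinking under conformal rescaling. Carefully tracking this perturbation of size $c$ through Uhlenbeck's Coulomb gauge, the patching across annuli, and the elliptic estimates for the Hodge system $\{DF_D=0,\ D^*F_D=O(c)\}$ is the main work, although the underlying scheme is Uhlenbeck's.
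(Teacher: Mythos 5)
Your opening observation is correct, and the paper does not use it: since $D$ is unitary, $d|\Lambda F_D|^2=2\,\re\langle D\Lambda F_D,\Lambda F_D\rangle=0$, so $|\Lambda F_D|\equiv c$ on the connected punctured ball, with $c\le C\varepsilon$ by the $L^2$ hypothesis; consequently $|D^*F_D|\le Cc$ and $|\Box F_D|\le Cc$ are genuinely uniform, which sharpens the computation in Lem.~\ref{lem-local-f-bound}. Your overall strategy --- reduce everything to the Morrey decay $f(r):=\int_{B(0,r)}|F_D|^2=O(r^4)$ and feed it back into \eqref{eq-bound-curvature-singularity} --- is also the paper's.

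The gap is in how you propose to obtain that decay. You assert that putting each dyadic annulus in Coulomb gauge, rescaling to a reference annulus, and running an elliptic bootstrap ``produces uniform bounds on the rescaled curvature and hence $\phi(r)=O(r^4)$.'' But a uniform bound on the rescaled curvature is scale-invariant information: $|\tilde F_k|\le C$ on the reference annulus translates back to $|F_D(y)|\le C|y|^{-2}$, which is exactly what \eqref{eq-bound-curvature-singularity} already gives and which yields no decay of $\phi$ whatsoever (in real dimension $4$, $\int_{B_r}|y|^{-4}$ is logarithmically divergent). To get $\phi(r)=O(r^4)$ one needs a genuine hole-filling/decay estimate, i.e.\ something of the form $\phi(r)\le\theta\,\phi(2r)$ with $\theta<1$, or equivalently a differential inequality forcing a positive power of decay. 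This is precisely the step the paper supplies and your sketch omits: following Uhlenbeck's broken Hodge gauge construction, one integrates $\int_{B_r}|F_D|^2$ by parts annulus by annulus and arrives at
$$
\bigl(1-C_1f(2r)^{1/2}-C_2r\bigr)f(r)\le\frac{rf'(r)}{4},
$$
where the new term $C_2r$ comes from estimating $\sum_l\int_{\fr A_l}\langle A(l),D^*F_D(l)\rangle$ using $D^*F_D=\Lambda F_D(d^*\omega)$ and \cite[Cor.~2.9]{Uhl_r}. Integrating once gives $f(r)\le r^{4\gamma}e^{4C_2}\varepsilon^2$ with $\gamma=1-C_1\varepsilon>0$, and feeding this back in and integrating again gives $f(r)\le C_7r^4$. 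Your uniform bound $|D^*F_D|\le Cc$ would mildly simplify the torsion term in this inequality, but it does not substitute for it; the differential inequality (or an equivalent hole-filling argument) is the irreducible core of the proof, and the mechanism you offer in its place cannot produce the required decay.
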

\begin{proof}
Write
$$
f(r):=\int_{B(0,r)}|F_D|^2,\quad 0\le r\le 1
$$
We first prove the differential inequality
\begin{equation}
\label{eq-differential-inequality}
(1-C_1f(2r)^{1/2}-C_2r)f(r)\le\frac{rf'(r)}{4}
\end{equation}
for all $0<r\le 1$, where $C_1$, $C_2$ are positive constants. This is analogous to \cite[Prop. 4.7]{Uhl_r} except for the extra term due to torsion. To use the gauge theoretical results there, we view $D=d+A$ as a principal bundle connection over $\fr B(0,2)$, the underlying Riemannian ball of $B(0,2)$. As in \cite{Uhl_r}, we apply the broken Hodge gauge and get $A(l)$, $F_D(l)$ defined on the annuli $\fr A_l:=\{x:2^{-l-1}r \le |x|\le 2^{-l}r\}$ for $l=0,1,2,\cdots$. Let $S_l:=\{x:|x|=2^{-l}r\}$ for $l=0,1,2,\cdots$. For $0<r\le 1$, the computation in \cite{Uhl_r} yields
\begin{equation}
\label{eq-differential-inequality-estimate}
f(r)\le (2-C_3f(2r)^{\frac{1}{2}})^{-1}\left(\frac{rf'(r)}{2}\right)+C_4f(2r)^{\frac{1}{2}}f(r)+\sum_{l\ge 1}\bigg|\int_{\fr A_l}\langle A(l),D^*F_D(l)\rangle\bigg|
\end{equation}
for positive constants $C_3$, $C_4$. The third term (vanishing in the Yang-Mills case \cite{Uhl_r}) admits an estimate using \eqref{eq-curvature-commutation-relation-surface} and the equation $D\Lambda F_D=0$:
$$
\bigg|\int_{\fr A_l}\langle A(l),D^*F_D(l)\rangle\bigg|=\bigg|\int_{\fr A_l}\langle A(l),(d^*\omega)\Lambda F_D(l)\rangle\bigg|\le C_5\left(\int_{\fr A_l}|A(l)|^2\right)^{\frac{1}{2}}\left(\int_{\fr A_l}|F_D(l)|^2\right)^{\frac{1}{2}}
$$
We then use \cite[Cor. 2.9]{Uhl_r} to estimate the first factor on the right-hand-side, and then apply \eqref{eq-bound-curvature-singularity} to get
$$
\bigg|\int_{\fr A_l}\langle A(l),D^*F_D(l)\rangle\bigg|\le C_6r\int_{\fr A_l}|F_D(l)|^2
$$
for positive constant $C_6$. Here $r$ comes from scaling the inequality in \cite[Cor. 2.9]{Uhl_r}. Substitute this in \eqref{eq-differential-inequality-estimate}, and we find
$$
f(r)\le (2-C_3f(2r)^{\frac{1}{2}})^{-1}\left(\frac{rf'(r)}{2}\right)+C_4f(2r)^{\frac{1}{2}}f(r)+C_6rf(r)
$$
Rearranging this to obtain
$$
(2-C_3f(2r)^{\frac{1}{2}})\left(1-C_4f(2r)^{\frac{1}{2}}-C_6r\right)f(r)\le\frac{rf'(r)}{2}
$$
Then expand the left-hand-side, and discard positive terms. Rename constants to obtain \eqref{eq-differential-inequality}.

By the choice of $\varepsilon$, we may further assume that $\gamma:=1-C_1\varepsilon>0$. We first use $f(2r)^{\frac{1}{2}}\le\varepsilon$ to obtain from \eqref{eq-differential-inequality}:
$$
(\gamma-C_2r)f(r)\le\frac{rf'(r)}{4},\quad\text{hence}\quad\frac{4\gamma}{r}-4C_2\le\frac{f'(r)}{f(r)}
$$
Integrate from $r$ to $1$, exponentiate and we obtain
$$
f(r)\le r^{4\gamma}e^{4C_2(1-r)}f(1)\le r^{4\gamma}e^{4C_2}\varepsilon^2,\quad\text{for}\quad 0<r\le 1
$$
Use this estimate along with \eqref{eq-differential-inequality},
$$
\frac{4(1-C_1(2r)^{2\gamma}e^{2C_2}\varepsilon)}{r}-4C_2\le\frac{f'(r)}{f(r)}
$$
Again integrate from $r$ to $1$ and exponentiate,
$$
f(r)\le r^4e^{C_1\frac{2^{2C_2}}{2C_2}(1-r^{2C_2})e^{2C_2}C_1+4C_2(1-r)}f(1)\le C_7r^4,\quad\text{for}\quad 0<r\le 1
$$
where $C_7=e^{C_1\frac{2^{2C_2}}{2C_2}e^{2C_2}\varepsilon+4C_2}\varepsilon^2>0$. Finally, by \eqref{eq-bound-curvature-singularity}, we have
$$
|F_D(x)|^2\le K|x|^{-4}\int_{B(0,2|x|)}|F_D|^2=K|x|^{-4}f(2|x|)\le C_8,\quad\text{for}\quad |x|\le\frac{1}{2}
$$
Since $F_D$ is continuous on the compact set $\{x:\frac{1}{2}\le|x|\le2\}$, the norm $\|F_D\|_{L^{\infty}(B(0,2))}$ is finite.
\end{proof}

\medskip

\subsection{}
We summarize the previous analytic results in the following
\begin{prop}
\label{prop-uhlenbeck-limit}
Let $(E,H)$ be an Hermitian vector bundle over a compact Hermitian surface $(X,g)$. Assume $D_j$ is a sequence of integrable, unitary connections on $E$ such that $\|\Lambda F_{D_j}\|_{L^{\infty}}$ and $\|F_{D_j}\|_{L^2}$ are uniformly bounded. Assume furthermore that $\|D_j\Lambda F_{D_j}\|_{L^2}\rightarrow 0$.

Fix some $p>4$. Then $D_j$ converges weakly in $W^{1,p}$ along some subsequence $D_{j_k}$ to an Uhlenbeck limit $D_{\infty}$ (cf. Defn. \ref{defn-uhlenbeck-limit} and Rmk. \ref{rem-defn-uhlenbeck-limit}), on some Hermitian vector bundle $(E_{\infty},H_{\infty})$ over $X$. Furthermore, the triple $(E_{\infty},D_{\infty}'',H_{\infty})$ extends to a smooth holomorphic vector bundle over $X$, and the extended connection $D_{\infty}$ satisfies $D_{\infty}\Lambda F_{D_{\infty}}=0$.
\end{prop}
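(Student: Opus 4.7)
The plan is to string together the preparatory results of this section in sequence. First, I apply Prop.~\ref{prop-weak-compactness} to extract a subsequence $D_{j_k}$ together with unitary gauge transformations $\tau_{j_k}\in\Aut^{2,p}_{\loc}(E|_{X-Z^{\an}},H)$ such that $\tilde{D}_{j_k}:=\tau_{j_k}(D_{j_k})\rightharpoonup D_{\infty}$ weakly in $W^{1,p}_{\loc}$, with the curvatures $F_{\tilde{D}_{j_k}}\rightharpoonup F_{D_{\infty}}$ weakly in $L^p_{\loc}$ (as noted in the proof of Prop.~\ref{prop-weak-compactness}); the limit $D_{\infty}$ is automatically integrable and unitary. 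By Rmk.~\ref{rem-defn-uhlenbeck-limit}, after passing to a further subsequence I obtain a smooth Hermitian bundle $(E_{\infty},H_{\infty})$ over $X-Z^{\an}$ equipped with the $W^{1,p}_{\loc}$-connection $D_{\infty}$.

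Second, I establish the weak equation $D_{\infty}\Lambda F_{D_{\infty}}=0$ over $X-Z^{\an}$. Since unitary gauge transformations preserve pointwise norms of endomorphism-valued forms, the hypothesis $\|D_{j_k}\Lambda F_{D_{j_k}}\|_{L^2(X)}\to 0$ yields $\|\tilde{D}_{j_k}\Lambda F_{\tilde{D}_{j_k}}\|_{L^2(K)}\to 0$ for every compact $K\subset X-Z^{\an}$. Because $p>4$ equals the real dimension of $X$, Morrey's embedding makes $W^{1,p}_{\loc}\hookrightarrow C^0_{\loc}$ compact, so after passing to a subsequence the local connection forms $\tilde{A}_{j_k}$ converge uniformly on compact subsets to $A_{\infty}$. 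Pairing this strong convergence with the weak convergence $\Lambda F_{\tilde{D}_{j_k}}\rightharpoonup\Lambda F_{D_{\infty}}$ in $L^p_{\loc}$, the product terms $[\tilde{A}_{j_k},\Lambda F_{\tilde{D}_{j_k}}]$ converge weakly to $[A_{\infty},\Lambda F_{D_{\infty}}]$ in $L^p_{\loc}$, while $d(\Lambda F_{\tilde{D}_{j_k}})\to d(\Lambda F_{D_{\infty}})$ in distributions trivially. Hence $\tilde{D}_{j_k}\Lambda F_{\tilde{D}_{j_k}}\rightharpoonup D_{\infty}\Lambda F_{D_{\infty}}$ in distributions, and the distributional limit must vanish by the $L^2_{\loc}$-convergence to zero. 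Testing against compactly supported $\beta$ gives \eqref{eq-weak-chs}.

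Third, I apply the regularity result Prop.~\ref{prop-regularity} on the base $X-Z^{\an}$, which is exhausted by the compact subsets $X-\bigcup_iB(z_i,1/k)$ (for $z_i\in Z^{\an}$), each of which deformation retracts to $X-Z^{\an}$. This produces a gauge transformation $u\in\Aut^{2,p}_{\loc}(E_{\infty},H_{\infty})$ such that $u(D_{\infty})$ is smooth on $X-Z^{\an}$ and satisfies the equation pointwise. Finally, I invoke Prop.~\ref{prop-removable-singularity} to extend $(E_{\infty},u(D_{\infty}))$ across $Z^{\an}$ to a smooth bundle with connection over all of $X$; the hypothesis $\|F_{u(D_{\infty})}\|_{L^2(X-Z^{\an})}<\infty$ follows from weak lower semicontinuity and the gauge-invariance of the curvature norm: for any compact $K\subset X-Z^{\an}$, $\|F_{D_{\infty}}\|_{L^2(K)}\le\liminf_k\|F_{\tilde{D}_{j_k}}\|_{L^2(K)}\le\sup_j\|F_{D_j}\|_{L^2(X)}<\infty$, and the bound is uniform in $K$.

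The main obstacle is Step 2: passing the nonlinear equation $D\Lambda F_D=0$ to the weak limit. Both the curvature $F=dA+A\wedge A$ and the covariant derivative $D=d+[A,\cdot]$ contain a nonlinear dependence on $A$, and controlling these under mere weak convergence requires that strong convergence of the connection forms be paired with weak convergence of the curvatures. It is precisely for this compactness argument that the hypothesis $p>4$ is essential: it upgrades weak $W^{1,p}_{\loc}$-convergence of $\tilde{A}_{j_k}$ to uniform convergence on compact subsets of $X-Z^{\an}$ via Morrey's embedding in real dimension $4$.
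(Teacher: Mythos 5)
Your proposal is correct and follows essentially the same route as the paper: Prop.~\ref{prop-weak-compactness} for the subsequential Uhlenbeck limit, the $p>4$ Sobolev embedding to get $C^0_{\loc}$-convergence of the connections and hence pass the equation to the weak limit, then Prop.~\ref{prop-regularity} and Prop.~\ref{prop-removable-singularity}. The only cosmetic difference is that the paper verifies \eqref{eq-weak-chs} by showing $D_{\infty}\Lambda F_{D_{j_k}}\rightarrow 0$ in $L^2_{\loc}$ and integrating by parts against $D_{\infty}^*\beta$, whereas you pass $\tilde{D}_{j_k}\Lambda F_{\tilde{D}_{j_k}}$ to its distributional limit directly; these are equivalent.
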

\begin{proof}
The proof is essentially the same as \cite[Prop. 2.11]{Das-Wen}. The hypothesis $p>4$ gives the Sobolev embedding $W^{1,p}\hookrightarrow C^0$. Hence the subsequence $D_{j_k}$ produced by Prop. \ref{prop-weak-compactness} converges to some $D_{\infty}$ strongly in $C^0_{\loc}$ away from a bubbling set $Z^{\an}$. Together with $\Lambda F_{D_{j_k}}\rightharpoonup\Lambda F_{D_{\infty}}$ weakly in $L^p_{\loc}$, and $\|D_j\Lambda F_{D_j}\|_{L^2}\rightarrow 0$, we can deduce that $D_{\infty}\Lambda F_{D_{j_k}}=D_{j_k}\Lambda F_{D_{j_k}}+[D_{\infty}-D_{j_k},\Lambda F_{D_{j_k}}]\rightarrow 0$ in $L^2_{\loc}$. Thus for all $\beta\in C^{\infty}_0(X,\Theta^1(E,H))$,
$$
\int_X\langle\Lambda F_{D_{\infty}},D_{\infty}^*\beta\rangle=\lim_{k\rightarrow\infty}\int_X\langle \Lambda F_{D_{j_k}},D_{\infty}^*\beta\rangle=\lim_{k\rightarrow\infty}\int_X\langle D_{\infty}\Lambda F_{D_{j_k}},\beta\rangle=0
$$
In other words, $D_{\infty}$ satisfies $D_{\infty}\Lambda F_{D_{\infty}}=0$ in the weak sense. The result then follows from regularity (Prop. \ref{prop-regularity}) and removability of singularity (Prop. \ref{prop-removable-singularity}) for the weak equation $D_{\infty}\Lambda F_{D_{\infty}}=0$.
\end{proof}

\noindent
As in \cite[Cor. 2.12]{Das-Wen}, it also follows that along the subsequence $j_k$,
\begin{equation}
\label{eq-lambda-f-lp-limit}
\Lambda F_{D_{j_k}}\rightarrow\Lambda F_{D_{\infty}},\quad\text{for all}\quad 1\le p<\infty
\end{equation}
\begin{prop}
Let $D_{\infty}$ be some Uhlenbeck limit from Prop. \ref{prop-uhlenbeck-limit}, on Hermitian vector bundle $(E_{\infty},H_{\infty})$. Then $D_{\infty}$ is an integrable, unitary connection, and the holomorphic vector bundle $(E_{\infty},D''_{\infty},H_{\infty})$ has a holomorphic splitting
$$
(E_{\infty},D''_{\infty},H_{\infty})=\bigoplus_i (Q_{\infty}^{(i)},D_{\infty}''^{(i)},H_{\infty}^{(i)})
$$
where $H_{\infty}^{(i)}$ is the Hermitian-Einstein metric on $(Q_{\infty}^{(i)},D_{\infty}''^{(i)})$.
\end{prop}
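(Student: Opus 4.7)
The plan is to first verify that integrability and unitarity pass to the Uhlenbeck limit $D_\infty$ produced by Proposition \ref{prop-uhlenbeck-limit}, and then to apply the standard spectral decomposition of the parallel self-adjoint endomorphism $i\Lambda F_{D_\infty}$, as already invoked in the opening paragraph of \S 2.1.

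For the first step I would observe that unitarity is the pointwise algebraic relation $dh = A_j^t\cdot h + h\cdot \bar A_j$ already noted to be stable under weak $W^{1,p}_{\loc}$-limits in the proof of Proposition \ref{prop-weak-compactness}, while integrability $F_D^{0,2}=0$ passes to the limit because, with $p>4$, $F_{D_{j_k}}\rightharpoonup F_{D_\infty}$ weakly in $L^p_{\loc}$. After Propositions \ref{prop-regularity} and \ref{prop-removable-singularity} have smoothly extended $D_\infty$ across $Z^{\an}$ to a connection on the smooth Hermitian bundle $(E_\infty, H_\infty)$ over all of $X$, both properties hold globally by continuity, and the strong equation $D_\infty\Lambda F_{D_\infty}=0$ holds pointwise everywhere.

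For the second step, set $s:=i\Lambda F_{D_\infty}$, a smooth $H_\infty$-self-adjoint endomorphism. The equation $D_\infty s=0$ says $s$ is parallel; its $(0,1)$-part gives that $s$ is a holomorphic endomorphism of $(E_\infty, D_\infty'')$, while parallelism forces its eigenvalues to be constant. Self-adjointness then produces an $H_\infty$-orthogonal, $D_\infty$-invariant eigenspace decomposition $E_\infty = \bigoplus_i Q_\infty^{(i)}$ into holomorphic subbundles, on each of which $s$ acts by a real scalar $\lambda_i$; this is exactly the Hermitian--Einstein condition $i\Lambda F_{(D_\infty''^{(i)},\,H_\infty^{(i)})}=\lambda_i\,\bb I$, which is the content of \cite[\S IV, Thm. 3.27]{Kob} already cited at the start of the section. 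I expect no serious obstacle here: the genuinely analytic content---regularity, removability of singularities, and curvature bounds---is all bundled into Proposition \ref{prop-uhlenbeck-limit}, so the residual work is merely to confirm that the two algebraic properties survive the weak limit and the smooth extension, after which the splitting is an immediate invocation of Kobayashi's theorem.
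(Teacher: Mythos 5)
Your proposal is correct and follows essentially the same route as the paper: integrability and unitarity are inherited from the weak limit in Prop.~\ref{prop-weak-compactness} (and preserved under the unitary gauge transformations and the extension across $Z^{\an}$), and the splitting is exactly the invocation of \cite[\S IV, Thm.~3.27]{Kob} from the equation $D_\infty\Lambda F_{D_\infty}=0$. Your spelled-out spectral decomposition of the parallel endomorphism $i\Lambda F_{D_\infty}$ is just the content of that cited theorem, so there is no substantive difference.
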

\begin{proof}
The fact that $D_{\infty}$ is integrable and unitary follows from Prop. \ref{prop-weak-compactness}, and the fact that integrability and unitarity are both preserved under a unitary gauge transformation. The fact that $D_{\infty}\Lambda F_{D_{\infty}}=0$ shows that $H_{\infty}^{(i)}$ is a critical Hermitian structure on $(E_{\infty},D''_{\infty})$, and thus the holomorphic splitting follows from \cite[\S IV, Thm. 3.27]{Kob}.
\end{proof}

\begin{rem}
Given a sequence $D_j$ satisfying the hypothesis of Prop. \ref{prop-uhlenbeck-limit}, its Uhlenbeck limit $D_{\infty}$ is \emph{a prior} non-unique, and the bubbling set $Z^{\an}$ depends on the subsequence $D_{j_k}$.
\end{rem}

\medskip

\subsection{}
To conclude this section, we study the behavior of other Hermitian Yang-Mills type functionals considered by Daskalopoulos and Wentworth \cite{Das-Wen} along the flow \eqref{eq-uym}, as an application of the previous analytic results. 

%The content of Lem. \ref{lem-lambda-f-lp} can be generalized to a considerably wider class of functionals defined using $\Lambda F_{D_t}$. The proof exploits the maximum principle for the operator $P=i\Lambda\bar{\partial}\partial$ in essentially the same way as in Lem. \ref{lem-lambda-f-lp}.

Let $(E,H)$ be an Hermitian vector bundle of rank $r$, over a compact Gauduchon surface $(X,g)$. Recall that we have the normalization $\vol(X)=2\pi$. For any $\alpha\ge 1$, define a function $\varphi_{\alpha}:\mathfrak{u}(r)\rightarrow\mathbb{R}$ by $\varphi_{\alpha}(\fr a):=\sum_{j=1}^r|\lambda_j|^{\alpha}$ if $\fr a$ has eigenvalues $i\lambda_1,\cdots,i\lambda_r$. Given an integrable, unitary connection $D$ and $N\in\mathbb{R}$, define (following notations of \cite{Das-Wen})
$$
\HYM_{\alpha,N}(D):=\int_X\varphi_{\alpha}(\Lambda F_D+iN\mathbb{I})
$$
and set $\HYM_{\alpha}(D):=\HYM_{\alpha,0}(D)$. Note that the functional $\HYM_2(D)=\|\Lambda F_D\|_{L^2}^2$ and will be abbreviated as $\HYM(D)$. By abuse of notation, for any $r$-tuple $\vec{\mu}$ of real numbers, set $\HYM_{\alpha,N}(\vec{\mu})=2\pi\varphi_{\alpha}(i(\vec{\mu}-N))$, where $\vec{\mu}-N=(\mu_1-N,\cdots,\mu_r-N)$ is identified with the diagonal matrix with entries $\mu_1-N,\cdots,\mu_r-N$. Therefore, if $i\Lambda F_D$ has constant eigenvalues $\mu_1,\cdots,\mu_r$, then $\HYM_{\alpha,N}(D)=\HYM_{\alpha,N}(\vec{\mu})$ for $\vec{\mu}=(\mu_1,\cdots,\mu_r)$.

\begin{prop}
\label{prop-limit-other-hym}
Let $D_t$ be a solution of \eqref{eq-uym} over a compact Gauduchon surface $(X,g)$, and $D_{\infty}$ be an Uhlenbeck limit along any minimizing sequence $t_j\rightarrow\infty$. Then for any $\alpha\ge 1$ and $N\in\mathbb{R}$,
$$
\lim_{t\rightarrow\infty}\HYM_{\alpha,N}(D_t)=\HYM_{\alpha,N}(D_{\infty})
$$
\end{prop}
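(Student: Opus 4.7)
The plan is to combine two independent ingredients: (a) convergence of $\HYM_{\alpha,N}(D_{t_{j_k}})$ to $\HYM_{\alpha,N}(D_\infty)$ along the subsequence $t_{j_k}$ used to produce $D_\infty$, and (b) monotonicity of the full function $t \mapsto \HYM_{\alpha,N}(D_t)$ along the flow. Together, (b) guarantees that $\lim_{t\to\infty}\HYM_{\alpha,N}(D_t)$ exists, and (a) identifies this limit as $\HYM_{\alpha,N}(D_\infty)$.

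For (a), I would start from \eqref{eq-lambda-f-lp-limit}, which gives $\Lambda F_{D_{t_{j_k}}} \to \Lambda F_{D_\infty}$ in $L^p_{\loc}(X \setminus Z^{\an})$ for every $1 \le p < \infty$. After passing to a further subsequence, we may assume pointwise a.e.\ convergence on $X \setminus Z^{\an}$. The function $\Phi \colon \mathfrak{u}(r) \to \mathbb{R}$ defined by $\Phi(\mathfrak{a}) := \varphi_\alpha(\mathfrak{a} + iN\mathbb{I}) = \sum_{j=1}^r |\lambda_j(\mathfrak{a}) + N|^\alpha$ is continuous, and by Lemma \ref{lem-lambda-f-lp} the family $\Lambda F_{D_{t_{j_k}}}$ is uniformly bounded in $L^\infty$. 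Hence $\Phi(\Lambda F_{D_{t_{j_k}}})$ is dominated by a constant on $X \setminus Z^{\an}$, a set of full measure, so dominated convergence yields (a).

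For (b), the idea is to extend the argument of Lemma \ref{lem-lambda-f-lp} from $|\cdot|^2$ to the spectral functional $\Phi$. Since $\alpha \ge 1$, $x \mapsto |x + N|^\alpha$ is convex, so $\Phi$ is convex and $\mathrm{Ad}$-invariant on $\mathfrak{u}(r)$. The target is the pointwise inequality
$$
\left(\frac{d}{dt} + P\right)\Phi(\Lambda F_{D_t}) \le 0,
$$
derived from \eqref{eq-flow-lambda-f} together with a Hessian computation for $\Phi$ in a local eigenframe of $\Lambda F_{D_t}$. Integrating over $X$ and using that $\int_X Pu\,dV_g = 0$ for every smooth $u$ on a compact Gauduchon surface (a direct Stokes computation from $\partial\bar\partial\omega = 0$, essentially the same input as \eqref{eq-gauduchon-perpendicularity}) then yields $\frac{d}{dt}\HYM_{\alpha,N}(D_t) \le 0$, completing (b).

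The main obstacle is the pointwise inequality in (b). In Lemma \ref{lem-lambda-f-lp} the specific quadratic form $|\cdot|^2$ made the algebra collapse to the clean non-positive term $-2|D'_t\Lambda F_{D_t}|^2$. For general convex spectral $\Phi$, the Hessian involves divided differences of $t \mapsto |t + N|^\alpha$ along the eigenvalues of $\Lambda F_{D_t}$, and one must verify that the off-diagonal terms produced by the commutator $D'_t D''_t - D''_t D'_t$ in \eqref{eq-flow-lambda-f} combine with the second-order contributions from $P\Phi(\Lambda F_{D_t})$ to yield a non-positive quantity. In the cases $\alpha < 2$ or coinciding eigenvalues, $\Phi$ fails to be $C^2$, and a smooth strictly convex approximation such as $\Phi_\varepsilon(\mathfrak{a}) := \sum_j ((\lambda_j(\mathfrak{a}) + N)^2 + \varepsilon)^{\alpha/2}$, followed by passage to $\varepsilon \to 0^+$, will likely be required to make the argument rigorous.
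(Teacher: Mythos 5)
Your proposal follows the paper's proof essentially verbatim: your step (a) is the content of \cite[Lem.~2.23]{Das-Wen} applied to \eqref{eq-lambda-f-lp-limit}, and your step (b) is exactly Lemma \ref{lem-other-hym-functional}, which the paper proves as you outline --- via smooth convex $\mathrm{Ad}$-invariant approximations of $\varphi_{\alpha}$, the pointwise inequality $\left(\tfrac{d}{dt}+P\right)\varphi(\Lambda F_{D_t}+iN\mathbb{I})\le 0$, and integration over the compact Gauduchon surface. The Hessian step you flag as the main obstacle is handled in the paper by the $\mathrm{Ad}$-invariance identities \eqref{eq-other-hym-functional-2}, which convert the ordinary derivatives of $f=\Lambda F_{D_t}+iN\mathbb{I}$ into covariant ones so that convexity alone yields $i\Lambda\bar{\partial}\partial(\varphi(f))\le\varphi'_{f}(i\Lambda D''D'f)$, exactly the resolution you anticipate.
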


\noindent
Indeed, such an Uhlenbeck limit exists: by Lem. \ref{lem-lambda-f-lp} and Prop. \ref{prop-bound-f}, $\|F_{D_t}\|_{L^2}$ and $\|\Lambda F_{D_t}\|_{L^{\infty}}$ are uniformly bounded along any solution $D_t$ of \eqref{eq-uym}. Furthermore, along a minimizing sequence (cf. Cor. \ref{cor-time-sequence}), $\|D_{t_j}\Lambda F_{D_{t_j}}\|_{L^2}\rightarrow 0$. Hence Prop. \ref{prop-uhlenbeck-limit} applies and an Uhlenbeck limit $D_{\infty}$ exists on some limiting bundle $(E_{\infty},H_{\infty})$. To prove Prop. \ref{prop-limit-other-hym}, we first make the following generalization of Lem. \ref{lem-lambda-f-lp}. It is proved for the Yang-Mills flow in \cite[Prop. 2.25]{Das-Wen}.

\begin{lem}
\label{lem-other-hym-functional}
Let $D_t$ be a solution of \eqref{eq-uym}. Then for all $\alpha\ge 1$ and $N\in\mathbb{R}$, $\HYM_{\alpha,N}(D_t)$ is non-increasing as a function of $t$.
\end{lem}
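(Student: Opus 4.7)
The plan is to imitate the proof of Lem.~\ref{lem-lambda-f-lp}, replacing the smooth function $|\cdot|^2$ by $\varphi_\alpha(\,\cdot\,+iN\mathbb{I})$, and to handle the non-smoothness of $\varphi_\alpha$ via approximation, in the spirit of \cite[Prop.~2.25]{Das-Wen}.

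First I would set $\psi_t := \Lambda F_{D_t} + iN\mathbb{I}$. Since $iN\mathbb{I}$ is $D_t$-covariantly constant, \eqref{eq-flow-lambda-f} gives
\begin{equation*}
\frac{d}{dt}\psi_t = \frac{i}{2}\Lambda\bigl((D_t'D_t'' - D_t''D_t')\psi_t\bigr).
\end{equation*}
Next, for any smooth, $\mathrm{Ad}(U(r))$-invariant convex function $\Phi:\mathfrak{u}(r)\to\mathbb{R}$, I would compute $(\frac{d}{dt}+P)\Phi(\psi_t)$ pointwise via the chain rule. The term $\frac{d}{dt}\Phi(\psi_t)=\langle\nabla\Phi(\psi_t),\frac{d}{dt}\psi_t\rangle$ combines with the first-order piece of $P\Phi(\psi_t)= i\Lambda\bar\partial\partial\Phi(\psi_t)$; following the algebra of Lem.~\ref{lem-lambda-f-lp} verbatim, these first-order pieces match to produce a single quadratic Hessian term $-\,\langle \nabla^2\Phi(\psi_t)(D_t'\psi_t),\,D_t''\psi_t\rangle$, which is pointwise non-positive by the convexity of $\Phi$ together with the skew-Hermitian relation $D_t''\psi_t = -(D_t'\psi_t)^*$. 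As in Prop.~\ref{prop-bound-f}, the Gauduchon case produces two additional torsion contributions; by the chain rule these take the form $\tfrac{i}{2}\langle \bar\partial\Phi(\psi_t),\partial^*\omega\rangle$ and $\tfrac{i}{2}\langle \partial\Phi(\psi_t),\bar\partial^*\omega\rangle$.

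I would then integrate over $X$. Since $\partial\bar\partial\omega = 0$, a direct integration by parts shows $\int_X Pf\,dV = 0$ for any smooth scalar $f$; applied to $f = \Phi(\psi_t)$ this eliminates the $P\Phi(\psi_t)$ integral, while \eqref{eq-gauduchon-perpendicularity} (and its conjugate) kills the two torsion integrals. Altogether,
\begin{equation*}
\frac{d}{dt}\int_X \Phi(\psi_t)\,dV \,=\, -\int_X\langle \nabla^2\Phi(\psi_t)(D_t'\psi_t),\,D_t''\psi_t\rangle\,dV \,\le\, 0.
\end{equation*}

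Finally, for arbitrary $\alpha\ge 1$, the map $\mathfrak{a}\mapsto\varphi_\alpha(\mathfrak{a}+iN\mathbb{I})$ is continuous, convex, and $\mathrm{Ad}(U(r))$-invariant, but in general only $C^0$. I would approximate it uniformly on compact subsets of $\mathfrak{u}(r)$ by smooth convex $\mathrm{Ad}$-invariant $\Phi_\epsilon$ (radially mollifying in $\mathfrak{u}(r)$ preserves convexity, invariance, and smoothness), apply the preceding step to each $\Phi_\epsilon$, and let $\epsilon\to 0$. The main obstacle is the pointwise bookkeeping in the middle paragraph: in the K\"ahler setting of \cite[Prop.~2.25]{Das-Wen} the torsion contributions are absent, but in our Gauduchon setting one must recognize the extra terms as pure $\bar\partial$- and $\partial$-derivatives of the scalar $\Phi(\psi_t)$ paired against $\partial^*\omega$ and $\bar\partial^*\omega$, so that \eqref{eq-gauduchon-perpendicularity} can be applied, mirroring the role it plays in Prop.~\ref{prop-bound-f} relative to the K\"ahler case.
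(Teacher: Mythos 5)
Your overall strategy is the paper's: approximate $\varphi_\alpha(\cdot+iN\mathbb{I})$ by smooth convex $\mathrm{Ad}$-invariant functions, establish the pointwise inequality $(\frac{d}{dt}+P)\Phi(\psi_t)\le 0$ by letting convexity kill the Hessian term and the flow equation \eqref{eq-flow-lambda-f} cancel the first-order term, and pass to the limit. Your way of concluding — integrating over $X$ and using $\int_X Pf\,dV=\int_X if\,\bar\partial\partial\omega=0$ on a Gauduchon surface — is a clean and correct completion for the integral functional (the paper invokes the maximum principle for $P$ at this point, but for $\HYM_{\alpha,N}$, which is an integral rather than a sup, your integration step is what is actually needed).

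Two corrections. First, the ``two additional torsion contributions'' you introduce do not exist. In Prop.~\ref{prop-bound-f} the torsion terms come from the identity \eqref{eq-curvature-commutation-relation-surface} for $D^*F_D$, i.e.\ from handling the full curvature; here nothing of the sort occurs: one only applies $P=i\Lambda\bar\partial\partial$ to the scalar $\Phi(\psi_t)$ and uses \eqref{eq-flow-lambda-f}, neither of which involves $\tau$. Indeed, this is exactly the advantage of working with $P$ and $\Lambda F$ rather than with $d^*d$ and $F$ — compare Lem.~\ref{lem-lambda-f-lp}, whose computation you claim to follow verbatim and which produces no torsion. Since the terms you manufacture are claimed to integrate to zero anyway, your conclusion is unharmed, but the slip indicates the pointwise computation was not actually carried out. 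Second, ``compute via the chain rule'' elides the one genuinely nontrivial step: the naive chain rule gives $d(\Phi(\psi))=\Phi'_{\psi}(d\psi)$ with the \emph{ordinary} derivative of the local matrix-valued representative of $\psi$, whereas the cancellation against $\frac{d}{dt}\psi_t=\frac{i}{2}\Lambda((D'D''-D''D')\psi_t)$ requires the \emph{covariant} derivatives $D'\psi$, $D''D'\psi$ inside $\Phi'$ and $\Phi''$. The passage from one to the other is exactly where the $\mathrm{Ad}$-invariance of $\Phi$ is used (the first and second derivatives of $\Phi$ annihilate the $\mathrm{ad}$-directions generated by the connection form), and this occupies the bulk of the paper's proof. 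You correctly insist that the mollification preserve $\mathrm{Ad}$-invariance, so the ingredient is present, but the argument should say why it is needed.
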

\begin{proof}
As in \cite{Das-Wen}, let $\varphi_{\alpha,\rho}:\fr u(r)\rightarrow \RR$ $(0<\rho\le 1)$ be a sequence of smooth convex $\mathrm{ad}$-invariant functions converging uniformly to $\varphi_{\alpha}$, on compact subsets of $\fr u(r)$ as $\rho\rightarrow 0$. As $X$ is compact, it suffices by maximum principle for the operator $P=i\Lambda\bar{\partial}\partial$ (cf. \cite[\S7.2]{Lub}) to show that
\begin{equation}
\label{eq-other-hym-functional}
\left(\frac{d}{dt}+P\right)\varphi_{\alpha,\rho}(\Lambda F_{D_t}+iN\mathbb I)\le 0
\end{equation}
We write $\varphi:=\varphi_{\alpha,N}$ and $f:=\Lambda F_{D_t}+iN\mathbb{I}$ for notational simplicity. Since $P=\frac{i}{2}\Lambda(\bar{\partial}\partial-\partial\bar{\partial})$, we will first compute $\bar{\partial}\partial(\varphi(f(x)))$. Locally, $f$ can be identified as a smooth function $f:U\rightarrow\fr u(r)$ for a domain $U\subset\CC^n$. Using $\fr u(r)\cong\mathbb{R}^{r(r-1)}$, we write $f(x)=(f^1(x),\cdots,f^{r(r-1)}(x))$. For any fixed $A\in\fr u(r)$, the adjoint
\begin{equation}
\label{eq-other-hym-functional-1}
\mathrm{ad}_A(f)^i=[A,f]^i=\sum_{j=1}^{r(r-1)}a^i{}_jf^j
\end{equation}
for some matrix $a=(a^i{}_j)$. Note that since $\varphi$ is $\mathrm{Ad}$-invariant, by picking a path $g(t)\in U(r)$ with $g(0)=\mathbb{I}$ and $g'(0)=A$, we deduce via differentiating $\varphi\circ\mathrm{Ad}_{g(t)}=\varphi$ with respect to $t$:
\begin{equation}
\label{eq-other-hym-functional-2}
\sum_{i=1}^{r(r-1)}(\partial_i\varphi)_{\xi}\cdot a^i{}_j=0,\quad \sum_{i=1}^{r(r-1)}(\partial_k\partial_i\varphi)_{\xi}\cdot a^i{}_j=0,\quad\forall\xi\in\fr u(r)
\end{equation}
for all $1\le j\le r(r-1)$, where the second equation follows from differentiating again with respect to the $k$th coordinate of $\fr u(r)$. Using \eqref{eq-other-hym-functional-2}, for all $1\le l\le 2n$,
$$
\frac{\partial}{\partial x_l}(\varphi(f(x)))=\sum_{i=1}^{r(r-1)}(\partial_i\varphi)_{f(x)}\cdot\left(\frac{\partial f^i}{\partial x_l}\right)_x=\sum_{i,j=1}^{r(r-1)}(\partial_i\varphi)_{f(x)}\cdot\left(\left(\frac{\partial f^i}{\partial x_l}\right)_x+a^i{}_j\cdot (f^j)_x\right)
$$
In particular, if we let $a^i{}_j$ be associated to the adjoint action by the connection form $A(\frac{\partial}{\partial x_l})$ as in \eqref{eq-other-hym-functional-1}, we find
$$
d(\varphi(f(x)))=\sum_{i=1}^{r(r-1)}(\partial_i\varphi)_{f(x)}\cdot\left((Df^i)_x\right)
$$
Taking the $(1,0)$-part and further differentiating by $\bar{\partial}$, we obtain
\begin{align*}
\bar{\partial}\partial(\varphi(f(x)))=&\sum_{i,j=1}^{r(r-1)}(\partial_j\partial_i\varphi)_{f(x)}\cdot(\bar{\partial}f^j)_x\wedge (D'f^i)_x+\sum_{i=1}^{r(r-1)}(\partial_i\varphi)_{f(x)}\cdot(\bar{\partial}D'f^i)_x\\
=&\sum_{i,j=1}^{r(r-1)}(\partial_j\partial_i\varphi)_{f(x)}\cdot(D''f^j)_x\wedge (D'f^j)_x+\sum_{i=1}^{r(r-1)}(\partial_i\varphi)_{f(x)}\cdot (D''D'f^i)_x
\end{align*}
where for the last equality, we have used both formulae in \eqref{eq-other-hym-functional-2}. Now, since $\varphi$ is convex, the matrix $(\partial_l\partial_k\varphi)_{f(x)}$ is positive-definite, and we have
$$
i\Lambda\bar{\partial}\partial(\varphi(f(x)))\le\varphi_{f(x)}'(i\Lambda D''D'f(x)),\quad\text{similarly}\quad -i\Lambda\partial\bar{\partial}(\varphi(f(x)))\le -\varphi_{f(x)}'(i\Lambda D'D''f(x))
$$
Thus
$$
P\left(\varphi(f(x))\right)\le\varphi_{f(x)}'\left(\frac{i}{2}\Lambda(D''D'-D'D'')f(x)\right)=-\varphi_{f(x)}'\left(\frac{d}{dt}f(x)\right)=-\frac{d}{dt}\left(\varphi(f(x))\right)
$$
where we have used \eqref{eq-flow-lambda-f} for the first equality.
\end{proof}

\begin{proof}[Proof of Prop. \ref{prop-limit-other-hym}]
Since $D_{\infty}$ is the Uhlenbeck limit along the subsequence $D_{t_j}$, it follows from \eqref{eq-lambda-f-lp-limit} that $\Lambda F_{D_{t_j}}\rightarrow\Lambda F_{D_{\infty}}$ in $L^{\alpha}$ for all $1\le\alpha<\infty$. By \cite[Lem. 2.23]{Das-Wen}, this convergence is equivalent to $\HYM_{\alpha,N}(D_{t_j})\rightarrow\HYM_{\alpha,N}(D_{\infty})$. Since $\HYM_{\alpha,N}(D_t)$ is non-increasing along the flow, we see that $\HYM_{\alpha,N}(D_t)\rightarrow\HYM_{\alpha,N}(D_{\infty})$.
\end{proof}

\medskip

\section{Identifying the Uhlenbeck limit}
\subsection{} The method of identifying the Uhlenbeck limit along a minimizing sequence of connections $D_j$ (cf. Cor. \ref{cor-time-sequence}) is directly adapted from \cite{Das-Wen}. We perform no more than the act of removing the K\"{a}hler condition from the arguments in \cite{Das-Wen}.

We first give an overview of the Harder-Narasimhan theory of coherent sheaves on Gauduchon manifolds. The main reference for this part is \cite[\S V]{Kob}. Although the results there are stated for K\"{a}hler manifolds, the reason for this constraint lies only in the definition of degree for a coherent sheaf. The theory itself is purely algebraic, and all proofs extend verbatim to Gauduchon manifolds.

Let $(X,g)$ be a compact Gauduchon manifold of dimension $n$ (cf. \S\ref{sec-gauduchon-surface}), and $(E,\bar{\partial})$ be a holomorphic vector bundle over $X$. Define the \emph{degree} of $(E,\bar{\partial})$ as
$$
\deg(E):=\int_Xc_1(E,H)\wedge\omega^{n-1}
$$
where $c_1(E,H)$ is the Chern form associated to any Hermitian metric $H$ on $E$. The degree of $E$ is well-defined independently of $H$, because for any other Hermitian metric $H'$, the difference $c_1(E,H)-c_1(E,H')$ is $\bar{\partial}\partial$-exact, whereas $\omega^{n-1}$ is $\partial\bar{\partial}$-closed. Let $F$ be a coherent sheaf over $\mathcal{O}_X$ of rank $r$, where $\mathcal{O}_X$ is the sheaf of holomorphic functions on $X$. Then its \emph{degree} is defined by
$$
\deg(F)=\deg(\det(F))
$$
where $\det(F)=\left(\bigwedge^rF\right)^{**}$ is the determinant line bundle of $F$ (cf. \cite[\S V.6]{Kob}). This definition is, of course, compatible with the degree of a holomorphic vector bundle, regarded as a coherent sheaf. The \emph{slope} of $F$ is defined as the degree over rank ratio:
$$
\mu(F):=\frac{\deg(F)}{\rank(F)}
$$
Due to the normalization $\vol(X)=2\pi$, $\mu(F)$ agrees with the Hermitian-Einstein constant of $F$. From now on, by a coherent sheaf we will always mean a coherent sheaf over $\mathcal{O}_X$. A torsion-free coherent sheaf $F$ is \emph{semi-stable} if
$$
\mu(S)\le\mu(F),\quad\text{for any coherent subsheaf $S$ of $F$}
$$
and is \emph{stable} if
$$
\mu(S)<\mu(F),\quad\text{for any coherent subsheaf $S$ of $F$ with strictly smaller rank}
$$
In fact, any coherent subsheaf of a torsion-free coherent sheaf is necessarily torsion-free. In checking the (semi-)stability of a coherent sheaf, we may restrict our attention to saturated subsheaves. A coherent subsheaf $S$ of a torsion-free sheaf $F$ is a \emph{saturated subsheaf} if both $S$ and the quotient $Q=F/S$ are torsion-free.
\begin{lem}[cf. \cite{Kob}, \S V, Prop. 7.6]
A torsion-free coherent sheaf $F$ is semi-stable if for any saturated subsheaf $S$ of $F$, there holds $\mu(S)\le\mu(F)$, and is stable if for any saturated subsheaf $S$ of $F$ with strictly smaller rank, there holds $\mu(S)<\mu(F)$.
\end{lem}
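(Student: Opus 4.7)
The plan is to reduce checking (semi-)stability against arbitrary coherent subsheaves to checking against saturated ones, by passing from any given subsheaf $S \subset F$ to its saturation $\bar S$, and showing that slope can only increase under saturation.

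More concretely, given a coherent subsheaf $S$ of $F$, I would set $\bar S := \pi^{-1}(T(F/S))$, where $\pi \colon F \to F/S$ is the quotient and $T(F/S)$ denotes the torsion subsheaf of $F/S$. Then $F/\bar S \cong (F/S)/T(F/S)$ is torsion-free by construction, and $\bar S \subset F$ is torsion-free since $F$ is. So $\bar S$ is a saturated subsheaf of $F$. Moreover, the quotient $\bar S / S \cong T(F/S)$ is a torsion sheaf, which in particular has rank zero, so $\rank(\bar S) = \rank(S)$.

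The crux of the argument is the inequality $\deg(\bar S) \ge \deg(S)$. From the short exact sequence $0 \to S \to \bar S \to \bar S/S \to 0$ and the multiplicativity of $\det$ on short exact sequences of coherent sheaves (cf. \cite[\S V.6]{Kob}), one gets $\det(\bar S) \cong \det(S) \otimes \det(\bar S/S)$, so it suffices to prove $\deg(\det(T)) \ge 0$ for any torsion sheaf $T$. The determinant of a torsion sheaf is the line bundle associated to an effective codimension-one cycle (its ``divisorial support'' with multiplicities), which represents a class admitting a closed positive $(1,1)$-current representative. Since $\omega^{n-1}$ is a positive $(n-1,n-1)$-form and the pairing is well-defined on $\partial\bar\partial$-closed test forms by the Gauduchon condition, this pairing is non-negative. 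Hence $\mu(\bar S) \ge \mu(S)$.

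With this in place, the proof concludes quickly. For semi-stability, the hypothesis applied to $\bar S$ gives $\mu(S) \le \mu(\bar S) \le \mu(F)$. For stability, one additionally uses that $\rank(\bar S) = \rank(S) < \rank(F)$, so the hypothesis for saturated subsheaves of strictly smaller rank gives $\mu(S) \le \mu(\bar S) < \mu(F)$. The main obstacle to watch for is the positivity statement $\deg(\det(T)) \ge 0$ for torsion $T$: in the Gauduchon (non-K\"ahler) setting one cannot invoke K\"ahler positivity arguments directly, but the classical fact that $\det(T)$ is represented by an effective Weil divisor, together with the Gauduchon property $\partial\bar\partial(\omega^{n-1}) = 0$ (which makes the degree independent of the Hermitian representative and compatible with integration over an effective cycle), is exactly what is needed.
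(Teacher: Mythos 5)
Your proposal is correct and is essentially the argument the paper relies on: the lemma is cited to Kobayashi (\S V, Prop.~7.6), whose proof is exactly the saturation argument you give --- replace $S$ by $\bar S=\pi^{-1}(T(F/S))$, note $\rank(\bar S)=\rank(S)$, and use $\det(\bar S)\cong\det(S)\otimes\det(\bar S/S)$ together with the fact that the determinant of a torsion sheaf is the line bundle of an effective divisor. The only point requiring care in the Gauduchon setting is the non-negativity of $\deg(\det(\bar S/S))$, which you correctly reduce to the positivity of $\int_D\omega^{n-1}$ for an effective divisor $D$ (well-defined via Poincar\'e--Lelong precisely because $\partial\bar\partial(\omega^{n-1})=0$), so your route matches the paper's.
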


\noindent
If $(E,\bar{\partial},H)$ is an Hermitian holomorphic vector bundle over $X$, we may study its saturated subsheaves in a more analytic manner. A \emph{$W^{1,2}$-subbundle} of $E$ is a self-adjoint section $\pi\in W^{1,2}(X,\End(E))$ satisfying $\pi^2=\pi$ and $(1-\pi)\bar{\partial}(\pi)=0$. To any saturated subsheaf $S$ of $E$, we may associate the $H$-orthogonal projection $\pi$ onto $S$, well-defined outside codimension $2$. Note that we are using the fact that any torsion-free coherent sheaf $F$ is locally free outside an analytic subset $Z(F)\subset X$ of codimension at least $2$ (\cite[\S V, Cor. 5.15]{Kob}). It follows from \cite[\S4]{Uhl-Yau} that $\pi$ is a $W^{1,2}$-subbundle of $E$.

Now, assume that $E$ is only a torsion-free coherent sheaf. A \emph{maximal semi-stable subsheaf} of $E$ is a saturated subsheaf $E_1$ such that, for every coherent subsheaf $F$ of $E$,
\begin{enumerate}[(i)]
	\item $\mu(F)\le \mu(E_1)$, and
	\item $\rank(F)\le\rank(E_1)$ if $\mu(F)=\mu(E_1)$.
\end{enumerate}
It follows that $E_1$ is semi-stable. \cite[\S V, Lem. 7.17]{Kob} shows that $E_1$ exists and is unique. By successively taking maximal semi-stable subsheaves, we obtain

\begin{prop}[cf. \cite{Kob}, \S V, Thm. 7.15]
\label{prop-hn-filtration}
Given a torsion-free coherent sheaf $E$, there exists a unique filtration by subsheaves
$$
0=\bb F_0^{\hn}(E)\subset \bb F_1^{\hn}(E)\subset\cdots\subset\bb F_{l-1}^{\hn}(E)\subset\bb F_l^{\hn}(E)=E
$$
such that, for $1\le i\le l-1$, the quotient $E/\bb F_i^{\hn}(E)$ is torsion-free and $\bb F_i^{\hn}(E)/\bb F_{i-1}^{\hn}(E)$ is the maximal semi-stable subsheaf of $E/\bb F_{i-1}^{\hn}(E)$.
\end{prop}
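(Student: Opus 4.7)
The plan is to build the filtration inductively on $\rank(E)$, relying on the existence and uniqueness of the maximal semi-stable subsheaf stated just before the proposition (\cite[\S V, Lem. 7.17]{Kob}). That lemma is the only analytic input, and its proof depends only on the degree theory over $(X,g)$, which has already been set up in the Gauduchon setting.

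For existence, I would argue as follows. The base case $\rank(E) = 1$ is trivial: $E$ is then stable and the filtration is $0 \subset E$. For the inductive step, let $\bb F_1^{\hn}(E)$ be the unique maximal semi-stable subsheaf of $E$. Since $\bb F_1^{\hn}(E)$ is saturated by definition, the quotient $E/\bb F_1^{\hn}(E)$ is a torsion-free coherent sheaf of strictly smaller rank, to which the inductive hypothesis applies, yielding a Harder-Narasimhan filtration
\[
0 = \overline{\bb F}_0 \subset \overline{\bb F}_1 \subset \cdots \subset \overline{\bb F}_{l-1} = E/\bb F_1^{\hn}(E).
\]
Pulling back along the quotient map $E \twoheadrightarrow E/\bb F_1^{\hn}(E)$ defines $\bb F_i^{\hn}(E)$ for $i \ge 2$. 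That $\bb F_i^{\hn}(E)/\bb F_{i-1}^{\hn}(E)$ is the maximal semi-stable subsheaf of $E/\bb F_{i-1}^{\hn}(E)$ and that $E/\bb F_i^{\hn}(E)$ is torsion-free then follow by construction. For uniqueness, given any other filtration satisfying the stated conditions, its first nontrivial term must coincide with $\bb F_1^{\hn}(E)$ by the uniqueness of the maximal semi-stable subsheaf; induction on the quotient $E/\bb F_1^{\hn}(E)$ then forces the remaining terms to agree.

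The only genuine obstacle is verifying that the existence of the maximal semi-stable subsheaf transfers from the Kähler to the Gauduchon setting, namely the proof of \cite[\S V, Lem. 7.17]{Kob}. It proceeds by showing (i) that the slopes $\mu(F)$ of saturated subsheaves $F \subset E$ are uniformly bounded above, and (ii) that among subsheaves attaining the maximal slope, one of maximal rank exists and is unique (by considering sums and using additivity of $\deg$). Both steps rely only on well-definedness, additivity, and boundedness of the degree functional, all of which are available in the Gauduchon setting thanks to $\partial\bar\partial(\omega^{n-1}) = 0$ together with the $\bar\partial\partial$-exactness of $c_1(E,H) - c_1(E,H')$. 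Since no Kähler-specific ingredient enters, the entire argument transfers \emph{verbatim}, and the proposition follows.
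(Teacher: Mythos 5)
Your proposal is correct and matches the paper's treatment: the paper likewise obtains the filtration ``by successively taking maximal semi-stable subsheaves,'' citing \cite[\S V, Lem.~7.17 and Thm.~7.15]{Kob} and observing that the argument is purely algebraic once the degree is well-defined on a Gauduchon manifold. Your closing paragraph on why the maximal semi-stable subsheaf transfers to the Gauduchon setting is exactly the justification the paper gives in the surrounding discussion.
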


\noindent
This filtration is called the \emph{Harder-Narasimhan filtration} (abbreviated as \emph{HN filtration}). Note that each $\bb F^{hn}_i(E)$ is a saturated subsheaf of $E$, and that the initial subsheaf $\bb F_1^{\hn}(E)$ is the unique subsheaf of $E$ with this degree and rank.

\begin{lem}
Given a torsion-free coherent sheaf $E$, let $Q_i=\bb F_i^{hn}(E)/\bb F_{i-1}^{hn}(E)$ be the $i$th quotient in the HN filtration. Then $\mu(Q_i)>\mu(Q_{i+1})$ for all $i$.
\end{lem}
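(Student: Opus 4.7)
The plan is to compare the $(i{+}1)$st quotient $Q_{i+1}$ with the maximal semi-stable subsheaf $Q_i$ of $E/\mathbb{F}_{i-1}^{\hn}(E)$, by lifting $Q_{i+1}$ through the short exact sequence
\[
0 \longrightarrow Q_i \longrightarrow E/\mathbb{F}_{i-1}^{\hn}(E) \longrightarrow E/\mathbb{F}_i^{\hn}(E) \longrightarrow 0.
\]
Concretely, let $\tilde Q$ denote the preimage of $Q_{i+1} \subset E/\mathbb{F}_i^{\hn}(E)$ in $E/\mathbb{F}_{i-1}^{\hn}(E)$. Since $E/\mathbb{F}_{i-1}^{\hn}(E)$ is torsion-free by Prop.~\ref{prop-hn-filtration}, so is $\tilde Q$, and we get an exact sequence $0 \to Q_i \to \tilde Q \to Q_{i+1} \to 0$.

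The next step is to apply the defining maximality property of $Q_i$: it is the unique saturated subsheaf of $E/\mathbb{F}_{i-1}^{\hn}(E)$ satisfying (i) $\mu(F) \le \mu(Q_i)$ for every coherent subsheaf $F$, and (ii) $\operatorname{rank}(F) \le \operatorname{rank}(Q_i)$ whenever equality holds in (i). Since $Q_{i+1} \ne 0$, the subsheaf $\tilde Q$ strictly contains $Q_i$ in rank, so property (ii) forces
\[
\mu(\tilde Q) < \mu(Q_i).
\]

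Finally, additivity of degree and rank in short exact sequences gives
\[
\mu(\tilde Q) \;=\; \frac{\operatorname{rank}(Q_i)\,\mu(Q_i) + \operatorname{rank}(Q_{i+1})\,\mu(Q_{i+1})}{\operatorname{rank}(Q_i) + \operatorname{rank}(Q_{i+1})}.
\]
Plugging this into the strict inequality $\mu(\tilde Q) < \mu(Q_i)$ and clearing denominators yields $\operatorname{rank}(Q_{i+1})\,\mu(Q_{i+1}) < \operatorname{rank}(Q_{i+1})\,\mu(Q_i)$, hence $\mu(Q_{i+1}) < \mu(Q_i)$. No analytic input is needed; the argument is purely formal given the existence and uniqueness statement of Prop.~\ref{prop-hn-filtration}. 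The only subtle point, and the one to state carefully, is that the maximality condition defining $Q_i$ is tested against \emph{all} coherent subsheaves of $E/\mathbb{F}_{i-1}^{\hn}(E)$ (not merely saturated ones), so it legitimately applies to the lift $\tilde Q$; but this is built into the definition of maximal semi-stable subsheaf recalled just before Prop.~\ref{prop-hn-filtration}.
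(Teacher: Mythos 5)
Your proposal is correct and follows essentially the same route as the paper: your lift $\tilde Q$ is exactly $\bb F_{i+1}^{\hn}(E)/\bb F_{i-1}^{\hn}(E)$, and the paper likewise combines the maximality property of $Q_i$ (giving $\mu(\tilde Q)<\mu(Q_i)$ strictly, via the rank clause) with the weighted-average identity from the short exact sequence $0\to Q_i\to\tilde Q\to Q_{i+1}\to 0$. No discrepancies to report.
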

\begin{proof}
Since $\bb F_i^{\hn}(E)/\bb F_{i-1}^{\hn}(E)$ is the maximal semi-stable subsheaf of $E/\bb F_{i-1}^{\hn}(E)$, there holds
$$
\mu(\bb F_i^{\hn}(E)/\bb F_{i-1}^{\hn}(E))>\mu(\bb F_{i+1}^{\hn}(E)/\bb F_{i-1}^{\hn}(E))
$$
From the sheaf exact sequence
$$
0\rightarrow\bb F_i^{\hn}(E)/\bb F_{i-1}^{\hn}(E)\rightarrow\bb F_{i+1}^{\hn}(E)/\bb F_{i-1}^{\hn}(E)\rightarrow\bb F_{i+1}^{\hn}(E)/\bb F_i^{\hn}(E)\rightarrow 0
$$
we deduce that $\mu(Q_i)>\mu(\bb F_{i+1}^{\hn}(E)/\bb F_{i-1}^{\hn}(E))>\mu(Q_{i+1})$.
\end{proof}

\noindent
The \emph{Harder-Narasimhan type} (abbreviated as \emph{HN type}) of a holomorphic vector bundle $(E,\bar{\partial})$ of rank $r$ is the $r$-tuple given by
\begin{equation}
\label{eq-def-hn-type}
\mathrm{HN}(E,\bar{\partial}):=(\underbrace{\mu(Q_1),\cdots,\mu(Q_1)}_\text{$\rank(Q_1)$ times},\cdots,\underbrace{\mu(Q_l),\cdots,\mu(Q_l)}_\text{$\rank(Q_l)$ times})
\end{equation}
where $Q_i:=\bb F_i^{\hn}(E)/\bb F_{i-1}^{\hn}(E)$ and $\{\bb F_i^{\hn}(E)\}_{1\le i\le l}$ is the HN filtration of $E$.

In the case where $E$ is a holomorphic vector bundle equipped with an Hermitian metric $H$, for any increasing filtration $\mathcal{F}=\{F_i\}_{i=1}^l$ of $E$ by saturated subsheaves, corresponding to $W^{1,2}$-subbundles $\{\pi_i\}_{i=1}^l$, and any $l$-tuple of real numbers $(\mu_1,\cdots,\mu_l)$, we define a bounded $W^{1,2}$-endomorphism of $E$ by $\Psi(\mathcal{F},(\mu_1,\cdots,\mu_l),H)=\sum_{i=1}^l\mu_i(\pi_i-\pi_{i-1})$. The \emph{Harder-Narasimhan projection} $\Psi^{\hn}(\bar{\partial},H)$ is the endomorphism defined as above for $\mathcal{F}=\{\bb F_i^{\hn}(E)\}_{i=1}^l$ and $\mu_i=\mu\left(\bb F_i^{\hn}(E)/\bb F_{i-1}^{\hn}(E)\right)$. Following \cite{Das-Wen}, for any $\delta>0$ and $1\le p\le\infty$, we define an \emph{$L^p$-$\delta$-approximate critical Hermitian structure} on $E$ to be a smooth metric $H$ such that
\begin{equation}
\|i\Lambda F_{(\bar{\partial},H)}-\Psi^{\hn}(\bar{\partial},H)\|_{L^p}\le\delta
\end{equation}

Any semi-stable sheaf $E$ admits a Seshadri filtration, whose successive quotients are stable and has the same slope as $E$ (\cite[V. Thm. 7.18]{Kob}). In contrast to the HN filtration, the Seshadri filtration is not unique, although its associated graded object is. Putting together the Harder-Narasimhan filtration and the Seshadri filtration, we have
\begin{prop}
Given a torsion-free coherent sheaf $E$, there exists a double filtration $\{\bb F^{\hns}_{i,j}(E)\}$ with the following properties:
\begin{equation*}
\bb F^{\hn}_{i-1}(E)=\bb F^{\hns}_{i,0}(E)\subset \bb F^{\hns}_{i,1}(E)\subset\cdots\subset \bb F^{\hns}_{i,l_i}(E)=\bb F^{\hn}_i(E)
\end{equation*}
and the successive quotients $Q_{i,j}=\bb F^{\hns}_{i,j}(E)/\bb F^{\hns}_{i,j-1}(E)$ are stable, torsion-free sheaves. Moreover, $\mu(Q_{i,j})=\mu(Q_{i,j+1})$ and $\mu(Q_{i,j})>\mu(Q_{i+1,j})$ for all $i$ and $j$. The associated graded object
$$
\Gr^{\hns}(E)=\bigoplus_{i=1}^l\bigoplus_{j=1}^{l_i}Q_{i,j}
$$
is uniquely determined by the isomorphism class of $E$.\qed
\end{prop}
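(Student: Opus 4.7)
The plan is to build the double filtration by refining each semi-stable graded piece of the Harder-Narasimhan filtration via a Seshadri filtration, and then to argue that while the Seshadri refinement itself is non-unique, its associated graded object is.

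First, invoke Prop.~\ref{prop-hn-filtration} to obtain the HN filtration $0 = \bb F^{\hn}_0(E) \subset \bb F^{\hn}_1(E) \subset \cdots \subset \bb F^{\hn}_l(E) = E$, whose successive quotients $Q_i := \bb F^{\hn}_i(E)/\bb F^{\hn}_{i-1}(E)$ are, by construction, torsion-free and semi-stable, and satisfy $\mu(Q_i) > \mu(Q_{i+1})$ by the preceding lemma. Next, for each fixed $i$, the sheaf $Q_i$ is semi-stable and torsion-free, so by Seshadri's theorem (\cite[V, Thm.~7.18]{Kob}) there exists a filtration
$$
0 = S_{i,0} \subset S_{i,1} \subset \cdots \subset S_{i,l_i} = Q_i
$$
by saturated subsheaves whose successive quotients $S_{i,j}/S_{i,j-1}$ are stable and torsion-free with the common slope $\mu(Q_i)$. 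Note that while each individual filtration is non-unique, the collection of stable quotients (as an isomorphism class) is determined by $Q_i$; this is again \cite[V, Thm.~7.18]{Kob}.

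Now I would assemble the double filtration by pullback. Let $q_i \colon \bb F^{\hn}_i(E) \to Q_i$ be the canonical quotient map, and define
$$
\bb F^{\hns}_{i,j}(E) := q_i^{-1}\bigl(S_{i,j}\bigr), \qquad 0 \le j \le l_i.
$$
Since pullback along a surjection of torsion-free sheaves preserves the property of being a saturated subsheaf, each $\bb F^{\hns}_{i,j}(E)$ is saturated in $E$, and by construction $\bb F^{\hns}_{i,0}(E) = \bb F^{\hn}_{i-1}(E)$ and $\bb F^{\hns}_{i,l_i}(E) = \bb F^{\hn}_i(E)$. The third isomorphism theorem gives
$$
Q_{i,j} = \bb F^{\hns}_{i,j}(E)/\bb F^{\hns}_{i,j-1}(E) \cong S_{i,j}/S_{i,j-1},
$$
so each $Q_{i,j}$ is stable and torsion-free with slope $\mu(Q_{i,j}) = \mu(Q_i)$. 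The equalities $\mu(Q_{i,j}) = \mu(Q_{i,j+1})$ and inequalities $\mu(Q_{i,j}) > \mu(Q_{i+1,j'})$ follow immediately from $\mu(Q_i) > \mu(Q_{i+1})$.

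Finally, for the uniqueness of $\Gr^{\hns}(E) = \bigoplus_{i,j} Q_{i,j}$ as an isomorphism class: the HN filtration is unique by Prop.~\ref{prop-hn-filtration}, so the collection $\{Q_i\}$ is determined up to isomorphism by $E$. The Seshadri graded $\bigoplus_j Q_{i,j}$ of each $Q_i$ is in turn determined by the isomorphism class of $Q_i$ (\cite[V, Thm.~7.18]{Kob}), and so the total direct sum $\bigoplus_{i,j} Q_{i,j}$ depends only on the isomorphism class of $E$. The main obstacle here is essentially bookkeeping: one must verify that the Seshadri filtration in the Gauduchon setting indeed produces a well-defined graded object, but as noted at the start of \S3, all the algebraic arguments of \cite[\S V]{Kob} go through verbatim once $\deg$ is replaced by the Gauduchon degree, so no genuinely new content is required.
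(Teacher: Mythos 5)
Your proposal is correct and follows exactly the route the paper intends: the paper states this proposition with the proof omitted (the \texttt{qed} appears in the statement itself), and the preceding paragraph indicates precisely your strategy of refining each semi-stable HN quotient by a Seshadri filtration from \cite[V, Thm.~7.18]{Kob} and deducing uniqueness of the graded object from uniqueness of the HN filtration together with uniqueness of the Seshadri graded object. Your pullback construction and the third-isomorphism-theorem bookkeeping supply the details the paper leaves to the reader, and they are sound.
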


We call such a double filtration a \emph{Harder-Narasimhan-Seshadri filtration} of $E$ (abbreviated as \emph{HNS filtration}). Sometimes it is easier to view the HNS-filtration as a single filtration by subsheaves
\begin{equation*}
0=\bb F_0^{\hns}(E)\subset \bb F_1^{\hns}(E)\subset\cdots\subset \bb F_{l-1}^{\hns}(E)\subset \bb F_l^{\hns}(E)=E
\end{equation*}
with stable, torsion-free quotients $\bb F_i^{\hns}(E)/\bb F_{i-1}^{\hns}(E)$. The associated graded object $\Gr^{\hns}(E)$ is a torsion-free coherent sheaf, thus locally free outside codimension $2$ (\cite[\S V, Cor. 5.15]{Kob}). Now assume that $(X,g)$ is a compact Gauduchon surface. Then the singularity set
$$
Z^{\alg}:=\{x\in X:\text{$\Gr^{\hns}(E)$ is not locally free at $x$}\}
$$
is a finite set of points. Furthermore, the reflexified object $\Gr^{\hns}(E)^{**}$ is locally free outside codimension $3$ (\cite[\S V, Cor. 5.20]{Kob}), thus a holomorphic vector bundle over $X$.

\medskip

\subsection{}
The following result is an essential ingredient in identifying the Harder-Narasimhan type of Uhlenbeck limits. For the Yang-Mills flow, this result is contained in the proof of \cite[Lem. 4.3]{Das-Wen}. For any $\alpha\ge 1$, $N\in\mathbb{R}$, and $\delta>0$, define $\mathcal{H}_{\alpha,N}^{\delta}$ to be the space of smooth Hermitian metrics $H$ on $E$ such that there exists some $T\ge 0$ with
$$
\HYM_{\alpha,N}(D_t^H)<\HYM_{\alpha,N}(\vec{\mu}_0)+\delta,\quad\forall t\ge T
$$
where $D_t^H$ denote the solution to the flow $\eqref{eq-uym}$ with initial condition $(\bar{\partial},H)$.
\begin{prop}
\label{prop-hermitian-metric}
There exists some $\alpha_0>1$ such that for any $N\in\mathbb{R}$ and $\delta>0$, the space $\mathcal{H}_{\alpha,N}^{\delta}$ consists of all smooth Hermitian metrics on $E$, provided that $1\le\alpha<\alpha_0$.
\end{prop}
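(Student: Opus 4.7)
The plan is to adapt the argument of \cite[proof of Lem.~4.3]{Das-Wen} to the present setting. By Lem.~\ref{lem-other-hym-functional}, the function $t \mapsto \HYM_{\alpha,N}(D_t^H)$ is non-increasing, so the limit $L(H) := \lim_{t \to \infty} \HYM_{\alpha,N}(D_t^H)$ always exists. The proposition thus reduces to showing $L(H) < \HYM_{\alpha,N}(\vec{\mu}_0) + \delta$ uniformly in smooth Hermitian $H$, for $\alpha$ in some right-neighborhood $[1, \alpha_0)$ of $1$.

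The first step would be to produce a single metric in $\mathcal{H}^\delta_{\alpha, N}$. Using the second theorem of the introduction, for any $\delta' > 0$ we can find a smooth metric $H_1$ with $\|i\Lambda F_{(\bar{\partial}, H_1)} - \Psi^{\hn}(\bar{\partial}, H_1)\|_{L^\alpha} < \delta'$. Because the endomorphism $\Psi^{\hn}(\bar{\partial}, H_1)$ has pointwise eigenvalues given by the entries of $\vec{\mu}_0$, a Lipschitz estimate for $\varphi_\alpha$ on bounded subsets of $\fr u(r)$ (cf.~\cite[Lem.~2.23]{Das-Wen}) combined with this $L^\alpha$-bound yields $|\HYM_{\alpha,N}(D_0^{H_1}) - \HYM_{\alpha,N}(\vec{\mu}_0)| < \delta/2$ for $\delta'$ sufficiently small. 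Monotonicity then gives $L(H_1) < \HYM_{\alpha,N}(\vec{\mu}_0) + \delta/2$, so $H_1 \in \mathcal{H}^\delta_{\alpha, N}$.

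The crux is to extend this to an arbitrary smooth $H$. Here I would invoke Cor.~\ref{cor-time-sequence}, Prop.~\ref{prop-uhlenbeck-limit}, and Prop.~\ref{prop-limit-other-hym}, which together furnish an Uhlenbeck limit $D_\infty^H$ on a limiting bundle $(E_\infty, H_\infty)$ satisfying $D_\infty^H \Lambda F_{D_\infty^H} = 0$ and $L(H) = \HYM_{\alpha,N}(D_\infty^H)$. Since $D_\infty^H$ is a critical Hermitian structure, the limit bundle splits holomorphically into Hermitian-Einstein pieces, and $i\Lambda F_{D_\infty^H}$ has locally constant eigenvalues forming a tuple $\vec{\nu}(H) \in \RR^r$. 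Consequently $L(H) = 2\pi\,\varphi_\alpha(i(\vec{\nu}(H) - N))$, and it remains to bound $\varphi_\alpha(i(\vec{\nu}(H) - N)) \le \varphi_\alpha(i(\vec{\mu}_0 - N)) + \delta/(4\pi)$ uniformly in the tuples $\vec{\nu}(H)$ that can arise.

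The main obstacle is proving this last inequality. The key claim, in parallel with the Atiyah-Bott picture on K\"ahler manifolds, is that $\vec{\mu}_0$ dominates every such $\vec{\nu}(H)$ in the Hardy-Littlewood-P\'olya majorization order, reflecting the fact that the HN type is the maximal achievable eigenvalue vector in the complex gauge orbit of $(E, \bar{\partial})$. Granted this dominance, at $\alpha = 1$ one obtains $\varphi_1(i(\vec{\nu}(H) - N)) \le \varphi_1(i(\vec{\mu}_0 - N))$ immediately from convexity of $|\cdot - N|$ and Hardy-Littlewood-P\'olya. For $\alpha$ slightly larger than $1$, the same convexity gives an inequality that weakens only by $O(\alpha - 1)$ on bounded tuples, and $\|\vec{\nu}(H)\|$ is bounded a priori by the uniform $L^\infty$ control from Lem.~\ref{lem-lambda-f-lp}; choosing $\alpha_0 > 1$ close enough to $1$ then produces the required bound. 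The real technical content lies in verifying the majorization statement for Uhlenbeck limits in the Gauduchon setting, which I expect to follow from the structural results of \S2 by essentially algebraic arguments mirroring the K\"ahler case.
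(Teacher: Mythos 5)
There are two genuine circularities in your proposal. First, your source of a single metric in $\mathcal{H}^{\delta}_{\alpha,N}$ is the $L^p$-$\delta$-approximate critical Hermitian structure theorem (the second theorem of the introduction); but in this paper that theorem is a \emph{consequence} of Prop.~\ref{prop-hermitian-metric} (via Prop.~\ref{prop-uhlenbeck-limit-type} and Lem.~\ref{lem-dw-2}), so you cannot invoke it here. The paper instead produces the seed metric by hand: it resolves the singularities of the HNS filtration on a blow-up $\hat{X}\rightarrow X$, builds an $L^{\infty}$-$\delta$-approximate critical Hermitian structure for the regularized filtration using the L\"ubke--Teleman existence of Hermitian-Einstein metrics on stable bundles over Gauduchon surfaces, and transports it back to $X$ as in \cite[Lem.~4.2]{Das-Wen}. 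The restriction $1\le\alpha<\alpha_0$ enters precisely in controlling $\HYM_{\alpha,N}$ under the degenerate blow-down map, not through an $O(\alpha-1)$ loss in a convexity inequality as you suggest; indeed, if your majorization claim held, convexity of $\varphi_{\alpha}$ would give the bound for \emph{all} $\alpha\ge 1$ and no $\alpha_0$ would be needed.

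Second, and more seriously, the key claim that $\vec{\mu}_0$ majorizes every limiting tuple $\vec{\nu}(H)$ cannot ``follow from the structural results of \S 2 by essentially algebraic arguments'': combined with Lem.~\ref{lem-dw-1}, which gives the reverse relation $\vec{\mu}_0\le\vec{\nu}(H)$ in the Shatz/majorization order, it forces $\vec{\nu}(H)=\vec{\mu}_0$ as types --- i.e.\ it \emph{is} Prop.~\ref{prop-uhlenbeck-limit-type}, the statement the paper deduces \emph{from} Prop.~\ref{prop-hermitian-metric}. You are assuming the conclusion. Note also that the inequality available a priori points the wrong way for your purposes: $\vec{\mu}_0\le\vec{\nu}(H)$ together with convexity of $\varphi_{\alpha}$ yields $\HYM_{\alpha,N}(\vec{\mu}_0)\le\HYM_{\alpha,N}(\vec{\nu}(H))$, a lower bound, whereas you need an upper bound. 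The missing mechanism is the one the paper actually uses: show that $\mathcal{H}^{\delta}_{\alpha,N}$ is open and closed in the connected space of smooth Hermitian metrics, exactly as in \cite[proof of Lem.~4.3]{Das-Wen}, and combine this with the nonemptiness supplied by the blow-up construction. That continuity-plus-connectedness step is what lets one pass from a single good metric to all metrics without first knowing the Harder-Narasimhan type of the Uhlenbeck limit.
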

\begin{proof}
It suffices to prove that $\mathcal{H}_{\alpha,N}^{\delta}$ is open, closed, and nonempty. The proof for openness and closedness follows exactly as in \cite[proof of Lem. 4.3]{Das-Wen}. To prove that $\mathcal{H}_{\alpha,N}^{\delta}$ is nonempty for sufficiently small $\alpha$, the authors of \cite{Das-Wen} considered a sequence of blowing-ups $\pi:\hat{X}\rightarrow X$ such that the singularities of the HN filtration of $E$ are resolved on $\hat{X}$. Then they constructed an $L^{\infty}$-$\delta$-approximate critical Hermitian structure for the regularized filtration (\cite[Prop. 3.13]{Das-Wen}), and used it to explicitly construct an Hermitian metric $H$ on $X$ that lies in $\mathcal{H}_{\alpha,N}^{\delta}$ (\cite[Lem. 4.2]{Das-Wen}). Their proof carries over to our case, except that in constructing the $L^{\infty}$-$\delta$-approximate critical Hermitian structure for the regularized filtration, they used the following fact (cf. \cite[proof of Thm. 3.10]{Das-Wen}):

\emph{If $E$ is a semi-stable vector bundle over a compact K\"{a}hler surface $X$, then there is an $L^{\infty}$-$\delta$-approximate critical Hermitian structure on $E$.}

\noindent
The corresponding result on Gauduchon surfaces has not been established. However, it is known that if $E$ is a stable vector bundle over a compact Gauduchon surface $X$, then $E$ admits an Hermitian-Einstein metric (cf. \cite{Lub}). Therefore, we may argue as in \cite[Thm. 3.10]{Das-Wen} for the HNS filtration, and the $L^{\infty}$-$\delta$-approximate critical Hermitian structure exists by the proof of \cite[Prop. 3.13]{Das-Wen}, applied to the HNS filtration.
\end{proof}

\noindent
Before we prove our main theorems, we collect the following convergence results from \cite{Das-Wen} and list them as lemmas. Their proofs carry over verbatim to the case of Gauduchon surfaces.

\begin{lem}[cf. \cite{Das-Wen}, Prop. 2.21]
\label{lem-dw-1}
Let $D_j$ be a sequence of complex gauge equivalent integrable connections on an Hermitian vector bundle $(E,H_0)$ of rank $r$, such that for some $p>4$, $D_j$ converges weakly in $W^{1,p}$ to an Uhlenbeck limit $D_{\infty}$. Assume furthermore that $\Lambda F_{D_j}\rightarrow\Lambda F_{D_{\infty}}$ in $L^1$. Then $\mathrm{HN}(E,D_0'')\le\mathrm{HN}(E_{\infty},D_{\infty}'')$.
\end{lem}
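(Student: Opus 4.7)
The plan is to translate the HN type comparison into degree inequalities for saturated subsheaves, and to produce those subsheaves of $(E_\infty, D_\infty'')$ by transporting the HN filtration of $(E, D_0'')$ first through the complex gauge transformations relating $D_0$ to $D_j$ and then through the Uhlenbeck gauges relating $D_j$ to $D_\infty$. Concretely, the partial order $\mathrm{HN}(E, D_0'') \le \mathrm{HN}(E_\infty, D_\infty'')$ on $r$-tuples is equivalent to pointwise domination of the Harder--Narasimhan polygons; since the total degrees agree (automatic from Chern--Weil and the hypothesis $\Lambda F_{D_j} \to \Lambda F_{D_\infty}$ in $L^1$), it will suffice to exhibit, for each term $\bb F_s^{\hn}(E, D_0'')$, a saturated coherent subsheaf $F_s^\infty \subset (E_\infty, D_\infty'')$ of the same rank satisfying $\deg(F_s^\infty) \ge \deg(\bb F_s^{\hn}(E, D_0''))$.

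First, write $D_j = g_j(D_0)$ for $g_j \in \Aut(E)$, and let $\tilde\pi_s^{(j)}$ be the $H_0$-orthogonal projection onto $g_j(\bb F_s^{\hn}(E, D_0''))$; this is a $W^{1,2}$-subbundle of $(E, D_j'', H_0)$ defining a saturated coherent subsheaf isomorphic to $\bb F_s^{\hn}(E, D_0'')$, hence of the same degree. With $\tau_j$ the unitary gauges from the Uhlenbeck limit, set $\sigma_s^{(j)} := \tau_j \tilde\pi_s^{(j)} \tau_j^{-1}$, a Hermitian projection on $(E_\infty, H_\infty)$ over $X - Z^{\an}$. I will use the Chern--Weil identity for $W^{1,2}$-subbundles on a compact Gauduchon manifold,
\begin{equation*}
\deg(\mathrm{Im}\,\pi) = \int_X \mathrm{tr}(\pi \cdot i\Lambda F_D) \, d\vol - \int_X |D'' \pi|^2 \, d\vol,
\end{equation*}
together with the uniform $L^\infty$-bound on $\Lambda F_{D_j}$ from Lem.~\ref{lem-lambda-f-lp}, to extract a uniform $L^2$-bound on $D_j''\tilde\pi_s^{(j)}$, and hence a uniform $W^{1,2}_{\loc}$-bound on $\sigma_s^{(j)}$ away from $Z^{\an}$.

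Next, by weak $W^{1,2}_{\loc}$-compactness and Rellich, I pass to a subsequence with $\sigma_s^{(j)} \rightharpoonup \sigma_s^\infty$ weakly in $W^{1,2}_{\loc}$ and strongly in $L^2_{\loc}$ on $X - Z^{\an}$. The limit $\sigma_s^\infty$ is a Hermitian projection satisfying $(\mathbb I - \sigma_s^\infty) D_\infty''(\sigma_s^\infty) = 0$ weakly. The Uhlenbeck--Yau regularity theorem then produces a saturated coherent subsheaf $F_s^\infty \subset (E_\infty, D_\infty'')|_{X - Z^{\an}}$ of rank equal to $\rank(\bb F_s^{\hn})$, which extends across the finite set $Z^{\an}$ to a saturated coherent subsheaf on all of $X$. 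To compare degrees, I apply Chern--Weil to $\sigma_s^\infty$: the integral $\int_X \mathrm{tr}(\sigma_s^\infty \cdot i\Lambda F_{D_\infty}) \, d\vol$ equals $\lim_j \int_X \mathrm{tr}(\sigma_s^{(j)} \cdot i\Lambda F_{\tau_j(D_j)}) \, d\vol$, by the $L^1$-convergence hypothesis combined with a.e.\ convergence and the $L^\infty$-bound on $\sigma_s^{(j)}$, while $\int_X |D_\infty'' \sigma_s^\infty|^2 \le \liminf_j \int_X |D_j'' \tilde\pi_s^{(j)}|^2$ by weak lower semicontinuity. Subtracting the Chern--Weil identities for $\tilde\pi_s^{(j)}$ and $\sigma_s^\infty$ yields the desired $\deg(F_s^\infty) \ge \deg(\bb F_s^{\hn}(E, D_0''))$.

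The main obstacle lies in identifying $\sigma_s^\infty$ as a genuine coherent subsheaf on all of $X$: the regularity of a $W^{1,2}$ weak subbundle requires the Uhlenbeck--Yau theorem, and the extension across $Z^{\an}$ uses that a finite set has complex codimension $2$ on the surface $X$. The Chern--Weil identity itself, which is the algebraic heart of the degree comparison, remains valid on any compact Gauduchon manifold because $\partial\bar\partial(\omega^{n-1}) = 0$ is precisely the condition making $\int_X c_1 \wedge \omega^{n-1}$ well-defined and invariant under change of Hermitian metric. With these ingredients in place, the remaining steps (semicontinuity and dominated convergence) proceed as in \cite[Prop.~2.21]{Das-Wen}.
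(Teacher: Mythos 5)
Your proposal is correct and follows essentially the same route as the paper, which establishes this lemma simply by citing \cite{Das-Wen}, Prop.~2.21 and observing that the argument there (transporting the HN filtration projections through the gauges, uniform $W^{1,2}$ bounds via the Chern--Weil formula, weak limits, Uhlenbeck--Yau regularity of weakly holomorphic subbundles, and semicontinuity of degree) carries over verbatim to Gauduchon surfaces. The only slip is your appeal to Lem.~\ref{lem-lambda-f-lp} for an $L^\infty$ bound on $\Lambda F_{D_j}$ --- that lemma concerns solutions of the flow, whereas here $D_j$ is an arbitrary complex-gauge-equivalent sequence --- but the uniform $L^1$ bound furnished by the hypothesis $\Lambda F_{D_j}\rightarrow\Lambda F_{D_\infty}$ in $L^1$ already suffices for the Chern--Weil estimate $\int_X\mathrm{tr}\bigl(\tilde\pi_s^{(j)}\,i\Lambda F_{D_j}\bigr)\le\sqrt{r}\,\|\Lambda F_{D_j}\|_{L^1}$, so the step goes through unchanged.
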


\begin{lem}[cf. \cite{Das-Wen}, Lem. 4.5(1)]
\label{lem-dw-2}
Let $D_j$ be as in Lem. \ref{lem-dw-1}, and assume furthermore that $\mathrm{HN}(E,D_0'')=\mathrm{HN}(E_{\infty},D_{\infty}'')$ and $\|\Lambda F_{D_j}\|_{L^{\infty}}$ is uniformly bounded. Let $\{\pi_j^{(i)}\}$ be the HN filtration of $(E,D_j'')$ and $\{\pi_{\infty}^{(i)}\}$ the HN filtration of $(E_{\infty},D_{\infty}'')$. Then after passing to a subsequence, $\pi_j^{(i)}\rightarrow\pi_{\infty}^{(i)}$ strongly in $L^p\cap W^{1,2}_{\loc}$, for all $1\le p<\infty$ and all $i$.
\end{lem}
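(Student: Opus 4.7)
The plan is to verify that the proof of \cite[Lem. 4.5(1)]{Das-Wen} transfers verbatim to the Gauduchon setting. The argument splits into three steps: obtaining a uniform $W^{1,2}$ bound on $\pi_j^{(i)}$, identifying any weak subsequential limit with the HN projection of $(E_\infty,D''_\infty)$, and upgrading weak $W^{1,2}$-convergence to strong $W^{1,2}_{\loc}$-convergence via an energy identity.

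First I would establish uniform $W^{1,2}$ bounds. The projections satisfy $(\pi_j^{(i)})^2=\pi_j^{(i)}$, so they are bounded pointwise, and it remains to bound $\bar\partial_j\pi_j^{(i)}$ in $L^2$. The key input is the Chern--Weil identity for a $W^{1,2}$-subbundle $\pi\subset(E,\bar\partial,H)$,
\begin{equation*}
\deg(\pi)=\int_X\mathrm{tr}\bigl(\pi\cdot i\Lambda F_{(\bar\partial,H)}\bigr)\frac{\omega^n}{n!}-\|\bar\partial\pi\|_{L^2}^2,
\end{equation*}
whose derivation uses only $\partial\bar\partial(\omega^{n-1})=0$ and thus holds on any compact Gauduchon manifold. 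By the hypothesis $\mathrm{HN}(E,D_0'')=\mathrm{HN}(E_\infty,D''_\infty)$ and the fact that the $D_j$ are complex gauge equivalent, each $\deg(\bb F_i^{\hn}(E,D_j''))$ is independent of $j$. Combined with the uniform $L^\infty$ bound on $\Lambda F_{D_j}$, the identity applied to $\pi=\pi_j^{(i)}$ yields a uniform bound on $\|\bar\partial_j\pi_j^{(i)}\|_{L^2}$, hence on the full $W^{1,2}$-norm.

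Next, Rellich--Kondrachov produces, after passing to a subsequence, a weak $W^{1,2}$ and strong $L^p$ limit $\tilde\pi^{(i)}$. The algebraic conditions $(\tilde\pi^{(i)})^2=\tilde\pi^{(i)}$ and $(\tilde\pi^{(i)})^*=\tilde\pi^{(i)}$ pass to the $L^p$ limit, and the defining holomorphicity condition $(\mathbb{I}-\tilde\pi^{(i)})D''_\infty(\tilde\pi^{(i)})=0$ follows by combining strong $L^p$-convergence of $\pi_j^{(i)}$, weak $W^{1,2}$-convergence of $\bar\partial_j\pi_j^{(i)}$, and weak $W^{1,p}_{\loc}$-convergence of $D_j''$. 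Thus $\tilde\pi^{(i)}$ defines a $W^{1,2}$-subsheaf of $(E_\infty,D''_\infty)$; the degrees and ranks of the associated filtration agree with those prescribed by $\mathrm{HN}(E_\infty,D''_\infty)$ via \eqref{eq-def-hn-type}, and the slope inequalities survive passage to the limit, so the uniqueness assertion in Prop. \ref{prop-hn-filtration} forces $\tilde\pi^{(i)}=\pi_\infty^{(i)}$.

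Finally, strong $W^{1,2}_{\loc}$-convergence follows from an energy identity: writing the Chern--Weil formula for each $\pi_j^{(i)}$ and taking limits, the right-hand side converges thanks to the hypothesis $\Lambda F_{D_j}\to\Lambda F_{D_\infty}$ in $L^1$ together with strong $L^p$-convergence of $\pi_j^{(i)}$ and the constancy of $\deg(\pi_j^{(i)})$. Hence $\|\bar\partial_j\pi_j^{(i)}\|_{L^2}\to\|\bar\partial_\infty\pi_\infty^{(i)}\|_{L^2}$, which combined with weak $W^{1,2}$-convergence gives strong convergence in $W^{1,2}_{\loc}$ away from $Z^{\an}$. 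The main obstacle is the identification step, since $\Gr^{\hns}(E,D_0'')$ need not be locally free at the algebraic singular set $Z^{\alg}$, so one must work throughout with reflexive sheaves and invoke the sheaf-theoretic results of \cite[\S V]{Kob} to justify that the degrees appearing in the Chern--Weil identity are the correct sheaf-theoretic degrees. As these inputs are purely algebraic, the Daskalopoulos--Wentworth argument applies without modification in the Gauduchon setting.
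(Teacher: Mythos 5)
Your reconstruction is correct and is exactly the argument the paper intends: the paper gives no independent proof of Lem.~\ref{lem-dw-2}, stating only that the proof of \cite[Lem.~4.5(1)]{Das-Wen} carries over verbatim, and your three steps (uniform $W^{1,2}$ bound from the Chern--Weil identity for $W^{1,2}$-subbundles, which only needs $\partial\bar\partial(\omega^{n-1})=0$; identification of the weak limit with $\pi_\infty^{(i)}$ via semicontinuity of degree and uniqueness of the HN filtration; strong $W^{1,2}_{\loc}$-convergence from the energy identity) are precisely that argument. No discrepancy to report.
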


\begin{lem}[cf. \cite{Das-Wen}, Thm. 5.1]
\label{lem-dw-3}
Suppose $D_j$ is a sequence of complex gauge equivalent integrable connections, and let $\vec{\mu}_0=\mathrm{HN}(E,D_0'')$. Assume $\|F_{D_j}\|_{L^2}$ and $\|\Lambda F_{D_j}\|_{L^{\infty}}$ are uniformly bounded, and $\HYM(D_j)\rightarrow\HYM(\vec{\mu}_0)$. Then there is a connection $D_{\infty}$ on an Hermitian vector bundle $E_{\infty}$, and a finite set $Z^{\an}\subset X$ such that
\begin{enumerate}[(i)]
	\item $(E_{\infty},D_{\infty}'')$ is holomorphically isomorphic to $\Gr^{\hns}(E,D_0'')^{**}$;
	
	\item $E$ and $E_{\infty}$ are identified outside $Z^{\an}$ via $W^{2,p}_{\loc}$-isometries for all $p$;
	
	\item Via the isometries in (ii), and after passing to a subsequence, $D_j\rightarrow D_{\infty}$ in $L^2_{\loc}$ away from $Z^{\an}$.
\end{enumerate}
\end{lem}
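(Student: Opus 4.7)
The plan is to follow the proof of \cite[Thm. 5.1]{Das-Wen} essentially verbatim, substituting the Gauduchon-specific analytic ingredients developed in \S1--\S2 for their K\"{a}hler counterparts whenever they are invoked. The argument splits naturally into three steps.

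\emph{Step 1 (Uhlenbeck extraction and criticality).} Fix some $p>4$. The uniform bounds $\|F_{D_j}\|_{L^2}, \|\Lambda F_{D_j}\|_{L^\infty}\le C$ let me apply Prop. \ref{prop-weak-compactness} to produce, after passing to a subsequence, a weak $W^{1,p}_{\loc}$-limit $D_\infty$ on an Hermitian bundle $(E_\infty,H_\infty)$ over $X-Z^{\an}$ with $Z^{\an}$ finite. I would then upgrade $D_\infty$ to a critical Hermitian structure: weak lower semicontinuity together with $\HYM(D_j)\to \HYM(\vec\mu_0)$ gives $\HYM(D_\infty)\le\HYM(\vec\mu_0)$; combined with Lem. \ref{lem-dw-1} (which yields $\vec\mu_\infty:=\mathrm{HN}(E_\infty,D_\infty'')\ge \vec\mu_0$ in the appropriate partial order) and the algebraic Atiyah-Bott lower bound $\HYM(D_\infty)\ge\HYM(\vec\mu_\infty)$, all inequalities collapse to equalities. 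In particular $\vec\mu_\infty=\vec\mu_0$ and $D_\infty$ minimizes $\HYM$ on its complex gauge orbit, which forces $D_\infty\Lambda F_{D_\infty}=0$ weakly. Prop. \ref{prop-uhlenbeck-limit} then extends $(E_\infty,H_\infty,D_\infty)$ smoothly across $Z^{\an}$ with a holomorphic Hermitian-Einstein direct sum decomposition whose slopes are prescribed by $\vec\mu_0$.

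\emph{Step 2 (Matching with $\Gr^{\hns}(E,D_0'')^{**}$).} Having equalized HN types, Lem. \ref{lem-dw-2} provides strong $L^p\cap W^{1,2}_{\loc}$ convergence of the HN projections $\pi_j^{(i)}\to\pi_\infty^{(i)}$, realizing the HN filtration of $D_\infty''$ as the subsequential limit of the HN filtrations of $D_j''$. To descend to the Seshadri level on each semi-stable slot, I would replay the induction of \cite[\S 5]{Das-Wen} on the length of the HNS filtration, at each step splitting off the maximal stable Seshadri subsheaf of the limit and matching it to that of $D_0$. The sole K\"{a}hler-specific ingredient of that induction is the existence of Hermitian-Einstein metrics on stable bundles, which over a Gauduchon surface is supplied by \cite{Lub} (the same input already used in Prop. \ref{prop-hermitian-metric}). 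Reflexifying the resulting graded object across $Z^{\an}$ via Prop. \ref{prop-removable-singularity} then delivers the holomorphic isomorphism $(E_\infty,D_\infty'')\cong\Gr^{\hns}(E,D_0'')^{**}$ claimed in (i). Statement (ii) is contained in the improved Uhlenbeck definition (Rmk. \ref{rem-defn-uhlenbeck-limit}), while (iii) follows from (ii) together with the Rellich compact embedding $W^{1,p}_{\loc}\hookrightarrow L^2_{\loc}$ applied to the gauge-transformed connection forms.

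\emph{Main obstacle.} The technical heart is the inductive Seshadri identification in Step 2: every slope inequality and short exact sequence argument in \cite[\S 5]{Das-Wen} must be re-audited under the Gauduchon degree. This audit is expected to be routine because the Harder-Narasimhan-Seshadri formalism in \cite[\S V]{Kob} is purely algebraic---requiring only that slope be well-defined and additive in exact sequences---and because its sole analytic input (Hermitian-Einstein metrics on stable bundles) is already available on Gauduchon surfaces. A second, secondary care point is justifying the Atiyah-Bott lower bound $\HYM(D)\ge\HYM(\mathrm{HN}(E,D''))$ over a Gauduchon base; this follows from the standard Chern-Weil computation paired against $\omega$, which does not require the K\"{a}hler condition and thus transfers without modification.
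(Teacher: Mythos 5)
Your proposal is correct and takes essentially the same approach as the paper, which for this lemma gives no independent argument but simply cites \cite[Thm.~5.1]{Das-Wen} with the remark that its proof carries over verbatim to Gauduchon surfaces once the analytic inputs of \S1--\S2 and the L\"ubke--Teleman existence theorem for Hermitian--Einstein metrics on stable bundles are substituted for their K\"ahler counterparts --- precisely the substitutions you describe. The only (cosmetic) slip is the appeal to Prop.~\ref{prop-uhlenbeck-limit} for the extension across $Z^{\an}$: its hypothesis $\|D_j\Lambda F_{D_j}\|_{L^2}\rightarrow 0$ is not among the assumptions here, so once you have derived $D_{\infty}\Lambda F_{D_{\infty}}=0$ in the weak sense from the collapse of inequalities, you should invoke Prop.~\ref{prop-regularity} and Prop.~\ref{prop-removable-singularity} directly.
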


\noindent
As in \cite[proof of Thm. 4.1]{Das-Wen}, we may combine Prop. \ref{prop-hermitian-metric} and Lem. \ref{lem-dw-1} to get
\begin{prop}
\label{prop-uhlenbeck-limit-type}
Let $D_t$ be a solution to \eqref{eq-uym}, and $D_{\infty}$ be an Uhlenbeck limit along any minimizing sequence $t_j\rightarrow\infty$, on vector bundle $E_{\infty}$. Then $\mathrm{HN}(E,D_0'')=\mathrm{HN}(E_{\infty},D_{\infty}'')$.\qed
\end{prop}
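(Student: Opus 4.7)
The plan is to follow the strategy of \cite[Thm.~4.1]{Das-Wen} and establish two matching inequalities between the HN types, with respect to the partial order on $r$-tuples in which $\vec\mu\le\vec\nu$ means that the HN polygon of $\vec\nu$ dominates that of $\vec\mu$. The upper inequality $\mathrm{HN}(E,D_0'')\le\mathrm{HN}(E_\infty,D_\infty'')$ will come directly from Lem.~\ref{lem-dw-1}, while the reverse inequality will come from the Hermitian Yang--Mills functional bounds supplied by Prop.~\ref{prop-hermitian-metric} and Prop.~\ref{prop-limit-other-hym}.

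First I would verify the hypotheses of Lem.~\ref{lem-dw-1} for the minimizing sequence $D_{t_j}$. Each $D_t$ is integrable and lies in the complex gauge orbit of $D_0$ by the discussion surrounding \eqref{eq-uym}; the weak $W^{1,p}$-Uhlenbeck limit $D_\infty$ on the bundle $(E_\infty,H_\infty)$ is provided by Prop.~\ref{prop-uhlenbeck-limit} (applicable by Lem.~\ref{lem-lambda-f-lp}, Prop.~\ref{prop-bound-f}, and Cor.~\ref{cor-time-sequence}); and $\Lambda F_{D_{t_j}}\to\Lambda F_{D_\infty}$ in $L^p$ for every finite $p$ by \eqref{eq-lambda-f-lp-limit}, in particular in $L^1$. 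Lem.~\ref{lem-dw-1} thus yields $\mathrm{HN}(E,D_0'')\le\mathrm{HN}(E_\infty,D_\infty'')$.

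For the reverse inequality, fix $\alpha\in[1,\alpha_0)$ with $\alpha_0$ as in Prop.~\ref{prop-hermitian-metric} and an arbitrary $N\in\RR$. Prop.~\ref{prop-hermitian-metric} asserts that $H_0\in\mathcal{H}_{\alpha,N}^\delta$ for every $\delta>0$, which means that $\HYM_{\alpha,N}(D_t)<\HYM_{\alpha,N}(\vec\mu_0)+\delta$ for all sufficiently large $t$. By Prop.~\ref{prop-limit-other-hym}, $\HYM_{\alpha,N}(D_{t_j})\to\HYM_{\alpha,N}(D_\infty)$, so letting $\delta\to 0$ gives $\HYM_{\alpha,N}(D_\infty)\le\HYM_{\alpha,N}(\vec\mu_0)$. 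On the other hand, the general lower bound $\HYM_{\alpha,N}(D)\ge\HYM_{\alpha,N}(\mathrm{HN}(E,D''))$ (a purely algebraic fact about HN types of integrable connections, cf.\ \cite[Prop.~2.26]{Das-Wen}) applies to $D_\infty$ and yields
$$
\HYM_{\alpha,N}\bigl(\mathrm{HN}(E_\infty,D_\infty'')\bigr)\le\HYM_{\alpha,N}(\vec\mu_0)
$$
for all admissible $\alpha$ and all $N\in\RR$.

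The main obstacle is then to upgrade this family of scalar inequalities into an inequality of tuples going in the opposite direction to Step~1. This is carried out by the algebraic comparison lemma of \cite[Lem.~2.24 \& Prop.~2.26]{Das-Wen}: if two $r$-tuples $\vec\mu,\vec\nu$ satisfy $\vec\mu\le\vec\nu$ in the HN partial order and $\HYM_{\alpha,N}(\vec\nu)\le\HYM_{\alpha,N}(\vec\mu)$ for a nontrivial range of $(\alpha,N)$, then $\vec\mu=\vec\nu$. Applying this with $\vec\mu=\vec\mu_0$ and $\vec\nu=\mathrm{HN}(E_\infty,D_\infty'')$ forces $\mathrm{HN}(E_\infty,D_\infty'')=\mathrm{HN}(E,D_0'')$, completing the proof. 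No analytic input beyond what is already in \S1--\S2 and Prop.~\ref{prop-hermitian-metric} is required; the proof is essentially a bookkeeping argument organized around the inequality structure of the HYM functionals.
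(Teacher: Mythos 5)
Your proposal is correct and follows exactly the route the paper takes: the paper's proof of Prop.~\ref{prop-uhlenbeck-limit-type} is precisely the combination of Lem.~\ref{lem-dw-1} (for the inequality $\mathrm{HN}(E,D_0'')\le\mathrm{HN}(E_{\infty},D_{\infty}'')$) with Prop.~\ref{prop-hermitian-metric} and Prop.~\ref{prop-limit-other-hym} (for the reverse bound on the $\HYM_{\alpha,N}$ functionals), closed off by the majorization argument of \cite[Lem.~2.24 \& Prop.~2.26]{Das-Wen}, all as in \cite[proof of Thm.~4.1]{Das-Wen}. You have simply written out the details that the paper leaves as a citation.
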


\noindent
The proof of our main theorems now follow trivially from prior estimates along the flow \eqref{eq-uym} and the above convergence results of \cite{Das-Wen}.

\begin{thm}
Let $(E,\bar{\partial})$ be a holomorphic vector bundle over a compact Gauduchon surface $(X,g)$. Given any $\delta>0$ and any $1\le p<\infty$, there is an $L^p$-$\delta$-approximate critical Hermitian structure on $E$.
\end{thm}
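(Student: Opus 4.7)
The plan is to take $H$ to be a metric $H_{t_j}$ produced by running Donaldson's heat flow \eqref{eq-donaldson-heat-flow} from an arbitrary initial metric $H_0$ and stopping along a suitable subsequence $t_j \to \infty$. Equivalently, on the fixed Hermitian bundle $(E, H_0)$ one has the complex-gauge-equivalent family $D_t$ of connections satisfying \eqref{eq-uym}. The isometric identification $(E, \bar{\partial}, H_{t_j}) \cong (E, D_{t_j}'', H_0)$ of Hermitian holomorphic bundles implemented by $w_{t_j}$ (up to the unitary gauge $\theta_{t_j}$ used to pass from $w_{t_j}(D_0)$ to $D_{t_j}$) carries $i\Lambda F_{(\bar{\partial}, H_{t_j})}$ to $i\Lambda F_{D_{t_j}}$, and carries $\Psi^{\hn}(\bar{\partial}, H_{t_j})$ to $\Psi^{\hn}(D_{t_j}'', H_0)$ because the HN filtration is a holomorphic invariant and $H_0$-orthogonality on the right matches $H_{t_j}$-orthogonality on the left. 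Consequently
$$\bigl\|i\Lambda F_{(\bar{\partial},H_{t_j})} - \Psi^{\hn}(\bar{\partial}, H_{t_j})\bigr\|_{L^p} = \bigl\|i\Lambda F_{D_{t_j}} - \Psi^{\hn}(D_{t_j}'', H_0)\bigr\|_{L^p},$$
and it suffices to make the right-hand side arbitrarily small along a subsequence of the flow.

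By Cor.~\ref{cor-time-sequence} fix a minimizing sequence $t_j\to\infty$. Lem.~\ref{lem-lambda-f-lp} and Prop.~\ref{prop-bound-f} give uniform bounds on $\|\Lambda F_{D_{t_j}}\|_{L^\infty}$ and $\|F_{D_{t_j}}\|_{L^2}$, along with $\|D_{t_j}\Lambda F_{D_{t_j}}\|_{L^2}\to 0$, so Prop.~\ref{prop-uhlenbeck-limit} applies: after passing to a subsequence we obtain an Uhlenbeck limit $D_\infty$ on a smooth Hermitian bundle $(E_\infty, H_\infty)$ extended across $Z^{\an}$, satisfying $D_\infty\Lambda F_{D_\infty}=0$. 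By Kobayashi \cite[\S IV, Thm.~3.27]{Kob}, $(E_\infty, D_\infty'', H_\infty)$ decomposes orthogonally into Hermitian-Einstein subbundles. Prop.~\ref{prop-uhlenbeck-limit-type} then yields $\mathrm{HN}(E_\infty,D_\infty'')=\mathrm{HN}(E,\bar{\partial})$, and grouping the HE summands of $E_\infty$ by slope realizes the HN filtration of $E_\infty$ as the corresponding successive orthogonal direct sums. In particular
$$\Psi^{\hn}(D_\infty'', H_\infty) = i\Lambda F_{D_\infty}$$
as endomorphisms of $E_\infty$.

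With this HN-type equality in hand, Lem.~\ref{lem-dw-2} applies to the sequence $D_{t_j}$: after a further subsequence the HN projections $\pi_{t_j}^{(i)}$ of $(E,D_{t_j}'')$ converge in $L^p$ to the HN projections $\pi_\infty^{(i)}$ of $(E_\infty,D_\infty'')$, so $\Psi^{\hn}(D_{t_j}'', H_0) \to \Psi^{\hn}(D_\infty'', H_\infty)$ in $L^p$. Combining with \eqref{eq-lambda-f-lp-limit}, which gives $\Lambda F_{D_{t_j}}\to\Lambda F_{D_\infty}$ in $L^p$, we conclude
$$\lim_{j\to\infty}\bigl\|i\Lambda F_{D_{t_j}} - \Psi^{\hn}(D_{t_j}'', H_0)\bigr\|_{L^p} = \bigl\|i\Lambda F_{D_\infty} - \Psi^{\hn}(D_\infty'', H_\infty)\bigr\|_{L^p} = 0,$$
so for $j$ sufficiently large the smooth metric $H_{t_j}$ on $(E,\bar{\partial})$ is an $L^p$-$\delta$-approximate critical Hermitian structure.

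The most delicate step is the identification $\Psi^{\hn}(D_\infty'', H_\infty) = i\Lambda F_{D_\infty}$. It uses the Hermitian-Einstein splitting produced by $D_\infty\Lambda F_{D_\infty}=0$ together with the HN-type matching of Prop.~\ref{prop-uhlenbeck-limit-type}, which ensures that the HE constants on the summands of $E_\infty$ are precisely the HN slopes appearing in the original bundle $(E,\bar{\partial})$ and that their multiplicities recover the HN filtration by grouped direct sums. Everything else is a direct assembly of prior results: Cor.~\ref{cor-time-sequence}, Prop.~\ref{prop-uhlenbeck-limit}, \eqref{eq-lambda-f-lp-limit}, Prop.~\ref{prop-uhlenbeck-limit-type}, and Lem.~\ref{lem-dw-2}.
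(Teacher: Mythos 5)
Your proposal is correct and follows essentially the same route as the paper: run the flow, take an Uhlenbeck limit along a minimizing sequence, invoke Prop.~\ref{prop-uhlenbeck-limit-type} to match HN types, apply Lem.~\ref{lem-dw-2} together with \eqref{eq-lambda-f-lp-limit}, and use the fact that $i\Lambda F_{D_\infty}=\Psi^{\hn}(D_\infty'',H_\infty)$ for the critical limit. The only difference is that you spell out two steps the paper leaves implicit --- the unitary translation between the varying-metric and varying-connection pictures, and the slope-grouping argument identifying $\Psi^{\hn}(D_\infty'',H_\infty)$ with $i\Lambda F_{D_\infty}$ (which in fact holds for any critical Hermitian structure, independently of the HN-type matching).
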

\begin{proof}
Argue as in \cite[proof of Thm. 3.11]{Das-Wen}. Let $D_t$ be the solution to \eqref{eq-uym} with initial condition $D_0''=\bar{\partial}$, and let $D_{\infty}$ be an Uhlenbeck limit along some minimizing sequence $t_j\rightarrow\infty$. By Prop. \ref{prop-uhlenbeck-limit-type}, $\mathrm{HN}(E,D_0'')=\mathrm{HN}(E_{\infty},D_{\infty}'')$ and we may apply Lem. \ref{lem-dw-2} to obtain $\Psi^{\hn}(D_{t_j}'',H)\rightarrow \Psi^{\hn}(D_{\infty}'',H_{\infty})$ in $L^p$. Since $H_{\infty}$ is a critical Hermitian structure, $i\Lambda F_{D_{\infty}}=\Psi^{\hn}(D_{\infty}'',H_{\infty})$. Hence
$$
\|i\Lambda F_{D_{t_j}}-\Psi^{\hn}(D_{t_j}'',H)\|_{L^p}\le\|\Lambda F_{D_{t_j}}-\Lambda F_{D_{\infty}}\|_{L^p}+\|\Psi^{\hn}(D_{t_j}'',H)-\Psi^{\hn}(D_{\infty}'',H)\|_{L^p}\rightarrow 0
$$
using \eqref{eq-lambda-f-lp-limit} for the first term on the right hand side.
\end{proof}

\begin{thm}
Let $(E,\bar{\partial})$ be an Hermitian holomorphic vector bundle over a compact Gauduchon surface $(X,g)$. Let $D_t$ be the solution to \eqref{eq-uym} with initial condition $D_0''=\bar{\partial}$. Given any sequence $t_j\rightarrow\infty$, there exists a connection $D_{\infty}$ on an Hermitian vector bundle $E_{\infty}$, and a finite set $Z^{\an}\subset X$ such that
\begin{enumerate}[(i)]
	\item $(E_{\infty},D_{\infty}'')$ is holomorphically isomorphic to $\Gr^{\hns}(E,D_0'')^{**}$;
	
	\item $E$ and $E_{\infty}$ are identified outside $Z^{\an}$ via $W^{2,p}_{\loc}$-isometries for all $p$;
	
	\item Via the isometries in (ii), and after passing to a subsequence, $D_j\rightarrow D_{\infty}$ in $L^2_{\loc}$ away from $Z^{\an}$.
\end{enumerate}
\end{thm}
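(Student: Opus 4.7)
The plan is to reduce this theorem to the abstract convergence result Lem.~\ref{lem-dw-3} (the Gauduchon analogue of \cite[Thm.~5.1]{Das-Wen}), which takes as input any sequence of complex gauge equivalent integrable connections $D_j$ with uniformly bounded $\|F_{D_j}\|_{L^2}$ and $\|\Lambda F_{D_j}\|_{L^\infty}$, and with $\HYM(D_j)\to\HYM(\vec{\mu}_0)$, and produces exactly (i)--(iii). So the task is to verify each of these hypotheses for the arbitrary sequence $D_{t_j}$ extracted from the flow.

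First, I would dispatch the two structural hypotheses. \emph{Complex gauge equivalence:} by the proposition defining \eqref{eq-uym}, each $D_t$ is obtained from $w_t(D_0)$ via the real gauge transformation $\theta_t$, so $D_t=(\theta_t w_t)(D_0)$ lies in the $\Aut(E)$-orbit of $D_0=d+A_0$ with $D_0''=\bar{\partial}$. \emph{Uniform bounds:} Prop.~\ref{prop-bound-f} gives that $\|F_{D_t}\|_{L^2}$ is non-increasing (this is where the Gauduchon condition and dimension~$2$ enter), and Lem.~\ref{lem-lambda-f-lp} gives that $\|\Lambda F_{D_t}\|_{L^\infty}$ is non-increasing.

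The crux is the $\HYM$-convergence, which I would prove as a \emph{continuous-time} statement $\lim_{t\to\infty}\HYM(D_t)=\HYM(\vec{\mu}_0)$; then the conclusion for the arbitrary sequence $t_j\to\infty$ follows automatically. By Lem.~\ref{lem-other-hym-functional} (the case $\alpha=2$, $N=0$), $\HYM(D_t)$ is monotone non-increasing in $t$, so $\lim_{t\to\infty}\HYM(D_t)$ exists. To identify this limit, I invoke Cor.~\ref{cor-time-sequence} to produce a minimizing sequence $s_k\to\infty$, then apply Prop.~\ref{prop-uhlenbeck-limit} to extract an Uhlenbeck limit $D_\infty$ on some bundle $(E_\infty,H_\infty)$ satisfying $D_\infty\Lambda F_{D_\infty}=0$, so that $E_\infty$ splits holomorphically and orthogonally as $\bigoplus Q_\infty^{(i)}$ with each summand Hermitian-Einstein. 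Computing pointwise with this splitting yields $\HYM(D_\infty)=2\pi\sum_i\rank(Q_\infty^{(i)})\mu(Q_\infty^{(i)})^2=\HYM(\vec{\mu}_\infty)$, where $\vec{\mu}_\infty$ is the HN type of $(E_\infty,D_\infty'')$. Prop.~\ref{prop-limit-other-hym} then gives $\lim_{t\to\infty}\HYM(D_t)=\HYM(D_\infty)=\HYM(\vec{\mu}_\infty)$, and Prop.~\ref{prop-uhlenbeck-limit-type} identifies $\vec{\mu}_\infty=\vec{\mu}_0$, so this common value is $\HYM(\vec{\mu}_0)$ as desired. In particular, $\HYM(D_{t_j})\to\HYM(\vec{\mu}_0)$ for the given sequence $t_j\to\infty$.

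With all three hypotheses of Lem.~\ref{lem-dw-3} verified for $D_{t_j}$, the conclusions (i), (ii), (iii) follow directly: a subsequence converges to some $D_\infty$ on $E_\infty$ with $(E_\infty,D_\infty'')\cong\Gr^{\hns}(E,\bar{\partial})^{**}$, with $W^{2,p}_{\loc}$-isometries outside a finite bubbling set $Z^{\an}$, and with $D_j\to D_\infty$ in $L^2_{\loc}$ away from $Z^{\an}$. The proof of the final theorem is therefore a short assembly; the substantive obstacle was pushed into the earlier sections, specifically establishing Prop.~\ref{prop-bound-f} and Prop.~\ref{prop-uhlenbeck-limit-type} in the non-K\"ahler setting, where the torsion of the Gauduchon metric must be absorbed via Lem.~\ref{lem-surface-torsion} and the Gauduchon perpendicularity identity \eqref{eq-gauduchon-perpendicularity}.
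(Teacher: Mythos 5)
Your proposal is correct and follows essentially the same route as the paper: reduce to Lem.~\ref{lem-dw-3}, verify the curvature bounds via Lem.~\ref{lem-lambda-f-lp} and Prop.~\ref{prop-bound-f}, and establish $\lim_{t\to\infty}\HYM(D_t)=\HYM(\vec{\mu}_0)$ by taking an Uhlenbeck limit along a minimizing sequence and combining Prop.~\ref{prop-limit-other-hym} with Prop.~\ref{prop-uhlenbeck-limit-type}. The only difference is that you spell out a few steps (the gauge-orbit verification and the pointwise computation of $\HYM(D_\infty)$ from the Hermitian-Einstein splitting) that the paper leaves implicit.
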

\begin{proof}
We first take a minimizing sequence $\tilde{t}_j\rightarrow\infty$, and obtain an Uhlenbeck limit $\tilde{D}_{\infty}$ on some Hermitian vector bundle $\tilde{E}_{\infty}$. Since $i\Lambda F_{\tilde{D}_{\infty}}$ has constant eigenvalues given by the $r$-tuple $\mathrm{HN}(\tilde{E}_{\infty},\tilde{D}_{\infty}'')$, Prop. \ref{prop-uhlenbeck-limit-type} implies $\HYM(\tilde{D}_{\infty})=\HYM(\vec{\mu}_0)$, where $\vec{\mu}_0=\mathrm{HN}(E,\bar{\partial})$. Apply Prop. \ref{prop-limit-other-hym} for $\alpha=2$ and $N=0$, and we obtain $\lim_{t\rightarrow{\infty}}\HYM(D_t)=\HYM(\tilde{D}_{\infty})$. So $\lim_{t\rightarrow\infty}\HYM(D_t)=\HYM(\vec{\mu}_0)$. The theorem then follows from Lem. \ref{lem-dw-3}, applied to any given sequence $t_j\rightarrow\infty$.
\end{proof}

\medskip

\section{Appendix}
\subsection*{Follow-up on Rmk. \ref{rem-surface-torsion}}
In this section, we will first prove
\begin{prop}
\label{prop-general-torsion}
Let $(X,g)$ be an Hermitian manifold with fundamental 1-form $\omega$. Then for any $\xi\in\Lambda^2X$, there holds
\begin{equation}
\label{eq-general-torsion}
(\tau+\bar{\tau})^*\xi=-\frac{*\left(d(\omega^{n-2})\wedge\mathbf{I}(\xi_2)\right)}{(n-2)!}+\frac{2*\left(d(\omega^{n-1})\cdot\xi_0\right)}{(n-1)!}
\end{equation}
where $\xi=\xi_2+L\xi_0$ is the Lefschetz decomposition of $\xi$, and $\mathbf{I}=\sum_{p,q}i^{p-q}\Pi_{p,q}$ is the Weil operator.
\end{prop}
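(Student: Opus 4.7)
The strategy is to reduce to a dimension-independent identity. Inspection of the proof of Lemma 1.3 shows that the derivation of
\begin{equation*}
(\tau+\bar{\tau})^*\xi = *(d\omega \wedge *L\xi)
\end{equation*}
(cf.\ (1.7)) uses only the identity $\tau + \bar{\tau} = [\Lambda, d\omega \wedge]$ and does not depend on $\dim X = 2$. So this identity is available on any Hermitian manifold, and the task reduces to evaluating $*L\xi$ in terms of the Lefschetz decomposition of $\xi$.

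First, I would decompose $\xi = \xi_2 + L\xi_0$ with $\xi_2$ primitive (so $\Lambda\xi_2=0$) and $\xi_0 \in \Lambda^0 X$, giving $L\xi = L\xi_2 + L^2\xi_0$. Then I would apply the Weil identity for the Hodge star on primitive forms: if $\eta$ is a primitive $(p,q)$-form of total degree $k$, then
\begin{equation*}
*(L^j\eta) = (-1)^{k(k+1)/2}\,\frac{j!}{(n-k-j)!}\,L^{n-k-j}\mathbf{I}(\eta).
\end{equation*}
Applied with $(k,j)=(2,1)$ to $\xi_2$ this gives $*L\xi_2 = -\tfrac{1}{(n-3)!}\,\omega^{n-3}\wedge \mathbf{I}(\xi_2)$; applied with $(k,j)=(0,2)$ to $\xi_0$ (and $\mathbf{I}(\xi_0)=\xi_0$ since $\xi_0$ is of pure type $(0,0)$) this gives $*L^2\xi_0 = \tfrac{2}{(n-2)!}\,\omega^{n-2}\,\xi_0$.

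Next I would wedge with $d\omega$ and use the Leibniz rule $d(\omega^m) = m\,\omega^{m-1}\wedge d\omega$ to convert $d\omega\wedge\omega^{n-3} = \tfrac{1}{n-2}\,d(\omega^{n-2})$ and $d\omega\wedge\omega^{n-2} = \tfrac{1}{n-1}\,d(\omega^{n-1})$. Collecting the factorial prefactors and applying $*$ one last time yields the two terms on the right-hand side of (4.1), with precisely the stated coefficients $-\tfrac{1}{(n-2)!}$ and $\tfrac{2}{(n-1)!}$. As a sanity check, when $n=2$ the primitive contribution drops out (because $d(\omega^{0}) = 0$) and the second term reduces to $2\xi_0\cdot *(d\omega)$, which agrees with Lemma 1.3 via $\Lambda\xi = 2\xi_0$ and $*d\omega = -d^*\omega$ on a surface where $*\omega=\omega$.

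The main obstacle is purely bookkeeping: the sign $(-1)^{k(k+1)/2}$ and the factorials in the Weil identity must combine correctly with the Leibniz rule and the factor of $(n-2)$ or $(n-1)$ absorbed when rewriting in terms of $d(\omega^{n-2})$ and $d(\omega^{n-1})$. Once one fixes a convention for $\mathbf{I}$ (some sources absorb an extra $(-1)^k$), the calculation is mechanical. There is no new geometric content beyond Lemma 1.3, and in particular no use of the Gauduchon condition is required for this identity.
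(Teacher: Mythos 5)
Your proposal is correct and follows essentially the same route as the paper: both start from the dimension-independent identity $(\tau+\bar{\tau})^*\xi=*(d\omega\wedge *L\xi)$ extracted from the proof of Lemma \ref{lem-surface-torsion}, apply the Weil/Hodge-star identity \eqref{eq-huybrechts} to the two pieces of the Lefschetz decomposition, and convert $d\omega\wedge\omega^{m-1}$ into $\tfrac{1}{m}d(\omega^m)$. The only cosmetic difference is that the paper treats $n=2$ separately via Lemma \ref{lem-surface-torsion} and assumes $n\ge 3$ for the main computation, whereas you fold $n=2$ in as a consistency check.
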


\noindent
To prove Prop. \ref{prop-general-torsion}, we need a standard result:
\begin{lem}
For any primitive form $\alpha\in\Lambda^kX$, one has
\begin{equation}
\label{eq-huybrechts}
*L^j\alpha=(-1)^{\frac{k(k+1)}{2}}\frac{j!}{(n-k-j)!}\cdot L^{n-k-j}\mathbf{I}(\alpha)
\end{equation}
\end{lem}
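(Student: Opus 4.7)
Since \eqref{eq-huybrechts} contains no derivatives, it is a pointwise statement at each $x \in X$, and the plan is to reduce it to linear algebra on the Hermitian complex vector space $V = T_x X$. By $\mathbb{C}$-linearity and the fact that $\mathbf{I}$ acts by $i^{p-q}$ on $\Lambda^{p,q}V^*$, it is enough to verify the formula for $\alpha$ primitive of pure type $(p,q)$ with $p+q=k$, in which case it takes the form
$$
*L^j\alpha = (-1)^{k(k+1)/2}\, i^{p-q}\,\frac{j!}{(n-k-j)!}\, L^{n-k-j}\alpha.
$$

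I would organize the argument in two steps. First, a Schur-type reduction to determine that the two sides agree up to a universal scalar. The degrees and Hodge types match: $L^j \alpha$ has type $(p+j,q+j)$, so $*L^j\alpha$ has type $(n-q-j,n-p-j)$, the same as $L^{n-k-j}\alpha$. The Lefschetz decomposition of $\Lambda^{n-q-j,n-p-j}V^*$ combined with the well-known irreducibility of each primitive piece $P^{p,q} \subset \Lambda^{p,q}V^*$ as a $U(n)$-representation identifies $L^{n-k-j}P^{p,q}$ as the entire $P^{p,q}$-isotypic subspace in that degree. Since $*$ and $L$ are both $U(n)$-equivariant, the two operations $\alpha \mapsto *L^j\alpha$ and $\alpha \mapsto L^{n-k-j}\alpha$ are equivariant maps $P^{p,q} \to L^{n-k-j}P^{p,q}$, and Schur's lemma yields a scalar $c = c(p,q,j,n)$ with $*L^j\alpha = c\,L^{n-k-j}\alpha$ for all $\alpha \in P^{p,q}$.

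Second, I would pin down $c$ by evaluating both sides on an explicit primitive form. Choose a unitary coframe $\{\theta^l\}$ so that $\omega = i\sum_l \theta^l \wedge \bar{\theta}^l$, and take
$$
\alpha = \theta^1 \wedge \cdots \wedge \theta^p \wedge \bar{\theta}^{p+1} \wedge \cdots \wedge \bar{\theta}^{p+q},
$$
which is primitive since its holomorphic and antiholomorphic index sets are disjoint (so $\iota(\bar{e}_l)\iota(e_l)\alpha=0$ for every $l$). Multinomially expanding $\omega^j$ and $\omega^{n-k-j}$, only the monomials $\bigwedge_{l \in S}(\theta^l \wedge \bar{\theta}^l)$ with $S \subseteq \{p+q+1,\ldots,n\}$ contribute, and the formula for $*$ on such ordered monomials relative to the real orientation $(i/2)^n\prod_l\theta^l \wedge \bar{\theta}^l$ is explicit. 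The identity then reduces to counting subsets $S$ of the prescribed size and reading off a single overall sign.

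The main obstacle will be the careful tracking of the sign $(-1)^{k(k+1)/2}$. It arises from permuting the factors $\theta^i$ and $\bar{\theta}^i$ appearing in $\alpha$ past the $\theta^l \wedge \bar{\theta}^l$ pairs used to fill out the volume form, combined with isolating the $(1,0)$-piece from the $(0,1)$-piece of $\alpha$ before applying the monomial formula for $*$. The permutation sign depends only on $k$ (and auxiliary parities that cancel), and a direct bookkeeping collapses it to $(-1)^{k(k+1)/2}$; the remaining combinatorial factor $j!/(n-k-j)!$ is precisely the multinomial count of subsets $S$ described above. This sign accounting is the only delicate step in the computation.
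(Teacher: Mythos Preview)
Your proposal outlines a genuine proof, whereas the paper does not prove the lemma at all: it simply cites \cite[Prop.~1.2.31]{Huy} and moves on. So there is no approach in the paper to compare against beyond ``look it up in Huybrechts.''

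That said, your outline is essentially the standard textbook argument for this classical Weil--Lefschetz formula, and it is correct in structure. The Schur-lemma reduction using the irreducibility of $P^{p,q}$ under $U(n)$ is a clean way to isolate the scalar, and the explicit test form $\theta^1\wedge\cdots\wedge\theta^p\wedge\bar\theta^{p+1}\wedge\cdots\wedge\bar\theta^{p+q}$ is the right choice for pinning it down. You are also right that the only genuinely delicate point is the sign bookkeeping; if you carry it out, be careful to fix the convention for the volume form and for the Hermitian pairing on $(p,q)$-forms consistently with the paper's, since different sources normalize $*$ on complex forms differently (some build the conjugation into $*$, some do not), and this affects which of the two common sign variants you recover. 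With that caveat, your plan would yield a complete proof where the paper offers only a citation.
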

\begin{proof}
See \cite[Prop. 1.2.31]{Huy} for example.
\end{proof}

\begin{proof}[Proof of Prop. \ref{prop-general-torsion}]
As in the proof of Lem. \ref{lem-surface-torsion}, we have $(\tau+\bar\tau)^*\xi=*(d\omega\wedge *L\xi)$. The case $n=2$ is exactly given by Lem. \ref{lem-surface-torsion}, so we assume $n\ge 3$ in what follows.

Consider the Lefschetz decomposition $\xi=\xi_2+L\xi_0$. We may then compute using \eqref{eq-huybrechts}:
\begin{align*}
(\tau+\bar{\tau})^*\xi=&*\left(d\omega\wedge *L\xi_2\right)+*\left(d\omega\wedge *L^2\xi_0\right)\\
=&-\frac{*\left(d\omega\wedge\omega^{n-3}\wedge\mathbf{I}(\xi_2)\right)}{(n-3)!}+\frac{2*\left(d\omega\wedge \omega^{n-2}\cdot\xi_0\right)}{(n-2)!}
\end{align*}
since $\mathbf{I}(\xi_0)=\xi_0$. The desired identity \eqref{eq-general-torsion} then follows from $d(\omega^{n-1})=(n-1)d\omega\wedge\omega^{n-2}$, and $d(\omega^{n-2})=(n-2)d\omega\wedge\omega^{n-3}$.
\end{proof}

\begin{cor}
Suppose $n\ge 3$ and $d(\omega^{n-2})=0$. Then $(\tau+\bar{\tau})^*\xi=0$ for all $\xi\in\Lambda^2X$.
\end{cor}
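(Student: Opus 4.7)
The plan is to apply Prop.~\ref{prop-general-torsion} directly and then verify that both terms on the right-hand side of \eqref{eq-general-torsion} vanish under the hypothesis $d(\omega^{n-2})=0$.

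First, the term $-\frac{*(d(\omega^{n-2})\wedge\mathbf{I}(\xi_2))}{(n-2)!}$ vanishes immediately, since $d(\omega^{n-2})=0$ by assumption. So everything reduces to showing that $d(\omega^{n-1})=0$ as well, for then the second term $\frac{2*(d(\omega^{n-1})\cdot\xi_0)}{(n-1)!}$ vanishes too and the corollary follows.

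For this, I would use the product rule: $d(\omega^{n-2})=(n-2)\,\omega^{n-3}\wedge d\omega$, so the hypothesis gives $\omega^{n-3}\wedge d\omega=0$ (this makes sense as long as $n\ge 3$, and when $n=3$ it just reads $d\omega=0$, so $d(\omega^{n-1})=2\omega\wedge d\omega=0$ trivially). Wedging $\omega^{n-3}\wedge d\omega=0$ with $\omega$ then yields $\omega^{n-2}\wedge d\omega=0$, i.e.
\begin{equation*}
d(\omega^{n-1})=(n-1)\,\omega^{n-2}\wedge d\omega=0.
\end{equation*}
Substituting into \eqref{eq-general-torsion} gives $(\tau+\bar\tau)^*\xi=0$ for every $\xi\in\Lambda^2X$.

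The only potential subtlety is the edge case $n=3$ (where $\omega^{n-3}=1$ and the intermediate identity is degenerate), but as noted above this case is handled by the direct observation that $d(\omega^{n-2})=d\omega=0$ forces $d(\omega^{n-1})=0$. There is no real obstacle here; the content of the corollary is entirely contained in Prop.~\ref{prop-general-torsion}, and the proof is a one-line chase once one observes that closedness of $\omega^{n-2}$ propagates up to closedness of $\omega^{n-1}$ by wedging with $\omega$.
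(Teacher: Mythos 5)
Your proof is correct and follows essentially the same route as the paper: apply Proposition \ref{prop-general-torsion} and reduce everything to showing that $d(\omega^{n-2})=0$ forces $d(\omega^{n-1})=0$. The only (immaterial) difference is in how that implication is derived — you expand $d(\omega^{n-2})=(n-2)\,\omega^{n-3}\wedge d\omega$ and wedge with $\omega$, whereas the paper compares the two Leibniz expansions $d(\omega^{n-1})=(n-1)\,d\omega\wedge\omega^{n-2}$ and $d(\omega^{n-1})=d\omega\wedge\omega^{n-2}+\omega\wedge d(\omega^{n-2})$; both are one-line computations and your handling of the $n=3$ case is fine.
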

\begin{proof}
Observe that
\begin{equation}
\label{eq-n-2}
d(\omega^{n-2})=0\implies d(\omega^{n-1})=0,\quad\text{for}\quad n\ge 3
\end{equation}
Indeed, this follows from $d(\omega^{n-1})=(n-1)d\omega\wedge\omega^{n-2}$ and $d(\omega^{n-1})=d\omega\wedge\omega^{n-2}+\omega\wedge d(\omega^{n-2})$. The corollary now follows from \eqref{eq-general-torsion} and \eqref{eq-n-2}.
\end{proof}

\medskip

\subsection*{Follow-up on Rmk. \ref{rem-defn-uhlenbeck-limit}}
Note that the condition
\begin{enumerate}[(i)]
	\item[(iii${}^*$)] for any compact set $K\subset\subset X-Z^{\an}$, a $W^{2,p}$-isometry $\tau^K:(E_{\infty},H_{\infty})|_K\rightarrow(E,H)|_K$ such that for $K\subset K'\subset\subset X-Z^{\an}$, $\tau^K=\tau^{K'}|_K$, and $\tau^K(D_{j_k})\rightharpoonup D_{\infty}$ weakly in $W^{1,p}(K)$.
\end{enumerate}
is equivalent to the existence of some $\tau\in\Aut^{2,p}_{\loc}(E|_{X-Z^{\an}},H)$ such that $\tau(D_{j_k})\rightharpoonup D_{\infty}$ weakly in $W^{1,p}_{\loc}$. To show that (ii)(iii) imply (ii${}^*$)(iii${}^*$) possibly after passing to a subsequence, note first that $X-Z^{\an}=\bigcup_{l\in\mathbb{N}}X_l$ is exhausted by countably many compact submanifolds $X_l$. Let $\tau_{k}^{(0)}$ denote the original sequence $\tau_{j_k}$. Hence it suffices to find, for each $X_l$, a subsequence of $\tau_{k}^{(l-1)}$ denoted by $\tau_{k}^{(l)}$, such that
\begin{enumerate}[(i)]
	\item $\tau_k^{(l)}\rightharpoonup\tau^{(l)}$ weakly in $W^{2,p}$, for some $\tau^{(l)}\in\Aut^{2,p}(E|_{X_l},H)$, and
	
	\item $\tau^{(l)}(D_k)\rightharpoonup D_{\infty}$ weakly in $W^{1,p}$, as connections on $E|_{X_l}$,
\end{enumerate}
and then take the diagonal sequence over $l$. This subsequence can be found using

\begin{lem}
Suppose $(X,g)$ is a compact Hermitian manifold of dimension $n$, and $(E,H)$ an Hermitian vector bundle over $X$. Suppose $D_k\in\mathcal{A}^{1,p}(E,H)$ and $\tau_k\in\Aut^{2,p}(E,H)$, where $p>\frac{n}{2}$, such that
\begin{enumerate}[(i)]
	\item $\{D_k\}$ is uniformly bounded in $W^{1,p}$, and
	
	\item $\tau_k(D_k)\rightharpoonup D_{\infty}$ weakly in $W^{1,p}$ for some $D_{\infty}\in\mathcal{A}^{1,p}(E,H)$.
\end{enumerate}
Then after passing to a subsequence, $\tau_k\rightharpoonup\tau$ weakly in $W^{2,p}$ for some $\tau\in\Aut^{2,p}(E,H)$ and $\tau(D_k)\rightharpoonup D_{\infty}$ weakly in $W^{1,p}$.
\end{lem}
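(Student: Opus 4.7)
The plan is to secure a uniform $W^{2,p}$ bound on the sequence $\tau_k$ by bootstrapping the first-order intertwining relation between $\tau_k$, $D_k$, and $\tau_k(D_k)$, and then to pass to a weak limit using reflexivity together with the Rellich--Kondrachov theorem. Once this is done, verifying $\tau(D_k)\rightharpoonup D_\infty$ amounts to exploiting the composition rule for the gauge action.

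Let $A_k$ and $\tilde{A}_k$ denote the local connection one-forms of $D_k$ and $\tau_k(D_k)$ respectively. Unwinding the definition of the gauge action gives the intertwining identity
\begin{equation*}
d\tau_k = \tau_k A_k - \tilde{A}_k \tau_k.
\end{equation*}
By hypothesis, both $A_k$ and $\tilde{A}_k$ are uniformly bounded in $W^{1,p}$, while $\|\tau_k\|_{L^{\infty}}$ is uniformly bounded since each $\tau_k$ takes pointwise values in $U(r)$. By the Sobolev embedding $W^{1,p}\hookrightarrow L^{p^\ast}$ (with $p^\ast = np/(n-p)$ when $p<n$, interpreted appropriately otherwise), the right-hand side sits in $L^{p^\ast}$ uniformly, and so $\tau_k$ is uniformly bounded in $W^{1,p^\ast}$. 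Differentiating the identity once more,
\begin{equation*}
\nabla(d\tau_k) = (\nabla\tau_k)A_k + \tau_k \nabla A_k - (\nabla\tilde{A}_k)\tau_k - \tilde{A}_k \nabla\tau_k,
\end{equation*}
the linear terms $\tau_k\nabla A_k$ and $(\nabla\tilde{A}_k)\tau_k$ are in $L^{\infty}\cdot L^p = L^p$ uniformly, while each quadratic term such as $(\nabla\tau_k)A_k$ has both factors in $L^{p^\ast}$ and hence belongs to $L^{p^\ast/2}$ by H\"{o}lder. The hypothesis $p>n/2$ is precisely the condition $p^\ast/2 \ge p$, which brings the quadratic terms into $L^p$. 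Altogether, $\tau_k$ is uniformly bounded in $W^{2,p}$.

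By reflexivity and Rellich--Kondrachov, after passing to a subsequence, $\tau_k\rightharpoonup\tau$ weakly in $W^{2,p}$, strongly in $W^{1,p}$, and uniformly (the compact embedding $W^{2,p}\hookrightarrow C^0$ being available since $p>n/2$). Uniform convergence preserves the pointwise identity $\tau_k\tau_k^\ast = \mathbb{I}$, so $\tau\in\Aut^{2,p}(E,H)$. To verify $\tau(D_k)\rightharpoonup D_\infty$, I invoke the composition rule $\tau(D_k)=\sigma_k(\tau_k(D_k))$ with $\sigma_k := \tau\tau_k^{-1}$. Since $\sigma_k\to\mathbb{I}$ uniformly and strongly in $W^{1,p}$, while $\tau_k(D_k)\rightharpoonup D_\infty$ by hypothesis, the connection form $\sigma_k\tilde{A}_k\sigma_k^{-1} - (d\sigma_k)\sigma_k^{-1}$ of $\tau(D_k)$ converges strongly in $L^p$ to the connection form of $D_\infty$. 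Meanwhile, $\tau(D_k)$ remains uniformly bounded in $W^{1,p}$ by the continuity of the gauge action on $\Aut^{2,p}\times\mathcal{A}^{1,p}$ (a standard Sobolev multiplication, valid for $p>n/2$), so weak $W^{1,p}$ convergence to $D_\infty$ follows by uniqueness of the weak limit.

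The main technical obstacle is the two-step bootstrap to secure the uniform $W^{2,p}$-bound on $\tau_k$; the critical consequence of $p>n/2$ is the Sobolev gain $p^\ast/2 \ge p$, which absorbs the quadratic products $\nabla\tau_k\cdot A_k$ into $L^p$. With this bound in hand, weak compactness and the continuity of the gauge action handle the rest.
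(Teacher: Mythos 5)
Your argument is correct and follows essentially the same route as the paper's: both obtain the uniform $W^{2,p}$-bound on $\tau_k$ by the same two-step bootstrap of the gauge relation (first $d\tau_k$ bounded in $L^{p^*}$ using only the $L^\infty$-bound on unitary $\tau_k$, then $d\tau_k$ bounded in $W^{1,p}$, with $p>n/2$ entering exactly through the Sobolev product $L^{p^*}\cdot L^{p^*}\subset L^p$), and then identify the weak limit of $\tau(D_k)$ by downgrading to a topology in which the relevant convergences become strong. The only cosmetic differences are that the paper delegates the passage from a $W^{1,p}$-bound on $d\tau_k\,\tau_k^{-1}$ to a $W^{2,p}$-bound on $\tau_k$ to Wehrheim's Lemma B.5, and identifies the limit by working in $W^{1,p'}$ for $n/2<p'<p$ rather than via your $\sigma_k=\tau\tau_k^{-1}\rightarrow\mathbb{I}$ device combined with uniqueness of weak limits.
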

\begin{proof}
We may cover $X$ by finitely many bundle charts, and work over one such chart $U$, where we may write $D_k=d+A_k$ and $D_{\infty}=d+A_{\infty}$. Note that for each $\tau\in\Aut^{2,p}(E|_U,H)$ and a $W^{1,p}$-1-form $A$ over $U$, there holds
\begin{equation}
\label{eq-bound-gauge-transform}
\|\tau A\tau^{-1}\|_{W^{1,p}}\le C\|A\|_{W^{1,p}}(1+\|d\tau(\tau^{-1})\|_{L^{2p}})
\end{equation}
by the Sobolev embedding $W^{1,p}\hookrightarrow L^{2p}$. Therefore, the expression
$$
\tau_k(A_k)=\tau_kA_k\tau_k^{-1}-d\tau_k(\tau_k^{-1})
$$
implies that $\|d\tau_k(\tau_k^{-1})\|_{L^{2p}}$ is uniformly bounded, and thus so is
$$
\|d\tau_k(\tau_k)^{-1}\|_{W^{1,p}}\le\|\tau_k(A_k)\|_{W^{1,p}}+\|\tau_kA_k\tau_k^{-1}\|_{W^{1,p}}
$$
by the hypothesis (ii) and \eqref{eq-bound-gauge-transform}. Using the estimate in \cite[Lem. B.5]{Weh}, the bound on $\|d\tau_k(\tau_k)^{-1}\|_{W^{1,p}}$ implies that $\tau_k$ is uniformly bounded in $W^{2,p}$. Let $\tau$ be a weak $W^{2,p}$-limit of $\tau_k$ along some subsequence. We now write
$$
\tau(A_k)=\tau A_k\tau^{-1}-d\tau(\tau^{-1})
$$
and another application of \eqref{eq-bound-gauge-transform} implies that $\|\tau A_k\tau^{-1}\|_{W^{1,p}}$ is uniformly bounded, and thus $\tau(A_k)$ has a weak limit in $W^{1,p}$ along some further subsequence, say $\tau(A_k)\rightharpoonup A_{\infty}'$ weakly in $W^{1,p}$. Choose $p'$ with $\frac{n}{2}<p'<p$. Then $\tau(A_k)\rightarrow A_{\infty}'$, $\tau_k(A_k)\rightarrow A_{\infty}$ strongly in $W^{1,p'}$, and $\tau_k\rightarrow\tau$ strongly in $W^{2,p'}$. By continuity of composition, inversion of $W^{2,p'}$-gauge transformations, and their action on $W^{1,p'}$-connections (cf. \cite[Lem. A.5, A.6]{Weh}),
$$
\tau(A_k)=(\tau\circ\tau_k^{-1})\left(\tau_k(A_k)\right)\rightarrow A_{\infty}\quad\text{strongly in $W^{1,p'}$}
$$
Therefore $A_{\infty}'=A_{\infty}$ as $W^{1,p}$-connections.
\end{proof}

\bigskip

\end{document}